\documentclass[10pt]{article}
\usepackage{lmodern}
\usepackage{amsmath}
\usepackage[T1]{fontenc}
\usepackage[utf8]{inputenc}
\usepackage{authblk}
\usepackage{amsfonts}
\usepackage{graphicx}
\usepackage{rotating}
\usepackage{amssymb}
\usepackage[english]{babel}
\usepackage{color}
\usepackage{amsthm}
\usepackage{graphicx}
\usepackage{mathrsfs}
\usepackage{makecell}
\usepackage{microtype}
\usepackage{enumerate}
\usepackage{mathscinet}
\usepackage{array}
\usepackage{multirow}
\usepackage{booktabs}
\usepackage{enumerate}
\usepackage[cal=boondoxo,bb=ams]{mathalfa}
\usepackage{hyperref}
\hypersetup{hidelinks}
\usepackage{titlesec}
\pagestyle{myheadings}

\newtheorem{defn}{Definition}[section]

\newtheorem{theorem}{Theorem}[section]
\newtheorem{prop}{Proposition}[section]
\newtheorem{lemma}{Lemma}[section]

\newtheorem{remark}{Remark}[section]
\newtheorem{exam}{Example}[section]

\newcommand{\ml}{\mathcal}
\newcommand{\mb}{\mathbb}

\DeclareMathOperator{\non}{non}
\DeclareMathOperator{\lin}{lin}
\DeclareMathOperator{\intt}{int}
\DeclareMathOperator{\extt}{ext}
\DeclareMathOperator{\midd}{mid}

\title{The Cauchy problem for the Moore-Gibson-Thompson equation in the dissipative case}
\author[1]{Wenhui Chen\thanks{Corresponding author: Wenhui Chen (wenhui.chen.math@gmail.com)}}
\author[2]{Ryo Ikehata}
\affil[1]{School of Mathematical Sciences, Shanghai Jiao Tong University, 200240 Shanghai, China}
\affil[2]{Department of Mathematics, Division of Educational Sciences, Graduate School of Humanities and Social Sciences, Hiroshima University, 739-8524 Higashi-Hiroshima, Japan}

\setlength{\topmargin}{-10mm}
\setlength{\textwidth}{7in}
\setlength{\oddsidemargin}{-8mm}
\setlength{\textheight}{9in}
\setlength{\footskip}{1in}
\date{}
\begin{document}

\maketitle
\begin{abstract}
In this paper, we study the Cauchy problem for the linear and semilinear Moore-Gibson-Thompson (MGT) equation in the dissipative case. Concerning the linear MGT model, by utilizing WKB analysis associated with Fourier analysis, we derive some $L^2$ estimates of solutions, which improve those in the previous research \cite{PellicerSaiHouari2017}. Furthermore, asymptotic profiles of the solution and an approximate relation in a framework of the weighted $L^1$ space are derived. Next, with the aid of the classical energy method and Hardy's inequality, we get singular limit results for an energy and the solution itself. Concerning the semilinear MGT model, basing on the obtained sharp $L^2$ estimates and constructing time-weighted Sobolev spaces, we investigate global (in time) existence of Sobolev solutions with different regularities. Finally, under a sign assumption on initial data, nonexistence of global (in time) weak solutions is proved by applying a test function method.\\
	
	\noindent\textbf{Keywords:} Moore-Gibson-Thompson equation, third-order hyperbolic equation, Fourier analysis, asymptotic profiles, singular limit, global existence.\\
	
	\noindent\textbf{AMS Classification (2010)} Primary:  35L30,  35L76; 35B40; Secondary: 74D05, 34K26, 35A01, 35B44.
\end{abstract}
\fontsize{12}{15}
\selectfont

\section{Introduction} 
In the last two decades, researches of the Moore-Gibson-Thompson (MGT) equation, which is linearized by a model for the wave propagation in viscous thermally relaxing fluids and is widely applied in medical as well as industrial uses of high-intensity ultrasound e.g. lithotripsy, thermotherapy or ultrasound cleaning, have caught a lot of attention. The MGT model is considered through the third-order (in time) strictly hyperbolic partial differential equation as follows: 
\begin{equation}\label{General.MGT.Equation}
\tau u_{ttt}+ u_{tt}-c^2\Delta u-b\Delta u_t=0,
\end{equation} 
where the scalar unknown $u=u(t,x)\in\mb{R}$ denotes an acoustic velocity. The MGT model \eqref{General.MGT.Equation} exhibits a variety of dynamical behaviors for solutions, which heavily depend on the physical parameters in the equation. To be specific, concerning the model \eqref{General.MGT.Equation}, $c$ stands for the speed of sound and $\tau$ denotes the thermal relaxation in the view of the physical context of acoustic waves. Moreover, the parameter $b=\beta c^2$ concerns the diffusivity of the sound carrying $\tau\in (0,\beta]$.

Actually, one may distinguish behaviors of solutions to the model \eqref{General.MGT.Equation} according to the dissipative case when $\tau\in(0,\beta)$ and the conservative case when $\tau=\beta$. Precisely, in the case of bounded domains for the linear MGT model, there exists a transition from the case $\tau\in(0,\beta)$ with an energy being exponentially stable to the limit case $\tau=\beta$ with an energy being conserved. Concerning some studies for the linear or nonlinear MGT equations, we refer interested readers to the related works  \cite{MooreGibson1960,Thompson1972,Kala-Tiry-1997,Gorain2010,KaltenbacherLasieckaMarchand2011,KaltenbacherLasiecka2012,MarchandMcDevittTriggiani2012,Jordan2014,LasieckaWang2015,PellicerSola2015,CaixetaLasieckaDominos2016,DellOroLasieckaPata2016,LasieckaWang2016,DellOroPata2017,Lasiecka2017,PellicerSaiHouari2017,BucciLasi,AlvesCaixetaSilvaRodrigues2018,DellOroLasieckaPata2019,BucciPandolfi,Racke-Said2019,PellicerSaid2019,Quintanilla2019,ChenPalmieri201901,ChenPalmieri201902,BucciEller2020,Nikoic-Said2020,Nikoic-Said202002} and references therein.

It is well-known that to study qualitative properties of solutions to the linear problem is not only significant for us to understand some underlying physical phenomena, it is also the crucial point for proving existence results of solutions to its corresponding nonlinear models. Let us come to the Cauchy problem for the linear MGT equation which has been firstly studied by the recent paper \cite{PellicerSaiHouari2017}. By reducing the third-order (in time) equation to the first-order (in time) coupled system, the authors of \cite{PellicerSaiHouari2017} employed energy methods in the Fourier space combined with suitable Lyapunov functionals to derive some energy estimates, and eigenvalues expansions to investigate some estimates for the solution itself. However, the obtained estimates for solutions in \cite{PellicerSaiHouari2017} seem not sharp, especially, in some low-dimensional cases. In this paper, we will improve their results and derive some optimal estimates. What's more, in the view of the limit case $\tau=0$, the linear MGT equation formally turns out to be the viscoelastic damped wave equation. For this reason, one may conjecture that there exist some relations between them. We will answer this conjecture from two points of view which are  singular limits and approximate relation in the sense of diffusion phenomena, respectively.

Our first aim in this paper is to investigate qualitative properties of solutions to the following linear MGT equation in the dissipative case:
\begin{align}\label{Linear_MGT_Dissipative}
\begin{cases}
\tau u_{ttt}+u_{tt}-\Delta u-\beta\Delta u_t=0,&x\in\mb{R}^n,\ t>0,\\
u(0,x)=0,\ u_t(0,x)=0,\ u_{tt}(0,x)=u_2(x),&x\in\mb{R}^n,
\end{cases}
\end{align}
where $\tau\in(0,\beta)$ and $n\geqslant 1$. Without loss of generality, we set the speed of the sound by $c^2=1$ in the last equation. To be specific, in Section \ref{Section_Linear_MGT} by preparing representation of solutions in the Fourier space and using asymptotic expansions of eigenvalues as well as WKB analysis, we deduce some $L^2$ estimates of solutions to the Cauchy problem \eqref{Linear_MGT_Dissipative} for initial data taken from $L^2$ space with or without additional $L^m$ regularity carrying $m\in[1,2)$. By a different treatment of some singularities, our results of $L^2$ estimates improve those in \cite{PellicerSaiHouari2017}, especially, the estimates of solutions in one and two spatial dimensions. Moreover, the regular assumption on initial data is relaxed. Later in Section \ref{Section_Asymptotic_Profile} we obtain asymptotic profiles of the solution to the Cauchy problem \eqref{Linear_MGT_Dissipative} in a framework of weighted $L^1$ data, where we provide sharp estimates for lower bounds and upper bounds of the solution itself in the $L^2$ norm. Namely, in the consideration of $L^2$ data with additional weighted $L^1$ regularity, the derived estimates are optimal for any $n\geqslant 1$. In Subsection \ref{Subsec_Approx_Relation}, in the frame of $L^2$ space, we describe an approximate relation (strongly related to diffusion phenomena) between the linear MGT equation and the linear viscoelastic damped wave equation, where gained decay rates are obtained for one- and two-dimensional cases. Next, in Section \ref{Sec_Singu_Limit} we consider the singular limit problem, in which we find the solution of the linear MGT equation converges to the solution of the linear viscoelastic damped wave equation as the thermal relaxation tending to $0$, i.e. $\tau\to 0^+$. Particularly, under different assumptions for initial data, we observe different rates of such tendency with respect to $\tau$. 

Our next purpose is to consider the Cauchy problem for the semilinear MGT equation in the dissipative case with the nonlinearity of power type, namely,
\begin{align}\label{Semi_Linear_MGT_Dissipative}
\begin{cases}
\tau u_{ttt}+u_{tt}-\Delta u-\beta\Delta u_t=|u|^p,&x\in\mb{R}^n,\ t>0,\\
u(0,x)=0,\ u_t(0,x)=0,\ u_{tt}(0,x)=u_2(x),&x\in\mb{R}^n,
\end{cases}
\end{align}
where $\tau\in(0,\beta)$, $n\geqslant 1$ and $p>1$. Recently, the blow-up results of the Cauchy problem for the semilinear MGT equation in the conservative case, i.e. the limit case $\tau=\beta$, with the nonlinearity of power type $|u|^p$ in \cite{ChenPalmieri201901}, or of derivative type $|u_t|^p$ in \cite{ChenPalmieri201902} have been obtained by applying iteration methods with suitable slicing procedure for unbounded multipliers. These works interpret the semilinear MGT equation in the conservative case as the semilinear wave equation with power source nonlinearities. Nevertheless, this statement does not hold anymore for the MGT equation in the dissipative case due to the damping effect that we derived in the corresponding linear problem. For this reason, it seems interesting to study existence as well as nonexistence of global (in time) solutions to the semilinear MGT models in the dissipative case.

Let us now turn to the Cauchy problem \eqref{Semi_Linear_MGT_Dissipative}. To the best of authors' knowledge, not only global (in time) existence but also blow-up results for \eqref{Semi_Linear_MGT_Dissipative} are still open. We will answer these questions in the present paper. By making use of the improved $L^2-L^2$ estimates with an additional $L^1$ regularity and employing Banach's fixed point theory, we prove global (in time) existence of small data Sobolev solutions to the Cauchy problem \eqref{Semi_Linear_MGT_Dissipative} in Section \ref{Section_Global_Existence}. Particularly, we analyze the interplay effect between dimension $n$, regularity $s$ and power $p$ on the existence of global (in time) Sobolev solution such that
\begin{align*}
u\in\ml{C}([0,\infty),H^s(\mb{R}^n)),
\end{align*}
with some positive parameters $s$. Soon afterward  in Section \ref{Section_Blow-up}, we apply a test function method to prove nonexistence of global (in time) weak solutions to the semilinear Cauchy problem \eqref{Semi_Linear_MGT_Dissipative} if the power $p$ fulfills some conditions. We should underline that the result in the one-dimensional case is optimal due to the blow-up result holding for any $1<p<\infty$.

Lastly, throughout Sections \ref{Section_Linear_MGT}, \ref{Section_Asymptotic_Profile}, \ref{Section_Global_Existence} and \ref{Section_Blow-up}, we will consider the MGT equations with vanishing first and second data. Indeed, non-vanishing third data will exert some dominant influences on the total estimates and existence results of solutions. We expect that one may derive the corresponding results with non-vanishing data by following the same approaches as we did later without any additional difficulties. Clearly, additional regularities for initial data would be necessary.

\medskip

\noindent\textbf{Notation: } We give some notations to be used in this paper. Later, $c$ and $C$ denote some positive constants, which may be changed from
line to line. We denote that $f\lesssim g$ if there exists a positive constant $C$ such that $f\leqslant Cg$ and, analogously, for $f\gtrsim g$. We denote $\lceil r\rceil:=\min\{y\in\mb{N}: 0<r\leqslant y\}$ as the positive ceiling function. $B_R$ stands for the ball around the origin with radius $R$ in $\mathbb{R}^n$. Moreover, $\dot{H}^s_q(\mb{R}^n)$ with $s\geqslant0$ and $1\leqslant q<\infty$, denote the Riesz potential spaces based on the Lebesgue spaces $L^q(\mb{R}^n)$. Finally, $|D|^s$ with $s\geqslant0$ stands for the pseudo-differential operator with the symbol $|\xi|^s$.

\section{Estimates of solutions to the linear MGT equation in the dissipative case}\label{Section_Linear_MGT}
\subsection{Pointwise estimates in the Fourier space}
At first, we apply the partial Fourier transform with respect to spatial variables to the Cauchy problem \eqref{Linear_MGT_Dissipative}. Then, it yields the following initial value problem for the third-order $|\xi|$-dependent ordinary differential equation:
\begin{align}\label{Linear_MGT_Dissipative_Fourier_Trans}
\begin{cases}
\tau\hat{u}_{ttt}+\hat{u}_{tt}+\beta|\xi|^2\hat{u}_t+|\xi|^2\hat{u}=0,&\xi\in\mb{R}^n,\ t>0,\\
\hat{u}(0,\xi)=0,\ \hat{u}_t(0,\xi)=0,\ \hat{u}_{tt}(0,\xi)=\hat{u}_2(\xi),&\xi\in\mb{R}^n,
\end{cases}
\end{align}
whose solution can be given by
\begin{align}\label{Representation_Fourier_u}
\hat{u}(t,\xi)=\widehat{K}(t,\xi)\hat{u}_2(\xi):=\left(\sum\limits_{j=1,2,3}\frac{\exp(\lambda_j(|\xi|)t)}{\prod_{k=1,2,3,\ k\neq j}\left(\lambda_j(|\xi|)-\lambda_k(|\xi|)\right)}\right)\hat{u}_2(\xi),
\end{align}
where $\lambda_j=\lambda_j(|\xi|)$ with $j=1,2,3,$ are three pairwise distinct roots to the cubic equation
\begin{align}\label{Cubic_Equation}
\tau\lambda^3+\lambda^2+\beta|\xi|^2\lambda+|\xi|^2=0.
\end{align} 
Here, the case for multiple roots can be regarded as a zero measure set with respect to $|\xi|$, and precisely, the  discriminant of \eqref{Cubic_Equation} is zero, that is
\begin{align*}
\triangle_{\mathrm{Cub}}=|\xi|^2\left(-4\beta^3\tau|\xi|^4+\left(18\beta\tau+\beta^2-27\tau^2\right)|\xi|^2-4\right)=0,
\end{align*}
if and only if
\begin{align}\label{Zero_Meas}
|\xi|^2=0\ \ \mbox{or}\ \ |\xi|^2=\frac{18\beta\tau+\beta^2-27\tau^2\pm\sqrt{(18\beta\tau+\beta^2-27\tau^2)^2-64\beta^3\tau}}{8\beta^3\tau}.
\end{align}
Under these preparations, we just need to discuss the case when the cubic equation \eqref{Cubic_Equation} does not have any roots of double multiply. Estimates of solutions in a zero measure set \eqref{Zero_Meas} do not give any influence on total estimates. Indeed, the pointwise estimates of solutions in the zero measure set were shown in \cite{PellicerSaiHouari2017}.
\begin{remark}\label{Rem_Finite_Prop_Speed}
	The principal symbol of the equation in \eqref{Linear_MGT_Dissipative} is given by $\tau\eta^3-\beta\eta|\zeta|^2$. Thus, the characteristic equation $\eta(\tau\eta^2-\beta|\zeta|^2)=0$ has pairwise distinct real roots $\eta=0$, $\eta=\sqrt{\beta/\tau}|\zeta|$ and $\eta=-\sqrt{\beta/\tau}|\zeta|$. In other words, the linear MGT equation in the dissipative case is strictly hyperbolic. Therefore, it is clear that the Cauchy problem \eqref{Linear_MGT_Dissipative} is well-posedness, e.g. there exists a unique Sobolev solution $u\in\ml{C}([0,\infty),H^s(\mb{R}^n))$ for $s\in[0,2]$ if $u_2\in H^{s-2}(\mb{R}^n)\subset L^2(\mb{R}^n)$. Furthermore, the theory in the strictly hyperbolic equation (see, for example, Section 3.4 in \cite{Taylor_Book}) shows that finite propagation speed property holds.
\end{remark}

Before deriving some $L^2$ estimates of solutions in the next subsection, we will prepare pointwise estimates of solutions in the Fourier space by investigating asymptotic behaviors of the kernel function $\widehat{K}(t,\xi)$. It is well-known that the explicit formula of the cubic equation \eqref{Cubic_Equation} can be uniquely given by Cardano's formula. Nevertheless, this would be a complex way to analyze behaviors of the kernel. To overcome the difficulty, we will employ asymptotic expansions of eigenvalues in small and large frequency zones, and demonstrate an exponential stability of solutions in bounded frequency zone. We define these zones in Fourier space by
\begin{align*}
Z_{\intt}(\varepsilon)&:=\left\{\xi\in\mb{R}^n:|\xi|<\varepsilon\ll1\right\},\\
Z_{\midd}(\varepsilon,N)&:=\left\{\xi\in\mb{R}^n:\varepsilon\leqslant |\xi|\leqslant N\right\},\\
Z_{\extt}(N)&:=\left\{\xi\in\mb{R}^n:|\xi|> N\gg1\right\}.
\end{align*}
Let us set the cut-off functions $\chi_{\intt}(\xi),\chi_{\midd}(\xi),\chi_{\extt}(\xi)\in \mathcal{C}^{\infty}(\mb{R}^n)$ owning their supports in $Z_{\intt}(\varepsilon)$, $Z_{\midd}(\varepsilon/2,2N)$ and $Z_{\extt}(N)$, respectively. Furthermore, they fulfill $\chi_{\midd}(\xi)=1-\chi_{\intt}(\xi)-\chi_{\extt}(\xi)$ for all $\xi \in \mb{R}^n$.

\begin{prop}\label{Prop_Pointwise_Estimate}
	Let $\tau\in(0,\beta)$. Then, the solution $\hat{u}=\hat{u}(t,\xi)$ to the initial value problem \eqref{Linear_MGT_Dissipative_Fourier_Trans} fulfills the following estimates:
	\begin{align}
	\chi_{\intt}(\xi)|\hat{u}(t,\xi)|&\lesssim\chi_{\intt}(\xi)\left(\left( |\cos(|\xi|t)|+\frac{|\sin(|\xi|t)|}{|\xi|}\right)\mathrm{e}^{-\frac{\beta-\tau}{2}|\xi|^2t}+\mathrm{e}^{-\frac{1}{\tau}t}\right)|\hat{u}_2(\xi)|,\label{Est_Small_Fouier}\\
	\chi_{\midd}(\xi)|\hat{u}(t,\xi)|&\lesssim\chi_{\midd}(\xi)\mathrm{e}^{-ct}|\hat{u}_2(\xi)|,\label{Est_Middle_Fouier}\\
	\chi_{\extt}(\xi)|\hat{u}(t,\xi)|&\lesssim\chi_{\extt}(\xi)\frac{1}{|\xi|^2}\mathrm{e}^{-\min\left\{\frac{\beta-\tau}{2\beta\tau},\frac{1}{\beta}\right\}t}|\hat{u}_2(\xi)|,\label{Est_Large_Fouier}
	\end{align}
	for some constants $c>0$.
\end{prop}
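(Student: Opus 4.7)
The plan is to analyse the kernel
\[
\widehat K(t,\xi)=\sum_{j=1}^{3}\frac{e^{\lambda_j(|\xi|)t}}{\prod_{k\neq j}(\lambda_j-\lambda_k)}
\]
separately on each of the three zones, the key input being an asymptotic expansion of the three roots of \eqref{Cubic_Equation} in two opposite regimes, together with a Routh--Hurwitz stability argument on the compact zone.

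For small frequencies I would start from the degenerate cubic $\tau\lambda^3+\lambda^2=0$ obtained as $|\xi|\to 0$, which has a simple root at $-1/\tau$ and a double root at $0$. A regular perturbation around $-1/\tau$ gives $\lambda_3(|\xi|)=-\tfrac{1}{\tau}+O(|\xi|^2)$, whereas the double root at $0$ splits into a conjugate pair; substituting the ansatz $\lambda=i|\xi|\bigl(1+\alpha|\xi|+\dots\bigr)$ and matching orders $|\xi|^3$ yields
\[
\lambda_{1,2}(|\xi|)=\pm i|\xi|-\frac{\beta-\tau}{2}|\xi|^{2}+O(|\xi|^3).
\]
Inserting these into the partial-fraction formula and expanding $(\lambda_{1,2}-\lambda_3)^{-1}=\tau\bigl(1\mp i\tau|\xi|+O(|\xi|^2)\bigr)$, the leading contribution of the conjugate pair organises as $\tau\,e^{-\frac{\beta-\tau}{2}|\xi|^2 t}\sin(|\xi|t)/|\xi|$ from the $O(1/|\xi|)$ part of $(\lambda_1-\lambda_2)^{-1}$ and as $\tau^2 e^{-\frac{\beta-\tau}{2}|\xi|^2 t}\cos(|\xi|t)$ from the $O(1)$ correction, while the third root contributes a term of size $e^{-t/\tau}$; taking absolute values and absorbing higher-order powers of $|\xi|$ into the cut-off gives \eqref{Est_Small_Fouier}.

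For large frequencies I would rescale: the ansatz $\lambda=i\sqrt{\beta/\tau}\,|\xi|+a+O(1/|\xi|)$ annihilates the leading $|\xi|^3$ terms in \eqref{Cubic_Equation} and, matching the $|\xi|^2$ coefficients, forces $a=-\frac{\beta-\tau}{2\beta\tau}$; hence $\lambda_{1,2}(|\xi|)=\pm i\sqrt{\beta/\tau}\,|\xi|-\frac{\beta-\tau}{2\beta\tau}+O(1/|\xi|)$. The remaining root is bounded, so a regular ansatz gives $\lambda_3(|\xi|)=-1/\beta+O(1/|\xi|^2)$. The two differences $\lambda_j-\lambda_k$ entering each denominator are each of size $|\xi|$, producing the overall $|\xi|^{-2}$ prefactor, and the three real parts yield an exponential decay at rate $\min\{(\beta-\tau)/(2\beta\tau),\,1/\beta\}$, which is \eqref{Est_Large_Fouier}.

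For middle frequencies the eigenvalues cannot be computed explicitly, so I would instead apply the Routh--Hurwitz criterion to \eqref{Cubic_Equation}: with $a_0=\tau,\,a_1=1,\,a_2=\beta|\xi|^2,\,a_3=|\xi|^2$, the condition $a_1a_2>a_0a_3$ reduces to $\beta>\tau$, which is exactly our dissipative assumption. Hence every $\lambda_j(|\xi|)$ has strictly negative real part on the compact annulus $\varepsilon/2\le|\xi|\le 2N$, and since the three roots are pairwise distinct there (after removing the zero-measure set \eqref{Zero_Meas}), continuity and compactness provide a uniform lower bound $c>0$ on $-\mathrm{Re}\,\lambda_j$ and a uniform upper bound on $|\lambda_j-\lambda_k|^{-1}$, yielding \eqref{Est_Middle_Fouier}.

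The main obstacle I anticipate is the bookkeeping in the small-frequency expansion: the cosine contribution in \eqref{Est_Small_Fouier} is not visible at leading order in $1/(\lambda_1-\lambda_3)$ and only appears after one carries the expansion one order further, so the cancellation between the singular prefactor $1/|\xi|$ and the imaginary corrections in $\lambda_{1,2}-\lambda_3$ has to be tracked carefully. The large-frequency computation is structurally similar but simpler because no cancellations produce trigonometric factors, and the middle-frequency case reduces to an algebraic stability test.
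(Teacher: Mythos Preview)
Your proposal is correct and, for the small and large frequency zones, essentially identical to the paper's argument: the same asymptotic expansions of the three roots are obtained (the paper writes $\lambda_3=-\tfrac{1}{\tau}+(\beta-\tau)|\xi|^2+O(|\xi|^3)$ at low frequencies and labels the bounded high-frequency root $\lambda_1=-1/\beta+O(|\xi|^{-1})$, but this is only notation), and the kernel is estimated by substituting these expansions into the partial-fraction formula just as you describe.

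The one genuine difference is the middle zone. The paper does \emph{not} use Routh--Hurwitz; instead it argues that no root of \eqref{Cubic_Equation} can be purely imaginary (plugging $\lambda=id$, $d\neq 0$, forces $d^2=|\xi|^2$ and $\tau d^2=\beta|\xi|^2$ simultaneously, a contradiction since $\tau<\beta$), and then invokes continuity of the eigenvalues together with the sign of $\mathrm{Re}\,\lambda_j$ already established at the edges of the annulus from the asymptotic expansions. Your Routh--Hurwitz route is more self-contained: the inequality $a_1a_2>a_0a_3\iff\beta>\tau$ yields $\mathrm{Re}\,\lambda_j<0$ for every $|\xi|>0$ in one stroke, without needing to anchor the sign at the endpoints. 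The paper's approach, on the other hand, avoids citing an external criterion and reuses information already obtained. Both lead to the same uniform bound on the compact annulus via continuity and compactness.
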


\begin{proof}
	Let us begin with estimating the solution $\hat{u}(t,\xi)$ for small frequencies. Motivated by the recent research \cite{PellicerSaiHouari2017}, we deduce that the eigenvalues $\lambda_j(|\xi|)$ with $j=1,2,3,$ have the asymptotic expansions for $|\xi|\to0$ such that
	\begin{align}\label{Asym_Expan_Small}
	\lambda_j(|\xi|)=
	\lambda_j^{(0)}+\lambda_j^{(1)}|\xi|+\lambda_j^{(2)}|\xi|^2+\cdots,
	\end{align}
	where the coefficients $\lambda_j^{(k)}\in\mb{C}$ for all $k\in\mb{N}_0$. What we need now is the dominant part of pairwise distinct eigenvalues. So, by plugging \eqref{Asym_Expan_Small} into \eqref{Cubic_Equation} and processing lengthy but straightforward computations, until different characteristic roots appear,
	the eigenvalues behave asymptotically for $|\xi|\to0$ as
	\begin{align*}
	\lambda_{1,2}(|\xi|)&=\pm i|\xi|-\frac{\beta-\tau}{2}|\xi|^2+\ml{O}(|\xi|^3),\\
	\lambda_3(|\xi|)&=-\frac{1}{\tau}+(\beta-\tau)|\xi|^2+\ml{O}(|\xi|^3).
	\end{align*}
	Let us denote a $|\xi|$-dependent function by
	\begin{align}\label{Defn_T0}
	\ml{T}_0(|\xi|):=\frac{1}{\tau}-\frac{3}{2}(\beta-\tau)|\xi|^2=\ml{O}(1)\ \ \mbox{as}\ \ |\xi|\to 0.
	\end{align}
	According to the representation of the kernel given in \eqref{Representation_Fourier_u}, the Fourier transform of the kernel localized in small frequency zone can be estimated by
	\begin{align*}
	\chi_{\intt}(\xi)|\widehat{K}(t,\xi)|&\lesssim \chi_{\intt}(\xi)\frac{\mathrm{e}^{-\frac{\beta-\tau}{2}|\xi|^2t}}{\ml{T}_0^2(|\xi|)+|\xi|^2}\left(|\cos(|\xi|t)|+\frac{|\sin(|\xi|t)|}{|\xi|}\ml{T}_0(|\xi|)\right)+\chi_{\intt}(\xi)\frac{\mathrm{e}^{-\frac{1}{\tau}t+(\beta-\tau)|\xi|^2t}}{\ml{T}_0^2(|\xi|)+|\xi|^2}\\
	&\lesssim\chi_{\intt}(\xi)\left(\left(|\cos(|\xi|t)|+\frac{|\sin(|\xi|t)|}{|\xi|}\right)\mathrm{e}^{-\frac{\beta-\tau}{2}|\xi|^2t}+\mathrm{e}^{-\frac{1}{\tau}t+(\beta-\tau)|\xi|^2t}\right),
	\end{align*}
	which immediately implies the desired estimate \eqref{Est_Small_Fouier}.
	
	Next, let us turn to the case for large frequencies. The eigenvalues to the cubic equation \eqref{Cubic_Equation} for $|\xi|\to\infty$ have the asymptotic expansions such that
	\begin{align}\label{Asym_Expan_Large}
	\lambda_j(|\xi|)
	=\bar{\lambda}_j^{(0)}|\xi|^2+\bar{\lambda}_j^{(1)}|\xi|+\bar{\lambda}_j^{(2)}+\bar{\lambda}_j^{(3)}|\xi|^{-1}+\cdots,
	\end{align}
	where the coefficients $\bar{\lambda}_j^{(k)}\in\mb{C}$ for all $k\in\mb{N}_0$. By substituting \eqref{Asym_Expan_Large} into \eqref{Cubic_Equation}, 
	it yields that the eigenvalues have asymptotic behaviors for $|\xi|\to\infty$ as follows:
	\begin{align*}
	\lambda_1(|\xi|)&=-\frac{1}{\beta}+\ml{O}(|\xi|^{-1}),\\
	\lambda_{2,3}(|\xi|)&=\pm i\frac{\sqrt{\beta}}{\sqrt{\tau}}|\xi|-\frac{\beta-\tau}{2\beta\tau}+\ml{O}(|\xi|^{-1}).
	\end{align*}
	Hence, the next chain inequalities hold:
	\begin{align*}
	\chi_{\extt}(\xi)|\widehat{K}(t,\xi)|&\lesssim
	\chi_{\extt}(\xi)\left(\frac{\mathrm{e}^{-\frac{1}{\beta}t}}{\left(\frac{\beta-3\tau}{2\beta\tau}\right)^2+\frac{\beta}{\tau}|\xi|^2}+\left(\frac{\left|\sin\left(\frac{\sqrt{\beta}}{\sqrt{\tau}}|\xi|t\right)\right|}{|\xi|\left(\left(\frac{\beta-3\tau}{2\beta\tau}\right)^2+\frac{\beta}{\tau}|\xi|^2\right)}+\frac{\left|\cos\left(\frac{\sqrt{\beta}}{\sqrt{\tau}}|\xi|t\right)\right|}{\left(\frac{\beta-3\tau}{2\beta\tau}\right)^2+\frac{\beta}{\tau}|\xi|^2}\right)\mathrm{e}^{-\frac{\beta-\tau}{2\beta\tau}t}\,\right)\\
	&\lesssim\chi_{\extt}(\xi)\frac{1}{|\xi|^2}\left(\mathrm{e}^{-\frac{\beta-\tau}{2\beta\tau}t}+\mathrm{e}^{-\frac{1}{\beta}t}\right).
	\end{align*}
	The previous estimate combined with the formula of solution in \eqref{Representation_Fourier_u} proves our desired assertion \eqref{Est_Large_Fouier}.
	
	Finally, let us prove an exponential decay estimate of solutions localized in bounded frequency zone. With the aim of deriving an exponential stability of solutions, we now follow the idea of Subsection 2.3 in \cite{Chen.JMAA}. Let us assume that there exists an eigenvalue $\lambda=id$ with $d\in\mb{R}\backslash\{0\}$. In other words, according to \eqref{Cubic_Equation}, the non-zero real number $d$ should fulfill the equalities
	\begin{align*}
	id\left(\tau d^2-\beta|\xi|^2\right)=0\ \ \mbox{and}\ \ d^2-|\xi|^2=0.
	\end{align*}
	Due to the settings that $d\neq0$ and $\tau\in(0,\beta)$, it immediately finds a contradiction. Namely, there does not exists any pure imaginary roots to the cubic equation \eqref{Cubic_Equation} for $\xi\in Z_{\midd}(\varepsilon,N)$. Viewing the expansions of eigenvalues, we know $\mathrm{Re}\,\lambda_j(|\xi|)<0$ for any $j=1,2,3$ as $\xi\in Z_{\intt}(\varepsilon)\cup Z_{\extt}(N)$. Therefore, by applying the compactness of bounded frequency zone $Z_{\midd}(\varepsilon,N)$ and the continuity of the eigenvalues, the derivation of the exponential decay estimates \eqref{Est_Middle_Fouier} and the proof of this proposition are complete.
\end{proof}

\subsection{$L^2$ estimates of solutions}
Basing on the pointwise estimates shown in Proposition \ref{Prop_Pointwise_Estimate}, we next investigate $L^2-L^2$ estimates with or without additional $L^m$ regularity with $m\in[1,2)$, respectively. These estimates will play an essential  role in the forthcoming part to consider global (in time) existence of solutions to the semilinear MGT model. 

\begin{theorem}\label{Thm_L2-L2_Est}
	Let $\tau\in(0,\beta)$. Then, the solution $u=u(t,x)$ to the Cauchy problem \eqref{Linear_MGT_Dissipative} fulfills the following estimates:
	\begin{align*}
	\|\,|D|^su(t,\cdot)\|_{L^2(\mb{R}^n)}\lesssim
	\begin{cases}
	(1+t)^{1-\frac{s}{2}}\|u_2\|_{H^{\max\{s-2,0\}}(\mb{R}^n)}&\mbox{if}\ \ s\in[0,1),\\
	(1+t)^{\frac{1}{2}-\frac{s}{2}}\|u_2\|_{H^{\max\{s-2,0\}}(\mb{R}^n)}&\mbox{if}\ \ s\in[1,\infty),
	\end{cases} 
	\end{align*}
	for any $t\geqslant0$.
\end{theorem}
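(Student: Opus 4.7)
The plan is to apply Plancherel's theorem so that $\|\,|D|^s u(t,\cdot)\|_{L^2(\mb{R}^n)}$ equals $\|\,|\xi|^s\hat{u}(t,\cdot)\|_{L^2(\mb{R}^n)}$, insert the partition of unity $1=\chi_{\intt}+\chi_{\midd}+\chi_{\extt}$ introduced before Proposition \ref{Prop_Pointwise_Estimate}, and on each zone use the pointwise bound of Proposition \ref{Prop_Pointwise_Estimate} as an $L^{\infty}$ Fourier multiplier acting on $\hat{u}_2\in L^2$. Concretely, I would bound
\[
\|\chi_{\sharp}(\xi)\,|\xi|^s\hat{u}(t,\cdot)\|_{L^2}\le \|\chi_{\sharp}(\xi)\,|\xi|^s\widehat{K}(t,\cdot)\|_{L^{\infty}}\,\|u_2\|_{L^2},\qquad \sharp\in\{\intt,\midd,\extt\},
\]
except that for $s>2$ in the large zone I will keep one factor $|\xi|^{s-2}$ on $\hat{u}_2$ and absorb it into the $H^{s-2}$ norm of $u_2$. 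The three supremum computations below will then yield the stated estimate after summation.

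The main work lies in the small zone. From \eqref{Est_Small_Fouier} the leading contribution is $|\xi|^s\,\tfrac{|\sin(|\xi|t)|}{|\xi|}\,e^{-\frac{\beta-\tau}{2}|\xi|^2 t}$, since the $|\cos(|\xi|t)|$ term and the $e^{-t/\tau}$ term are dominated by it (either by a smaller power of $t$ or by an exponentially small factor). The case split in the theorem is dictated entirely by whether $|\xi|^{s-1}$ is singular at the origin. For $s\in[0,1)$ the weight $|\xi|^{s-1}$ is singular, so I am forced to use $|\sin(|\xi|t)|/|\xi|\le t$, which gives $t\,|\xi|^s e^{-\frac{\beta-\tau}{2}|\xi|^2 t}$, whose supremum in $\xi$ (attained near $|\xi|\sim t^{-1/2}$) is of order $t\cdot t^{-s/2}=t^{1-s/2}$. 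For $s\ge 1$ the weight $|\xi|^{s-1}$ is bounded near the origin and I would instead apply $|\sin(|\xi|t)|\le 1$, obtaining $|\xi|^{s-1} e^{-\frac{\beta-\tau}{2}|\xi|^2 t}$ whose supremum is of order $t^{(1-s)/2}=t^{1/2-s/2}$. These two suprema, paired with $\|u_2\|_{L^2}$, reproduce the two branches of the stated growth rates (the small-time regime $t\in[0,1]$ being absorbed into the factor $1+t$).

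The middle and large zones are routine. On $Z_{\midd}$ the weight $|\xi|^s$ is uniformly bounded because $Z_{\midd}$ is compact and away from the origin, and \eqref{Est_Middle_Fouier} supplies a pure exponential $e^{-ct}$, giving an exponentially decaying contribution against $\|u_2\|_{L^2}$. On $Z_{\extt}$, estimate \eqref{Est_Large_Fouier} yields $|\xi|^s|\widehat{K}(t,\xi)|\lesssim |\xi|^{s-2}\,e^{-ct}$; when $s\le 2$ the factor $|\xi|^{s-2}$ is uniformly bounded on $\{|\xi|>N\}$ so the contribution is again exponentially small, whereas when $s>2$ I retain $|\xi|^{s-2}$ on $\hat{u}_2$ and use $\|\,|\xi|^{s-2}\hat{u}_2\|_{L^2}\lesssim \|u_2\|_{H^{s-2}}$, which explains the appearance of $H^{\max\{s-2,0\}}$ in the statement. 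Adding the three contributions, the small-zone term dominates for large $t$ and produces the stated polynomial rates. The only genuinely delicate step, and what I expect to be the main obstacle, is the dichotomy at $s=1$ in the small zone: choosing the correct elementary bound on $|\sin(|\xi|t)|$ so that the resulting multiplier has finite $L^{\infty}$ norm while yielding a sharp power of $t$.
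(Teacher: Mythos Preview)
Your proposal is correct and follows essentially the same approach as the paper: Plancherel, the three-zone decomposition, and an $L^{\infty}$ multiplier bound on each zone, with the identical dichotomy at $s=1$ in the small-frequency zone (use $|\sin(|\xi|t)|\leqslant |\xi|t$ for $s<1$ and $|\sin(|\xi|t)|\leqslant 1$ for $s\geqslant 1$). The paper differs only in presentation---it splits the small-time and large-time cases explicitly and computes the cosine contribution separately rather than noting it is dominated---but the argument and the resulting rates are the same.
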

\begin{proof}
	By applying Proposition \ref{Prop_Pointwise_Estimate} and the Parseval equality, we arrive at
	\begin{align}
	\|\,|D|^su(t,\cdot)\|_{L^2(\mb{R}^n)}&\lesssim\left\|\chi_{\intt}(\xi)|\xi|^s\left(\left(|\cos(|\xi|t)|+\frac{|\sin(|\xi|t)|}{|\xi|}\right)\mathrm{e}^{-\frac{\beta-\tau}{2}|\xi|^2t}+\mathrm{e}^{-\frac{1}{\tau}t}\right)\right\|_{L^{\infty}(\mb{R}^n)}\|u_2\|_{L^2(\mb{R}^n)}\notag\\
	&\quad+\mathrm{e}^{-ct}\|u_2\|_{L^2(\mb{R}^n)}+\mathrm{e}^{-ct}\left\|\chi_{\extt}(\xi)|\xi|^{s-2}\hat{u}_2(\xi)\right\|_{L^2(\mb{R}^n)},\label{Supp_01}
	\end{align}
	with the aid of  the norm inequality $\|\cdot\|_{L^2(\mb{R}^n)}\leqslant\|\cdot\|_{L^{\infty}(\mb{R}^n)}\|\cdot\|_{L^2(\mb{R}^n)}$. 
	
	Let us estimate the first $L^{\infty}$ norm on the right-hand side of \eqref{Supp_01}. Obviously, by using $|\cos(|\xi|t)|\leqslant 1$, then for any $t\geqslant 0$ we get
	\begin{align*}
	\left\|\chi_{\intt}(\xi)|\xi|^s|\cos(|\xi|t)|\mathrm{e}^{-\frac{\beta-\tau}{2}|\xi|^2t}\right\|_{L^{\infty}(\mb{R}^n)}\lesssim \left\|\chi_{\intt}(\xi)|\xi|^s\mathrm{e}^{-\frac{\beta-\tau}{2}|\xi|^2t}\right\|_{L^{\infty}(\mb{R}^n)}\lesssim (1+t)^{-\frac{s}{2}}.
	\end{align*}
	We will divide the remaindering estimate into two parts.\\ Concerning the case for small time, i.e. $t\in[0,1]$, one may directly obtain
	\begin{align*}
	\left\|\chi_{\intt}(\xi)|\xi|^{s-1}|\sin(|\xi|t)|\mathrm{e}^{-\frac{\beta-\tau}{2}|\xi|^2t}\right\|_{L^{\infty}(\mb{R}^n)}\lesssim
	t\left\|\chi_{\intt}(\xi)|\xi|^s\frac{|\sin(|\xi|t)|}{|\xi|t}\mathrm{e}^{-\frac{\beta-\tau}{2}|\xi|^2t}\right\|_{L^{\infty}(\mb{R}^n)}\lesssim1,
	\end{align*}
	which immediately shows bounded estimates for small time.\\
	For another, concerning the case for large time, i.e. $t\in(1,\infty)$, one applies $|\sin(|\xi|t)|\leqslant 1$ to have
	\begin{align*}
	\left\|\chi_{\intt}(\xi)|\xi|^{s-1}|\sin(|\xi|t)|\mathrm{e}^{-\frac{\beta-\tau}{2}|\xi|^2t}\right\|_{L^{\infty}(\mb{R}^n)}&\lesssim t^{-\frac{s-1}{2}}\left\|\chi_{\intt}(\xi)\left(|\xi|^2t\right)^{\frac{s-1}{2}}\mathrm{e}^{-\frac{\beta-\tau}{2}|\xi|^2t}\right\|_{L^{\infty}(\mb{R}^n)}\lesssim t^{\frac{1}{2}-\frac{s}{2}}
	\end{align*}
	for $s\in[1,\infty)$. In the case $s\in[0,1)$, we do by another way that
	\begin{align*}
	\left\|\chi_{\intt}(\xi)|\xi|^{s-1}|\sin(|\xi|t)|\mathrm{e}^{-\frac{\beta-\tau}{2}|\xi|^2t}\right\|_{L^{\infty}(\mb{R}^n)}\lesssim
	t^{1-\frac{s}{2}}\left\|\chi_{\intt}(\xi)\left(|\xi|^2t\right)^{\frac{s}{2}}\frac{|\sin(|\xi|t)|}{|\xi|t}\mathrm{e}^{-\frac{\beta-\tau}{2}|\xi|^2t}\right\|_{L^{\infty}(\mb{R}^n)}\lesssim t^{1-\frac{s}{2}}.
	\end{align*}
	Thus, it completes that
	\begin{align*}
	\left\|\chi_{\intt}(\xi)|\xi|^{s-1}|\sin(|\xi|t)|\mathrm{e}^{-\frac{\beta-\tau}{2}|\xi|^2t}\right\|_{L^{\infty}(\mb{R}^n)}\lesssim\begin{cases}
	(1+t)^{1-\frac{s}{2}}&\mbox{if}\ \  s\in[0,1),\\
	(1+t)^{\frac{1}{2}-\frac{s}{2}}&\mbox{if}\ \ s\in[1,\infty),
	\end{cases} 
	\end{align*}
	for any $t\geqslant0$. Particularly, decay estimates hold for any $s\in(1,\infty)$.
	
	On the other hand, we know
	\begin{align*}
	\left\|\chi_{\extt}(\xi)|\xi|^{s-2}\hat{u}_2(\xi)\right\|_{L^2(\mb{R}^n)}\lesssim\|u_2\|_{H^{\max\{s-2,0\}}(\mb{R}^n)},
	\end{align*}
	where we used $\chi_{\extt}(\xi)|\xi|^{s-2}\lesssim 1$ if $s\in[0,2]$ and $\chi_{\extt}(\xi)|\xi|^{s-2}\lesssim (1+|\xi|^2)^{(s-2)/2}$ if $s\in(2,\infty)$.
	
	Summarizing the derived estimates, the proof is now complete.
\end{proof}

\begin{theorem}\label{Thm_Lm-L2_Est}
	Let $\tau\in(0,\beta)$. Then, the solution $u=u(t,x)$ to the Cauchy problem \eqref{Linear_MGT_Dissipative} fulfills the following estimates:
	\begin{align*}
	\|\,|D|^su(t,\cdot)\|_{L^2(\mb{R}^n)}\lesssim
	\begin{cases}
	\ml{F}(t)\|u_2\|_{H^{\max\{s-2,0\}}(\mb{R}^n)\cap L^m(\mb{R}^n)}&\mbox{if}\ \ 2sm+(2-m)n< 2+m,\\
	(1+t)^{\frac{1}{2}-\frac{s}{2}-\frac{n(2-m)}{4m}}\|u_2\|_{H^{\max\{s-2,0\}}(\mb{R}^n)\cap L^m(\mb{R}^n)}&\mbox{if}\ \ 2sm+(2-m)n\geqslant 2+m,
	\end{cases} 
	\end{align*}
	for any $t\geqslant0$, where $s\in[0,\infty)$ and $m\in[1,2)$. In the above case $2sm+(2-m)n< 2+m$, the time-dependent coefficient  is denoted by
	\begin{align*}
	\ml{F}(t):=\begin{cases}
	(1+t)^{1-s-\frac{n(2-m)}{2m}}&\mbox{if}\ \ 2sm+(2-m)n<2m,\\
	(1+t)^{\frac{1}{2}-\frac{s}{2}-\frac{n(2-m)}{4m}}(\ln (\mathrm{e}+t))^{\frac{2-m}{2m}}&\mbox{if}\ \ 2sm+(2-m)n=2m,\\
	(1+t)^{\frac{1}{2}-\frac{s}{2}-\frac{n(2-m)}{4m}+\frac{2+m-2sm-(2-m)n}{2(2+m)}}&\mbox{if}\ \ 2sm+(2-m)n>2m.
	\end{cases}
	\end{align*}
\end{theorem}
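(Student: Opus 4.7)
The plan is to follow the frequency decomposition used in the proof of Theorem \ref{Thm_L2-L2_Est}: apply the cut-offs $\chi_{\intt},\chi_{\midd},\chi_{\extt}$ together with the pointwise bounds of Proposition \ref{Prop_Pointwise_Estimate}. On $Z_{\midd}(\varepsilon,N)$ and $Z_{\extt}(N)$ the kernel decays exponentially in time, so Parseval combined with the trivial inclusion $L^m(\mb{R}^n)\cap L^2(\mb{R}^n)\hookrightarrow L^2(\mb{R}^n)$ absorbs these contributions into $\mathrm{e}^{-ct}\|u_2\|_{H^{\max\{s-2,0\}}(\mb{R}^n)}$, the negative Sobolev index originating, as in Theorem \ref{Thm_L2-L2_Est}, from the $|\xi|^{-2}$ gain in \eqref{Est_Large_Fouier}. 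The entire polynomial behaviour is thus concentrated on $Z_{\intt}(\varepsilon)$.

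On $Z_{\intt}(\varepsilon)$, the Hausdorff--Young inequality $\|\hat{u}_2\|_{L^{m'}(\mb{R}^n)}\lesssim\|u_2\|_{L^m(\mb{R}^n)}$, valid since $m\in[1,2)$, combined with H\"older's inequality for the conjugate pair $(r,m')$ fixed by $\tfrac{1}{r}+\tfrac{1}{m'}=\tfrac{1}{2}$, namely $r=\tfrac{2m}{2-m}$, reduces the task to estimating
\begin{equation*}
\bigl\||\xi|^s\chi_{\intt}(\xi)\,\widehat{K}(t,\xi)\bigr\|_{L^r(\mb{R}^n)}.
\end{equation*}
The exponentially small summand $\mathrm{e}^{-t/\tau}$ coming from $\lambda_3$ is harmless. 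The cosine piece $|\cos(|\xi|t)|\,\mathrm{e}^{-\frac{\beta-\tau}{2}|\xi|^2t}$ of the oscillatory envelope yields, through the Gaussian substitution $\eta=\xi\sqrt{t}$, the uniform rate $t^{-s/2-n(2-m)/(4m)}$. The delicate part is the sine piece, which carries the singular weight $|\xi|^{s-1}$ near the origin.

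To handle it, I will split the radial integral $\int_0^\varepsilon(\cdots)\rho^{n-1}\,d\rho$ at the two natural scales $\rho=1/t$ and $\rho=1/\sqrt{t}$: use $|\sin(\rho t)|\leq\rho t$ on $(0,1/t)$, $|\sin(\rho t)|\leq 1$ on the remaining two subregions, and retain the Gaussian factor $\mathrm{e}^{-\frac{\beta-\tau}{2}\rho^2 t}$ where it is effective. The algebraic trichotomy is then dictated by the sign of $(s-1)r+n$, which through $r=2m/(2-m)$ coincides with the sign of $2sm+(2-m)n-2m$: a negative sign forces the innermost $(0,1/t)$ piece to dominate and produces the slow rate $(1+t)^{1-s-n(2-m)/(2m)}$; the vanishing sign generates the logarithmic factor $(\ln(\mathrm{e}+t))^{(2-m)/(2m)}$ from the middle subregion $(1/t,1/\sqrt{t})$; a strictly positive sign allows the outer subregion $(1/\sqrt{t},\varepsilon)$ to drive the estimate through the Gaussian rate $(1+t)^{1/2-s/2-n(2-m)/(4m)}$, which takes over entirely once the second threshold $(s-1)r+n\geq 1$, equivalently $2sm+(2-m)n\geq 2+m$, is crossed.

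The principal obstacle is the intermediate regime $2m<2sm+(2-m)n<2+m$, where the contributions from the three radial subregions are comparable and must be balanced carefully against the two integrability thresholds $(s-1)r+n=0$ and $(s-1)r+n=1$ so as to reproduce the exact correction exponent $\frac{2+m-2sm-(2-m)n}{2(2+m)}$ appearing in $\mathcal{F}(t)$. Once this bookkeeping is completed, assembling the three frequency zones yields the stated estimate; the negative Sobolev index in the data norm appears only from the $Z_{\extt}(N)$ piece, exactly as in Theorem \ref{Thm_L2-L2_Est}.
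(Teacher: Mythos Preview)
Your overall strategy coincides with the paper's: the mid- and high-frequency zones are dispatched exactly as in Theorem~\ref{Thm_L2-L2_Est}, and on $Z_{\intt}(\varepsilon)$ the paper also applies H\"older--Hausdorff--Young with exponent $r=2m/(2-m)$, reduces to the sine integral $\mathcal{G}(t)$, and treats the cosine and $\mathrm{e}^{-t/\tau}$ pieces just as you describe.

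The only substantive difference is how the sine integral is split. The paper substitutes $\omega=\rho\sqrt{t}$ and cuts at a single point $\omega=t^{-1/\alpha}$, then optimises the free parameter $\alpha$ separately in each subcase; in the intermediate regime $2m<2sm+(2-m)n<2+m$ it chooses $\alpha=(2+m)/m$, and this is precisely what produces the extra factor $(1+t)^{\frac{2+m-2sm-(2-m)n}{2(2+m)}}$ in $\mathcal{F}(t)$. Your two fixed cut points $\rho=1/t$ and $\rho=1/\sqrt{t}$ achieve the same goal more directly, and in fact more sharply: with $\gamma:=(s-1)r+n$, Region~I gives $\lesssim t^{-\gamma}$, Regions~II and~III each give $\lesssim t^{-\gamma/2}$ whenever $\gamma>0$, so the total is $\lesssim t^{-\gamma/2}$ for \emph{every} $\gamma>0$, not only for $\gamma\geqslant 1$. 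Equivalently, after the substitution $\omega=\rho\sqrt{t}$ one simply observes that $\int_0^\infty \omega^{\gamma-1}\mathrm{e}^{-c\omega^2}\,\mathrm{d}\omega<\infty$ for all $\gamma>0$, which already bounds $\mathcal{I}(t)$ uniformly.

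So your ``principal obstacle'' is illusory: your own three-region split does \emph{not} reproduce the correction exponent---it gives the cleaner rate $(1+t)^{1/2-s/2-n(2-m)/(4m)}$ throughout the range $\gamma>0$, which is stronger than the stated $\mathcal{F}(t)$ in the intermediate subcase and therefore still proves the theorem as an upper bound. The paper's correction term arises only because it bounds $\mathcal{I}^{(2)}(t)$ by pulling out the worst value $\omega^{\gamma-1}\leqslant t^{(1-\gamma)/\alpha}$ rather than exploiting the integrability of $\omega^{\gamma-1}$ at the origin. There is no delicate balancing for you to perform; carry out your split as written and you obtain a slight sharpening of the paper's statement.
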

\begin{remark}\label{Rem_Improv_Said}
	Let us consider the special case $m=1$. The estimates stated in Theorem \ref{Thm_Lm-L2_Est} improve those results of Theorem 5.1 and Theorem 5.3 in \cite{PellicerSaiHouari2017}. For example, concerning the estimate of the solution itself, i.e. $s=0$, according to Theorem \ref{Thm_Lm-L2_Est}, we arrive at
	\begin{align*}
	\|u(t,\cdot)\|_{L^2(\mb{R}^n)}\lesssim\begin{cases}
	(1+t)^{\frac{1}{2}}\|u_2\|_{L^2(\mb{R})\cap L^1(\mb{R})}&\mbox{if}\ \ n=1,\\
	(\ln(\mathrm{e}+t))^{\frac{1}{2}}\|u_2\|_{L^2(\mb{R}^2)\cap L^1(\mb{R}^2)}&\mbox{if}\ \ n=2,\\
	(1+t)^{\frac{1}{2}-\frac{n}{4}}\|u_2\|_{L^2(\mb{R}^n)\cap L^1(\mb{R}^n)}&\mbox{if}\ \ n\geqslant 3,
	\end{cases}
	\end{align*}
	where the derived estimates in the low-dimensional cases $n=1$ and $n=2$ are sharper than those in \cite{PellicerSaiHouari2017}. General speaking, we replace the restriction $s+n\geqslant 3$ in Theorem 5.3 shown in \cite{PellicerSaiHouari2017}  by $2s+n\geqslant 3$, which allows us to get shaper estimates in a larger admissible range of dimensions, e.g. $n=2$ with $s=1/2$. For another, the requirement of the regularity for initial data is relaxed from $H^s$ to $H^{\max\{s-2,0\}}$.
\end{remark}
\begin{proof}
	We may start by discussing the case for small frequencies. Employing H\"older's inequality and the Hausdorff-Young inequality, one has
	\begin{align*}
	&\|\chi_{\intt}(D)|D|^su(t,\cdot)\|_{L^2(\mb{R}^n)}\\
	&\lesssim\left\|\chi_{\intt}(\xi)|\xi|^s\left(\left(|\cos(|\xi|t)|+\frac{|\sin(|\xi|t)|}{|\xi|}\right)\mathrm{e}^{-\frac{\beta-\tau}{2}|\xi|^2t}+\mathrm{e}^{-\frac{1}{\tau}t}\right)\right\|_{L^{\frac{2m}{2-m}}(\mb{R}^n)}\|u_2\|_{L^m(\mb{R}^n)}\\
	&\lesssim\left\|\chi_{\intt}(\xi)\left(|\xi|^s|\cos(|\xi|t)|+|\xi|^{s-1}|\sin(|\xi|t)|\right)\mathrm{e}^{-\frac{\beta-\tau}{2}|\xi|^2t}\right\|_{L^{\frac{2m}{2-m}}(\mb{R}^n)}\|u_2\|_{L^m(\mb{R}^n)}+\mathrm{e}^{-\frac{1}{\tau}t}\|u_2\|_{L^m(\mb{R}^n)}\\
	&\lesssim\left(\int_0^{\varepsilon}r^{\frac{2(s-1)m}{2-m}+n-1}|\sin(rt)|^{\frac{2m}{2-m}}\mathrm{e}^{-\frac{(\beta-\tau)m}{2-m}r^2t}\mathrm{d}r\right)^{\frac{2-m}{2m}}\|u_2\|_{L^m(\mb{R}^n)}+(1+t)^{-\frac{s}{2}-\frac{n(2-m)}{4m}}\|u_2\|_{L^m(\mb{R}^n)},
	\end{align*}
	where we applied
	\begin{align*}
	\left\|\chi_{\intt}(\xi)|\xi|^s|\cos(|\xi|t)|\mathrm{e}^{-\frac{\beta-\tau}{2}|\xi|^2t}\right\|_{L^{\frac{2m}{2-m}}(\mb{R}^n)}&\lesssim\left(\int_0^{\varepsilon}r^{\frac{2sm}{2-m}+n-1}|\cos(rt)|^{\frac{2m}{2-m}}\mathrm{e}^{-\frac{(\beta-\tau)m}{2-m}r^2t}\mathrm{d}r\right)^{\frac{2-m}{2m}}\\
	&\lesssim(1+t)^{-\frac{s}{2}-\frac{n(2-m)}{4m}}.
	\end{align*}
	Let us now estimate the term including the sine function which is denoted by
	\begin{align*}
	\ml{G}(t):=\left(\int_0^{\varepsilon}r^{\frac{2(s-1)m}{2-m}+n-1}|\sin(rt)|^{\frac{2m}{2-m}}\mathrm{e}^{-\frac{(\beta-\tau)m}{2-m}r^2t}\mathrm{d}r\right)^{\frac{2-m}{2m}}.
	\end{align*}
	Due to the interplay between the diffusive part from $\exp\left(-\frac{(\beta-\tau)m}{2-m}r^2t\right)$ and the oscillating part from $|\sin(rt)|/r$, one should analyze a delicate equilibrium as well as the singularity as $r\to0^+$ in the case for negative power of $r$. This treatment is the difference from those in \cite{PellicerSaiHouari2017}. For one thing, as usual approach by considering $t\in[0,1]$, we find
	\begin{align*}
	\ml{G}(t)=t \left(\int_0^{\varepsilon}r^{\frac{2ms}{2-m}+n-1}\left(\frac{|\sin(rt)|}{rt}\right)^{\frac{2m}{2-m}}\mathrm{e}^{-\frac{(\beta-\tau)m}{2-m}r^2t}\mathrm{d}r\right)^{\frac{2-m}{2m}}\lesssim 1,
	\end{align*}
	where we used $2ms/(2-m)+n-1\geqslant0$. For another, we consider $t\in(1,\infty)$ to derive 
	\begin{align*}
	\ml{G}(t)&= t^{-\frac{2(s-1)m+n(2-m)}{4m}}\left(\int_0^{\varepsilon}(r^2t)^{\frac{2(s-1)m+(n-1)(2-m)}{2(2-m)}}\mathrm{e}^{-\frac{(\beta-\tau)m}{2-m}r^2t}\mathrm{d}(r^2t)^{\frac{1}{2}}\right)^{\frac{2-m}{2m}}\lesssim t^{\frac{1}{2}-\frac{s}{2}-\frac{n(2-m)}{4m}},
	\end{align*}
	where we restricted $2sm+(2-m)n\geqslant 2+m$ to guarantee the nonnegativity of the power for $r^2t$ in the integral term, otherwise, a singularity will come as $r\to 0^+$. 
	\\ Let us use another approach to get the result when $2sm+(2-m)n< 2+m$ for $t\in(1,\infty)$. Setting a new variable $\omega=rt^{\frac{1}{2}}$, it holds that
	\begin{align}\label{G(t)bound}
	\ml{G}(t)\lesssim t^{-\frac{s-1}{2}-\frac{n(2-m)}{4m}}(\ml{I}(t))^{\frac{2-m}{2m}},
	\end{align}
	where the time-dependent function on the right-hand side is defined by
	\begin{align*}
	\ml{I}(t):=\ml{I}^{(1)}(t)+\ml{I}^{(2)}(t):=\left(\int_0^{t^{-1/\alpha}}+\int_{t^{-1/\alpha}}^{\infty}\right)\omega^{\frac{2sm+(2-m)n-(2+m)}{2-m}}|\sin(t^{1/2}\omega)|^{\frac{2m}{2-m}}\mathrm{e}^{-\frac{(\beta-\tau)m}{2-m}\omega^2}\mathrm{d}\omega.
	\end{align*}
	Here, we used WKB analysis to separate the integral over $(0,\infty)$ to $(0,t^{-1/\alpha})$ and $[t^{-1/\alpha},\infty)$ carrying a suitable positive constant $\alpha$ to be determined later. The choice of the parameter $\alpha$ is helpful for us to understand sharper estimates.\\
	To estimate $\ml{I}^{(1)}(t)$,  by the boundedness of $|\sin(y)/y|$, we obtain
	\begin{align}\label{Est_Ike_01}
	\ml{I}^{(1)}(t)&\lesssim t^{\frac{m}{2-m}}\int_0^{t^{-1/\alpha}}\left|\frac{\sin(t^{1/2}\omega)}{t^{1/2}\omega}\right|^{\frac{2m}{2-m}}\omega^{\frac{2sm+(2-m)n}{2-m}-1}\mathrm{e}^{-\frac{(\beta-\tau)m}{2-m}\omega^2}\mathrm{d}\omega\notag\\
	&\lesssim t^{\frac{m\alpha-2sm-(2-m)n+2-m}{(2-m)\alpha}}\int_{0}^{t^{-1/\alpha}}\mathrm{d}\omega\lesssim t^{\frac{m\alpha-2sm-(2-m)n}{(2-m)\alpha}},
	\end{align}
	where we observed $2sm+(2-m)n\geqslant (2-m)$ for any $s\in[0,\infty)$ and $m\in[1,2)$.\\
	To investigate the estimate for $\ml{I}^{(2)}(t)$, we divide the discussion into three cases. If $2sm+(2-m)n<2m$, then we may directly apply integration by parts to find
	\begin{align}\label{Est_Ike_02}
	\ml{I}^{(2)}(t)&\lesssim\int_{t^{-1/\alpha}}^{\infty}\omega^{\frac{2sm+(2-m)n-(2+m)}{2-m}}\mathrm{e}^{-\frac{(\beta-\tau)m}{2-m}\omega^2}\mathrm{d}\omega\notag\\
	&\lesssim\frac{2-m}{2sm+(2-m)n-2m}\left(\omega^{\frac{2sm+(2-m)n-2m}{2-m}}\mathrm{e}^{-\frac{(\beta-\tau)m}{2-m}\omega^2}\right)\Big|_{\omega=t^{-1/\alpha}}^{\omega=\infty}\notag\\
	&\quad+\frac{2(\beta-\tau)m}{2sm+(2-m)n-2m}\int_{t^{-1/\alpha}}^{\infty}\omega^{\frac{2sm+(2-m)n-2m}{2-m}+1}\mathrm{e}^{-\frac{(\beta-\tau)m}{2-m}\omega^2}\mathrm{d}\omega\notag\\
	&\lesssim t^{\frac{2m-2sm-(2-m)n}{(2-m)\alpha}}\mathrm{e}^{-\frac{(\beta-\tau)m}{2-m}t^{-2/\alpha}}-\int_{t^{-1/\alpha}}^{\infty}\omega^{\frac{2sm+(2-m)n+2-3m}{2-m}}\mathrm{e}^{-\frac{(\beta-\tau)m}{2-m}\omega^2}\mathrm{d}\omega\notag\\
	&\lesssim t^{\frac{2m-2sm-(2-m)n}{(2-m)\alpha}}.
	\end{align}
	By considering \eqref{Est_Ike_01} and \eqref{Est_Ike_02}, in the case $2sm+(2-m)n<2m$, we may obtain the sharp estimates
	\begin{align*}
	\ml{I}(t)\lesssim t^{\frac{m\alpha-2sm-(2-m)n}{(2-m)\alpha}}+t^{\frac{2m-2sm-(2-m)n}{(2-m)\alpha}}\lesssim t^{\frac{2m-2sm-(2-m)n}{2(2-m)}},
	\end{align*}
	providing that $m\alpha-2sm-(2-m)n=2m-2sm-(2-m)n$ if and only if $\alpha=2$.\\
	Let us turn to the case $2sm+(2-m)n=2m$. Therefore, by the similar procedure to the above, we estimate
	\begin{align}\label{Est_Ike_03}
	\ml{I}^{(2)}(t)&\lesssim\int_{t^{-1/\alpha}}^{\infty}\omega^{-1}\mathrm{e}^{-\frac{(\beta-\tau)m}{2-m}\omega^2}\mathrm{d}\omega\notag\\
	&\lesssim\left((\ln\omega)\mathrm{e}^{-\frac{(\beta-\tau)m}{2-m}\omega^2}\right)\Big|_{\omega=t^{-1/\alpha}}^{\omega=\infty}+\frac{2(\beta-\tau)m}{2-m}\int_{t^{-1/\alpha}}^{\infty}\omega|\ln\omega|\mathrm{e}^{-\frac{(\beta-\tau)m}{2-m}\omega^2}\mathrm{d}\omega\notag\\
	&\lesssim\frac{1}{\alpha}(\ln t)\mathrm{e}^{-\frac{(\beta-\tau)m}{2-m}t^{-2/\alpha}}+\int_{0}^{\infty}\omega|\ln\omega|\mathrm{e}^{-\frac{(\beta-\tau)m}{2-m}\omega^2}\mathrm{d}\omega\lesssim \ln t.
	\end{align}
	Then, by choosing $\alpha=2$ again, it follows from \eqref{Est_Ike_01} and \eqref{Est_Ike_03} that
	\begin{align*}
	\ml{I}(t)\lesssim t^{\frac{m\alpha-2sm-(2-m)n}{(2-m)\alpha}}+\ln t\lesssim \ln t,
	\end{align*}
	when $2sm+(2-m)n=2m$.\\
	In the remaindering case $2sm+(2-m)n>2m$, we found that
	\begin{align}\label{Est_Ike_04}
	\ml{I}^{(2)}(t)&\lesssim\int_{t^{-1/\alpha}}^{\infty}\omega^{\frac{2sm+(2-m)n-(2+m)}{2-m}}\mathrm{e}^{-\frac{(\beta-\tau)m}{2-m}\omega^2}\mathrm{d}\omega\notag\\
	&\lesssim t^{\frac{2+m-2sm-(2-m)n}{(2-m)\alpha}}\int_{0}^{\infty}\mathrm{e}^{-\frac{(\beta-\tau)m}{2-m}\omega^2}\mathrm{d}\omega\lesssim t^{\frac{2+m-2sm-(2-m)n}{(2-m)\alpha}}.
	\end{align}
	For the moment, we would like to remark that since
	\begin{align*}
	\int_0^{\infty}\mathrm{e}^{-\frac{(\beta-\tau)m}{2-m}\omega^2}\mathrm{d}\omega=\sqrt{\frac{\pi(2-m)}{2(\beta-\tau)m}},
	\end{align*}
	the restriction on the dissipative case, i.e. $\tau\in(0,\beta)$, acts a pivotal part in the way that
	\begin{align*}
	\lim\limits_{\tau\to\beta^-}\int_0^{\infty}\mathrm{e}^{-\frac{(\beta-\tau)m}{2-m}\omega^2}\mathrm{d}\omega=\infty,
	\end{align*}
	which somehow shows the limit case $\tau=\beta$ having singularities.\\
	Combining \eqref{Est_Ike_01} and \eqref{Est_Ike_04}, it yields
	\begin{align*}
	\ml{I}(t)\lesssim t^{\frac{m\alpha-2sm-(2-m)n}{(2-m)\alpha}}+t^{\frac{2+m-2sm-(2-m)n}{(2-m)\alpha}}\lesssim t^{\frac{m(2+m-2sm-(2-m)n)}{(2+m)(2-m)}},
	\end{align*}
	where we chose $\alpha=(2+m)/m$ to guarantee the optimality of the last competition.\\
	All in all, from \eqref{G(t)bound}, for $t\in(1,\infty)$ we assert that
	\begin{align*}
	\ml{G}(t)\lesssim\begin{cases}
	t^{1-s-\frac{n(2-m)}{2m}}&\mbox{if}\ \ 2sm+(2-m)n<2m,\\
	t^{\frac{1}{2}-\frac{s}{2}-\frac{n(2-m)}{4m}}(\ln t)^{\frac{2-m}{2m}}&\mbox{if}\ \ 2sm+(2-m)n=2m,\\
	t^{\frac{1}{2}-\frac{s}{2}-\frac{n(2-m)}{4m}+\frac{2+m-2sm-(2-m)n}{2(2+m)}}&\mbox{if}\ \ 2sm+(2-m)n>2m,
	\end{cases}
	\end{align*}
	in the case  $2sm+(2-m)n<2+m$.
	
	The estimates of solutions for bounded frequencies and large frequencies are exactly the  same as those in Theorem \ref{Thm_L2-L2_Est}. Thus, the summary of the derived estimates completes the proof.
\end{proof}

\section{Asymptotic profiles in a framework of weighted $L^1$ space}\label{Section_Asymptotic_Profile}
\subsection{Optimal estimates with weighted $L^1$ data}
In this subsection, we will derive asymptotic profiles for the linear MGT equation in the dissipative case in a framework of $L^{1,1}$ space, where
\begin{align*}
L^{1,1}(\mb{R}^n):=\left\{f\in L^1(\mb{R}^n):\ \|f\|_{L^{1,1}(\mb{R}^n)}:=\int_{\mb{R}^n}(1+|x|)|f(x)|\mathrm{d}x<\infty\right\}.
\end{align*}
As a preparation, we now define a time-dependent function 
\begin{align*}
\ml{D}_n(t):=\begin{cases}
t^{\frac{1}{2}}&\mbox{if}\ \ n=1,\\
(\ln t)^{\frac{1}{2}}&\mbox{if}\ \ n=2,\\
t^{-\frac{n-2}{4}}&\mbox{if} \ \ n\geqslant 3.
\end{cases}
\end{align*}
In order to derive asymptotic profiles of solutions, we will estimate upper bounds and lower bounds of the solution itself with $u_2\in L^2(\mb{R}^n)\cap L^{1,1}(\mb{R}^n)$.  Before processing these estimates, let us introduce the notation for the integral of $f(x)$ by $P_{f}:=\int_{\mb{R}^n}f(x)\mathrm{d}x$, and 
recall Lemma 2.1 from \cite{Ikehata2004}.
\begin{lemma}\label{Lemma_Ikehata_MMAS2004}
	Let us assume $f\in L^{1,1}(\mb{R}^n)$. Then, the following estimate holds:
	\begin{align*}
	|\hat{f}(\xi)|\leqslant C_1|\xi|\,\|f\|_{L^{1,1}(\mb{R}^n)}+|P_f|,
	\end{align*}
	with a positive constant $C_1>0$.
\end{lemma}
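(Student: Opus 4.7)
The plan is to decompose the Fourier transform into its "zero-frequency" part $P_f$ and a remainder that vanishes linearly in $|\xi|$, then bound the remainder using the weighted $L^1$ norm. Concretely, I would write
\begin{align*}
\hat{f}(\xi) = \int_{\mb{R}^n} e^{-ix\cdot\xi} f(x)\,\mathrm{d}x = \int_{\mb{R}^n}\bigl(e^{-ix\cdot\xi}-1\bigr)f(x)\,\mathrm{d}x + \int_{\mb{R}^n}f(x)\,\mathrm{d}x,
\end{align*}
so that the second term is exactly $P_f$. The triangle inequality then gives $|\hat{f}(\xi)| \leqslant |P_f| + R(\xi)$, where $R(\xi)$ is the modulus of the first integral.

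The core estimate is the elementary pointwise bound $|e^{-ix\cdot\xi}-1|\leqslant |x\cdot\xi|\leqslant |x|\,|\xi|$, which follows, say, from the mean value theorem applied to the real and imaginary parts of $\theta\mapsto e^{i\theta}$, or equivalently from $|e^{i\theta}-1|=2|\sin(\theta/2)|\leqslant |\theta|$. Inserting this into $R(\xi)$ yields
\begin{align*}
R(\xi) \leqslant |\xi| \int_{\mb{R}^n} |x|\,|f(x)|\,\mathrm{d}x \leqslant |\xi| \int_{\mb{R}^n}(1+|x|)|f(x)|\,\mathrm{d}x = |\xi|\,\|f\|_{L^{1,1}(\mb{R}^n)}.
\end{align*}
Combining the two bounds delivers the claimed inequality with constant $C_1=1$.

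There is no real obstacle here; the only things to verify are that $f\in L^{1,1}(\mb{R}^n)$ makes $P_f$ and the weighted integral both finite (which is immediate from the definition of $L^{1,1}$), and that the interchange of integration with the splitting above is justified, which follows trivially from absolute integrability. The lemma is essentially a quantified version of the statement that $\hat f$ is Lipschitz at the origin whenever $f$ has one absolute moment, and the proof is a one-line application of $|e^{i\theta}-1|\leqslant|\theta|$.
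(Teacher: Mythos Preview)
Your argument is correct and is exactly the standard proof: split off $P_f$ and control the remainder via $|e^{-ix\cdot\xi}-1|\leqslant|x|\,|\xi|$, which immediately gives the bound with $C_1=1$. The paper itself does not supply a proof but simply cites this as Lemma~2.1 of \cite{Ikehata2004}, where the same one-line computation appears, so there is nothing to compare.
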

Moreover, due to the support condition for $\chi_{\intt}(\xi)$, by minor modifications of some derived lemmas in \cite{Ikehata2014,IkehataOnodera2017}, one may show the validity of Lemma \ref{Lemma_Ikehata_JDE_02}. Or one may use the inequality
\begin{align*}
\|\chi_{\intt}(\xi)\hat{f}(t,|\xi|)\|_{L^2(\mb{R}^n)}\gtrsim \|\hat{f}(t,|\xi|)\|_{L^2(\mb{R}^n)}-\left(\|\chi_{\midd}(\xi)\hat{f}(t,|\xi|)\|_{L^2(\mb{R}^n)}+\|\chi_{\extt}(\xi)\hat{f}(t,|\xi|)\|_{L^2(\mb{R}^n)}\right)
\end{align*}
for $t\gg1$, where $\hat{f}(t,|\xi|)=|\sin(|\xi|t)|\mathrm{e}^{-c|\xi|^2t}/|\xi|$ or $\hat{f}(t,|\xi|)=|\cos(|\xi|t)|\mathrm{e}^{-c|\xi|^2t}$ and estimates
\begin{align*}
\|\chi_{\midd}(\xi)\hat{f}(t,|\xi|)\|_{L^2(\mb{R}^n)}+\|\chi_{\extt}(\xi)\hat{f}(t,|\xi|)\|_{L^2(\mb{R}^n)}\lesssim \mathrm{e}^{-c_0t},
\end{align*}
with a suitable constant $c_0>0$, to prove the next lemma. 
\begin{lemma}\label{Lemma_Ikehata_JDE_02}
	Let $n\geqslant 1$. The following estimates hold:
	\begin{align*}
	\ml{D}_n(t)\lesssim\left\|\chi_{\intt}(\xi)\frac{|\sin(|\xi|t)|}{|\xi|}\mathrm{e}^{-c|\xi|^2t}\right\|_{L^2(\mb{R}^n)}&\lesssim\ml{D}_n(t),\\
	t^{-\frac{n}{4}}\lesssim\left\|\chi_{\intt}(\xi)|\cos(|\xi|t)|\mathrm{e}^{-c|\xi|^2t}\right\|_{L^2(\mb{R}^n)} &\lesssim t^{-\frac{n}{4}},
	\end{align*}
	with $c>0$, for $t\gg1$.
\end{lemma}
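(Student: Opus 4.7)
The plan is to pass to polar coordinates, reducing each squared $L^2$ norm to a one-dimensional radial integral
\begin{align*}
\bigl\|\chi_{\intt}(\xi)\tfrac{|\sin(|\xi|t)|}{|\xi|}\mathrm{e}^{-c|\xi|^2t}\bigr\|_{L^2(\mb{R}^n)}^{2}&\sim\int_0^{\varepsilon}r^{n-3}\sin^2(rt)\,\mathrm{e}^{-2cr^2t}\,\mathrm{d}r,\\
\bigl\|\chi_{\intt}(\xi)|\cos(|\xi|t)|\mathrm{e}^{-c|\xi|^2t}\bigr\|_{L^2(\mb{R}^n)}^{2}&\sim\int_0^{\varepsilon}r^{n-1}\cos^2(rt)\,\mathrm{e}^{-2cr^2t}\,\mathrm{d}r,
\end{align*}
and then extract the leading asymptotics. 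The lower bounds will be produced by the approach sketched in the paragraph immediately preceding the lemma: estimate the full-space analogues (over all of $\mb{R}^n$) and subtract the middle- and large-frequency contributions, both of which are bounded by $C\mathrm{e}^{-c_0 t}$ and hence negligible compared with $\ml{D}_n(t)$ and $t^{-n/4}$ for $t\gg 1$.

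For the cosine integral I would first write $\cos^2(rt)=\tfrac{1}{2}+\tfrac{1}{2}\cos(2rt)$. The non-oscillating Gaussian piece is of exact order $t^{-n/2}$ (two-sided) under the rescaling $\sigma=r\sqrt{t}$, whereas the oscillating piece $\int_0^{\varepsilon}r^{n-1}\cos(2rt)\mathrm{e}^{-2cr^2t}\mathrm{d}r$ gains an additional $t^{-1/2}$ factor via a single integration by parts in $r$ and is therefore strictly lower order. Taking square roots yields both bounds of order $t^{-n/4}$.

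For the sine integral I would split the radial range at $r=1/t$. On $(0,1/t)$ the two-sided estimate $\tfrac{2}{\pi}rt\le|\sin(rt)|\le rt$ reduces the integrand to a constant multiple of $r^{n-1}t^{2}\mathrm{e}^{-2cr^2t}$, contributing order $t$ when $n=1$ and a bounded quantity when $n\ge 2$. On $(1/t,\varepsilon)$ the substitution $u=rt$ transforms the integral into $t^{2-n}\int_{1}^{\varepsilon t}u^{n-3}\sin^2(u)\mathrm{e}^{-2cu^2/t}\mathrm{d}u$, after which the Gaussian factor effectively truncates the range at $u\sim\sqrt{t}$. For $n\ge 3$ the weight $u^{n-3}$ is locally integrable at $0$ and the rescaling $\sigma=r\sqrt{t}$ gives leading order $t^{-(n-2)/2}$ with positive constant $\tfrac{1}{2}\int_0^{\infty}\sigma^{n-3}\mathrm{e}^{-2c\sigma^2}\mathrm{d}\sigma$. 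For $n=2$ the inner integral becomes $\int_1^{\sqrt{t}}u^{-1}\sin^2(u)\mathrm{d}u$ plus Gaussian tails of lower order, and the identity $\sin^2(u)=\tfrac{1}{2}-\tfrac{1}{2}\cos(2u)$ combined with the elementary bound $\int_1^{N}u^{-1}\cos(2u)\mathrm{d}u=O(1)$ uniformly in $N$ produces exactly the leading contribution $\tfrac{1}{4}\ln t$.

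The main technical obstacle is the borderline case $n=2$: one cannot simply separate $\sin^2(rt)=\tfrac{1}{2}-\tfrac{1}{2}\cos(2rt)$ and handle each piece on its own, because $\int_{0}^{\varepsilon}\tfrac{1}{2r}\mathrm{d}r$ diverges at the origin. The logarithmic behaviour emerges only after isolating the region $(0,1/t)$ (where the cancellation between $\sin^2(rt)$ and $r^{-2}$ is exact) and then passing to $u=rt$ on the remainder. Once this delicate balance is secured, every remaining estimate reduces to either a Gaussian scaling argument or an oscillatory integral controlled by one integration by parts, and the same analysis applied on all of $\mb{R}^n$ (minus the exponentially small middle- and large-frequency contributions) delivers the matching lower bounds.
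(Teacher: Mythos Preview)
Your proposal is correct and aligns with the paper's own treatment. The paper does not give a self-contained proof of this lemma: it simply refers to minor modifications of lemmas in \cite{Ikehata2014,IkehataOnodera2017} for the two-sided bounds, and for the lower bounds sketches exactly the approach you describe---estimate the full-space $L^2$ norm and subtract the exponentially decaying middle- and large-frequency pieces. Your radial computations (the $\cos^2=\tfrac{1}{2}+\tfrac{1}{2}\cos$ splitting with one integration by parts for the oscillatory remainder, and the $r=1/t$ cut for the sine integral with the careful $n=2$ logarithmic analysis) are precisely the content of those cited lemmas, so you have essentially unpacked what the paper leaves as a citation. There is no genuine methodological difference.
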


Let us state our result on asymptotic profiles of the solution. Particularly, in one and two-dimensional cases, we can easily observe the glow-up properties of the solution $u(t,\cdot)$ in the $L^2$ norm for the linear MGT equation in the dissipative case with initial data belonging to $L^2\cap L^{1,1}$. 
\begin{theorem}\label{THM_Asymptoti_Profiles}
	Let $\tau\in(0,\beta)$. Let us assume $u_2\in L^2(\mb{R}^n)\cap L^{1,1}(\mb{R}^n)$ and $|P_{u_2}|\neq0$. Then, the solution $u=u(t,x)$ to the Cauchy problem \eqref{Linear_MGT_Dissipative} fulfills the following estimates:
	\begin{align*}
	\ml{D}_n(t)|P_{u_2}|\lesssim\|u(t,\cdot)\|_{L^2(\mb{R}^n)}&\lesssim \ml{D}_n(t)\|u_2\|_{L^2(\mb{R}^n)\cap L^{1,1}(\mb{R}^n)}
	\end{align*}
	for any $t\gg1$.
\end{theorem}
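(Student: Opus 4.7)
\medskip

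\noindent\textbf{Proof proposal.}
The plan is to pass to the Fourier side via Parseval, localize to the three frequency zones $Z_{\intt}(\varepsilon)$, $Z_{\midd}(\varepsilon,N)$, $Z_{\extt}(N)$, and exploit the pointwise estimates of Proposition \ref{Prop_Pointwise_Estimate}. The middle- and external-frequency parts contribute at most $\mathrm{e}^{-ct}\|u_2\|_{L^2(\mb{R}^n)}$, which is strictly dominated by $\ml{D}_n(t)$ as $t\to\infty$ (even for $n=1,2$); hence only the small-frequency regime is relevant. There, the decomposition $\hat{u}_2(\xi)=P_{u_2}+(\hat{u}_2(\xi)-P_{u_2})$ coming from Lemma \ref{Lemma_Ikehata_MMAS2004}, with $|\hat{u}_2(\xi)-P_{u_2}|\lesssim|\xi|\|u_2\|_{L^{1,1}(\mb{R}^n)}$, will isolate the $P_{u_2}$-driven leading profile from a harmless remainder.

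For the upper bound, I would plug (\ref{Est_Small_Fouier}) into $\|\chi_{\intt}(D)u(t,\cdot)\|_{L^2(\mb{R}^n)}$ and split according to the above decomposition of $\hat{u}_2$. The dominant piece
\begin{equation*}
\left\|\chi_{\intt}(\xi)\frac{|\sin(|\xi|t)|}{|\xi|}\mathrm{e}^{-\frac{\beta-\tau}{2}|\xi|^2 t}\right\|_{L^2(\mb{R}^n)}|P_{u_2}|
\end{equation*}
is controlled by the upper half of Lemma \ref{Lemma_Ikehata_JDE_02}, giving $\lesssim\ml{D}_n(t)|P_{u_2}|$. In the piece paired with $\hat{u}_2(\xi)-P_{u_2}$, the extra $|\xi|$ cancels the $|\xi|^{-1}$ in the sine kernel, reducing it to $\|\chi_{\intt}(\xi)|\sin(|\xi|t)|\mathrm{e}^{-c|\xi|^2 t}\|_{L^2(\mb{R}^n)}\lesssim t^{-n/4}$, which is $\lesssim\ml{D}_n(t)$ for every $n\geqslant 1$. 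The cosine contributions are of order $t^{-n/4}$, and the $\mathrm{e}^{-t/\tau}$ contribution is exponentially small, so all yield the claimed upper bound.

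The lower bound is the main obstacle and requires a sharper description of $\widehat{K}(t,\xi)$ in $Z_{\intt}(\varepsilon)$. Inserting the expansions $\lambda_{1,2}(|\xi|)=\pm i|\xi|-\frac{\beta-\tau}{2}|\xi|^2+\ml{O}(|\xi|^3)$ and $\lambda_3(|\xi|)=-\frac{1}{\tau}+\ml{O}(|\xi|^2)$ into the Cardano sum (\ref{Representation_Fourier_u}) and computing the prefactors $\prod_{k\neq j}(\lambda_j-\lambda_k)$, the crucial cancellation between the two oscillating branches $\lambda_{1,2}$ removes the apparent $1/|\xi|^2$ singularity and produces
\begin{equation*}
\widehat{K}(t,\xi)=\tau\,\frac{\sin(|\xi|t)}{|\xi|}\,\mathrm{e}^{-\frac{\beta-\tau}{2}|\xi|^2 t}\bigl(1+\ml{O}(|\xi|)\bigr)+R(t,\xi),\qquad|R(t,\xi)|\lesssim\mathrm{e}^{-c|\xi|^2 t}+\mathrm{e}^{-t/\tau},
\end{equation*}
with $R$ now bounded (no $|\xi|^{-1}$ singularity). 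Substituting the $P_{u_2}$-decomposition of $\hat{u}_2$ and using the reverse triangle inequality,
\begin{equation*}
\|u(t,\cdot)\|_{L^2(\mb{R}^n)}\geqslant \tau\left\|\chi_{\intt}(\xi)\frac{\sin(|\xi|t)}{|\xi|}\mathrm{e}^{-\frac{\beta-\tau}{2}|\xi|^2 t}\right\|_{L^2(\mb{R}^n)}|P_{u_2}|-\text{(remainders)}.
\end{equation*}
The leading term is bounded below by $\gtrsim\ml{D}_n(t)|P_{u_2}|$ via the lower half of Lemma \ref{Lemma_Ikehata_JDE_02}. All remainders (the $\ml{O}(|\xi|)$ correction inside the leading kernel, the contribution of $R$ paired with $P_{u_2}$, every term involving $\hat{u}_2(\xi)-P_{u_2}$, and the middle/external frequency contributions) are bounded by $t^{-n/4}\|u_2\|_{L^2(\mb{R}^n)\cap L^{1,1}(\mb{R}^n)}$ or exponential decay, all strictly dominated by $\ml{D}_n(t)$ for $t\gg 1$; since $|P_{u_2}|\neq 0$ they can be absorbed for $t$ large. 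The decisive step is the explicit kernel expansion with the cancellation that eliminates the $|\xi|^{-1}$ behavior in $R$, because only then is the leading oscillatory term cleanly separated and the lower bound accessible from Lemma \ref{Lemma_Ikehata_JDE_02}.
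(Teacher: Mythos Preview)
Your proposal is correct and follows essentially the same route as the paper: Fourier localization via Proposition \ref{Prop_Pointwise_Estimate}, low-frequency kernel expansion from the eigenvalue asymptotics, the $P_{u_2}$-decomposition of $\hat{u}_2$ through Lemma \ref{Lemma_Ikehata_MMAS2004}, and the two-sided estimates of Lemma \ref{Lemma_Ikehata_JDE_02} for the sine profile. The only difference is presentational: the paper introduces an explicit intermediate approximant $J(t,|\xi|)$ and carries out the error bounds $|I_j-J_j|\lesssim(|\xi|^2t+|\xi|)\mathrm{e}^{-c|\xi|^2t}$ term by term before applying a nested reverse triangle inequality, whereas you compress this into the single expansion $\widehat{K}=\tau\,|\xi|^{-1}\sin(|\xi|t)\,\mathrm{e}^{-\frac{\beta-\tau}{2}|\xi|^2t}(1+\ml{O}(|\xi|))+R$ and subtract all remainders at once; since $|\xi|^2t\,\mathrm{e}^{-c|\xi|^2t}\lesssim\mathrm{e}^{-c'|\xi|^2t}$, your bound on $R$ indeed absorbs the paper's error terms.
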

\begin{remark}
	According to Theorem \ref{THM_Asymptoti_Profiles} and concerning $t\gg1$, we may observe that the decay rate for the estimates of $\|u(t,\cdot)\|_{L^2(\mb{R}^n)}$ from the above and the below are the same for any $n\geqslant 1$. Moreover, $u_2\in L^{1,1}(\mb{R}^n)$ implies $|P_{u_2}|<\infty$ for $n\geqslant 1$.  Namely, the decay estimates stated in Theorem \ref{THM_Asymptoti_Profiles} are optimal for all spatial dimensions in a framework of weighted $L^1$ space.
\end{remark}
\begin{proof}
	Initially, let us estimate upper bounds of solutions by modifying the estimate for small frequencies. The philosophy of derivative is essentially the same as those in Theorem \ref{Thm_Lm-L2_Est}. By applying Lemma \ref{Lemma_Ikehata_MMAS2004} and Proposition \ref{Prop_Pointwise_Estimate}, we arrive at
	\begin{align*}
	\chi_{\mathrm{int}}(\xi)|\hat{u}(t,\xi)|&\lesssim\chi_{\mathrm{int}}(\xi)\left(\left(|\xi|\,|\cos(|\xi|t)|+|\sin(|\xi|t)|\right)\mathrm{e}^{-\frac{\beta-\tau}{2}|\xi|^2t}+|\xi|\mathrm{e}^{-\frac{1}{\tau}t}\right)\|u_2\|_{L^{1,1}(\mb{R}^n)}\\
	&\quad+\chi_{\mathrm{int}}(\xi)\left(\left(|\cos(|\xi|t)|+\frac{|\sin(|\xi|t)|}{|\xi|}\right)\mathrm{e}^{-\frac{\beta-\tau}{2}|\xi|^2t}+\mathrm{e}^{-\frac{1}{\tau}t}\right)|P_{u_2}|.
	\end{align*}
	Clearly, for the sake of the polar co-ordinate transform, we may deduce
	\begin{align*}
	&\left\|\chi_{\mathrm{int}}(\xi)\left(\left(|\xi|\,|\cos(|\xi|t)|+|\sin(|\xi|t)|\right)\mathrm{e}^{-\frac{\beta-\tau}{2}|\xi|^2t}+|\xi|\mathrm{e}^{-\frac{1}{\tau}t}\right)\right\|_{L^2(\mb{R}^n)}\\
	&\lesssim\left(\int_0^{\varepsilon}r^{n+1}|\cos(rt)|^2\mathrm{e}^{-(\beta-\tau)r^2t}\mathrm{d}r\right)^{\frac{1}{2}}+\left(\int_0^{\varepsilon}r^{n-1}|\sin(rt)|^2\mathrm{e}^{-(\beta-\tau)r^2t}\mathrm{d}r\right)^{\frac{1}{2}}+\mathrm{e}^{-\frac{1}{\tau}t}\\
	&\lesssim t^{-\frac{n+2}{4}}+ t^{-\frac{n}{4}}+\mathrm{e}^{-\frac{1}{\tau}t}\lesssim t^{-\frac{n}{4}}
	\end{align*}
	for $t\gg1$. Repeating the same procedure as those in Theorem \ref{Thm_Lm-L2_Est}, we have
	\begin{align}\label{Improve_L11=0}
	\|\chi_{\mathrm{int}}(D)u(t,\cdot)\|_{L^2(\mb{R}^n)}\lesssim t^{-\frac{n}{4}}\|u_2\|_{L^{1,1}(\mb{R}^n)}+\ml{D}_n(t)|P_{u_2}|
	\end{align}
	for $t\gg 1$. In the case for bounded and large frequencies, we just need to use the same estimates as those in Theorem \ref{Thm_L2-L2_Est}. Finally, by using the fact that $	|P_{u_2}|\leqslant \|u_2\|_{L^{1,1}(\mb{R}^n)}$, we are able to prove upper bound estimates for the solution itself in the $L^2$ norm.
	
	Let us now turn to lower bound estimates. According to the study in Section \ref{Section_Linear_MGT}, we may represent the solution for small frequencies by
	\begin{align*}
	\chi_{\mathrm{int}}(\xi)\hat{u}(t,\xi)&=\chi_{\mathrm{int}}(\xi)I(t,|\xi|)\hat{u}_2(\xi)\\
	&=\chi_{\mathrm{int}}(\xi)(I_1(t,|\xi|)+I_2(t,|\xi|)+I_3(t,|\xi|))\hat{u}_2(\xi),
	\end{align*}
	where for the sake of convenience in the proof, we denoted $I(t,|\xi|):=\widehat{K}(t,\xi)$ which was shown in \eqref{Representation_Fourier_u}, furthermore, we introduced
	\begin{align*}
	I_{1,2}(t,|\xi|)&:=\frac{\exp\left(\left(\pm i|\xi|-\frac{\beta-\tau}{2}|\xi|^2+\ml{O}(|\xi|^3)\right)t\right)}{\left(\pm 2i|\xi|+\ml{O}(|\xi|^3)\right)\left(\frac{1}{\tau}\pm i|\xi|-\frac{3}{2}(\beta-\tau)|\xi|^2+\ml{O}(|\xi|^3)\right)},\\
	I_3(t,|\xi|)&:=\frac{\exp\left(\left(-\frac{1}{\tau}+(\beta-\tau)|\xi|^2+\ml{O}(|\xi|^3)\right)t\right)}{
		\left(\frac{1}{\tau}+i|\xi|-\frac{3}{2}(\beta-\tau)|\xi|^2+\ml{O}(|\xi|^3)\right)\left(\frac{1}{\tau}-i|\xi|-\frac{3}{2}(\beta-\tau)|\xi|^2+\ml{O}(|\xi|^3)\right)}.
	\end{align*}
	By omitting the terms containing $\ml{O}(|\xi|^3)$, we may regard next three functions as the leading term of $I_j(t,|\xi|)$ for small frequencies:
	\begin{align*}
	J_{1,2}(t,|\xi|)&:=\frac{\exp\left(\left(\pm i|\xi|-\frac{\beta-\tau}{2}|\xi|^2\right)t\right)}{\pm 2i|\xi|\left(\frac{1}{\tau}\pm i|\xi|-\frac{3}{2}(\beta-\tau)|\xi|^2\right)},\\
	J_3(t,|\xi|)&:=\frac{\exp\left(\left(-\frac{1}{\tau}+(\beta-\tau)|\xi|^2\right)t\right)}{\left(\frac{1}{\tau}+i|\xi|-\frac{3}{2}(\beta-\tau)|\xi|^2\right)\left(\frac{1}{\tau}-i|\xi|-\frac{3}{2}(\beta-\tau)|\xi|^2\right)},
	\end{align*}
	respectively, whose sum can be shown by
	\begin{align}\label{Defn_J}
	J(t,|\xi|)&:=\sum\limits_{k=1,2,3}J_k(t,|\xi|)=\frac{\mathrm{e}^{-\frac{\beta-\tau}{2}|\xi|^2t}}{\ml{T}_0^2(|\xi|)+|\xi|^2}\left(\frac{\sin(|\xi|t)}{|\xi|}\ml{T}_0(|\xi|)+\mathrm{e}^{-\frac{1}{\tau}t+\frac{3}{2}(\beta-\tau)|\xi|^2t}-\cos(|\xi|t)\right).
	\end{align}
	where we recalled \eqref{Defn_T0}. Now, we should be carefully analyze that the error estimates between the leading term $J_j(t,|\xi|)$ the formulas $I_j(t,|\xi|)$ for $j=1,2,3$, individually. It proves the additional decay estimates. Concerning the case for $J_1(t,|\xi|)$, denoting
	\begin{align*}
	g_1(|\xi|):=\frac{1}{\tau}+i|\xi|-\frac{3}{2}(\beta-\tau)|\xi|^2=\ml{O}(1)\ \ \mbox{as}\ \ |\xi|\to 0,
	\end{align*}
	we may handle
	\begin{align*}
	\chi_{\mathrm{int}}(\xi)|I_1(t,|\xi|)-J_1(t,|\xi|)|&\lesssim\chi_{\mathrm{int}}(\xi)\mathrm{e}^{-\frac{\beta-\tau}{2}|\xi|^2t}\left|\frac{\mathrm{e}^{\ml{O}(|\xi|^3)t}}{2i|\xi|g_1(|\xi|)+\ml{O}(|\xi|^3)}-\frac{1}{2i|\xi|g_1(|\xi|)}\right|\\
	&\lesssim\chi_{\mathrm{int}}(\xi)\mathrm{e}^{-\frac{\beta-\tau}{2}|\xi|^2t}\left|\frac{2i|\xi|g_1(|\xi|)\left(\mathrm{e}^{\ml{O}(|\xi|^3)t}-1\right)+\ml{O}(|\xi|^3)}{4|\xi|^2(g_1(|\xi|))^2+\ml{O}(|\xi|^4)}\right|\\
	&\lesssim\chi_{\mathrm{int}}(\xi)\mathrm{e}^{-\frac{\beta-\tau}{2}|\xi|^2t}\frac{1}{|\xi|^2}\left(\ml{O}(|\xi|^4)t\int_0^1\mathrm{e}^{\ml{O}(|\xi|^3)ts}\mathrm{d}s-\ml{O}(|\xi|^3)\right)\\
	&\lesssim\chi_{\mathrm{int}}(\xi)\left(\ml{O}(|\xi|^2)t\mathrm{e}^{-c|\xi|^2t}+\ml{O}(|\xi|)\mathrm{e}^{-c|\xi|^2t}\right),
	\end{align*}
	since there exists a constant $c>0$ such that
	\begin{align*}
	\chi_{\intt}(\xi)\mathrm{e}^{-\frac{\beta-\tau}{2}|\xi|^2t}\int_0^1\mathrm{e}^{\ml{O}(|\xi|^3)ts}\mathrm{d}s\lesssim\chi_{\intt}(\xi)\mathrm{e}^{-\frac{\beta-\tau}{4}|\xi|^2t}\,\mathrm{e}^{-\frac{1}{4}((\beta-\tau)-\ml{O}(|\xi|))|\xi|^2t} \lesssim \chi_{\intt}(\xi)\mathrm{e}^{-c|\xi|^2t}.
	\end{align*}
	Next, by repeating the same way as the previous one, we get
	\begin{align*}
	\chi_{\mathrm{int}}(\xi)|I_2(t,|\xi|)-J_2(t,|\xi|)|\lesssim\chi_{\mathrm{int}}(\xi)\left(\ml{O}(|\xi|^2)t\mathrm{e}^{-c|\xi|^2t}+\ml{O}(|\xi|)\mathrm{e}^{-c|\xi|^2t}\right).
	\end{align*}
	Considering the last term, by defining
	\begin{align*}
	g_2(|\xi|):=\left(\frac{1}{\tau}-\frac{3}{2}(\beta-\tau)|\xi|^2\right)^2+|\xi|^2=\ml{O}(1)\ \ \mbox{as}\ \ |\xi|\to 0,
	\end{align*} one has
	\begin{align*}
	\chi_{\mathrm{int}}(\xi)|I_3(t,|\xi|)-J_3(t,|\xi|)|&\lesssim\chi_{\mathrm{int}}(\xi)\mathrm{e}^{-\frac{1}{\tau}t+(\beta-\tau)|\xi|^2t}\left|\frac{\mathrm{e}^{\ml{O}(|\xi|^3)t}}{g_2(|\xi|)+\ml{O}(|\xi|^3)}-\frac{1}{g_2(|\xi|)}\right|\\
	&\lesssim \chi_{\mathrm{int}}(\xi)\mathrm{e}^{-\frac{1}{\tau}t+(\beta-\tau)|\xi|^2t}\left|\frac{g_2(|\xi|)\ml{O}(|\xi|^3)t\int_0^1\mathrm{e}^{\ml{O}(|\xi|^3)ts}\mathrm{d}s-\ml{O}(|\xi|^3)}{(g_2(|\xi|))^2+\ml{O}(|\xi|^3)}\right|\\
	&\lesssim\chi_{\mathrm{int}}(\xi)\mathrm{e}^{-ct}\ml{O}(|\xi|^3)t\lesssim \chi_{\mathrm{int}}(\xi)\mathrm{e}^{-ct}
	\end{align*}
	for any $t\gg 1$, which still provides us an exponential decay.\\
	In conclusion, we can claim
	\begin{align}\label{Est_001}
	\chi_{\mathrm{int}}(\xi)|(I(t,|\xi|)-J(t,|\xi|))\hat{u}_2(\xi)|\lesssim \chi_{\mathrm{int}}(\xi)\left( \ml{O}(|\xi|^2)t+\ml{O}(|\xi|)\right)\mathrm{e}^{-c|\xi|^2t}|\hat{u}_2(\xi)|.
	\end{align}
	Let us decompose initial data by
	\begin{align*}
	\hat{u}_2(\xi)=P_{u_2}+A(\xi)-iB(\xi),
	\end{align*}
	where
	\begin{align*}
	A(\xi):=\int_{\mb{R}^n}u_2(x)(1-\cos(x\cdot\xi))\mathrm{d}x\ \ \mbox{and}\ \ B(\xi):=\int_{\mb{R}^n}u_2(x)\sin(x\cdot\xi)\mathrm{d}x. 
	\end{align*}
	In the view of Lemma 2.2 in \cite{Ikehata2014}, these $\xi$-dependent functions can be controlled by
	\begin{align*}
	|A(\xi)|+|B(\xi)|\lesssim |\xi|\,\|u_2\|_{L^{1,1}(\mb{R}^n)}.
	\end{align*}
	As a consequence, we may represent the solution in the Fourier space by
	\begin{align*}
	\hat{u}(t,\xi)=I(t,|\xi|)P_{u_2}+(A(\xi)-iB(\xi))I(t,|\xi|).
	\end{align*}
	From the derived estimate \eqref{Est_001}, it yields
	\begin{align}\label{Chen_00001}
	&\left\|\chi_{\mathrm{int}}(D)u(t,\cdot)-\chi_{\mathrm{int}}(D)\ml{F}_{\xi\to x}^{-1}(J(t,|\xi|))P_{u_2}\,\right\|_{L^2(\mb{R}^n)}\notag\\
	&\leqslant\left\|\chi_{\mathrm{int}}(\xi)(I(t,|\xi|)-J(t,|\xi|))\right\|_{L^2(\mb{R}^n)}|P_{u_2}|+\left\|\chi_{\mathrm{int}}(\xi)(A(\xi)-iB(\xi))I(t,|\xi|)\right\|_{L^2(\mb{R}^n)}\notag\\
	&\lesssim\left\|\chi_{\mathrm{int}}(\xi)\left(|\xi|^2t+|\xi|\right)\mathrm{e}^{-c|\xi|^2t}\right\|_{L^2(\mb{R}^n)}|P_{u_2}|+\|\chi_{\mathrm{int}}(\xi)|\xi|I(t,|\xi|)\|_{L^2(\mb{R}^n)}\|u_2\|_{L^{1,1}(\mb{R}^n)}\notag\\
	&\lesssim t^{-\frac{n}{4}}\|u_2\|_{L^{1,1}(\mb{R}^n)}
	\end{align}
	for $t\gg1$, where we used
	\begin{align*}
	\|\chi_{\mathrm{int}}(\xi)|\xi|I(t,|\xi|)\|_{L^2(\mb{R}^n)}\lesssim\left\|\chi_{\mathrm{int}}(\xi)(|\xi|+1)\mathrm{e}^{-\frac{\beta-\tau}{2}|\xi|^2t}\right\|_{L^2(\mb{R}^n)}\lesssim t^{-\frac{n}{4}}.
	\end{align*}
	Additionally, let us recall the function $J(t,|\xi|)$ in \eqref{Defn_J}. By employing Lemma \ref{Lemma_Ikehata_JDE_02} and the  Parseval equality, it is valid that
	\begin{align*}
	&\left\|\chi_{\mathrm{int}}(D)\ml{F}_{\xi\to x}^{-1}(J(t,|\xi|))\right\|_{L^2(\mb{R}^n)}\\
	&\gtrsim\left\|\chi_{\intt}(\xi)\left(\frac{\sin(|\xi|t)}{|\xi|}\ml{T}_0(|\xi|)+\mathrm{e}^{-\frac{1}{\tau}t+\frac{3}{2}(\beta-\tau)|\xi|^2t}-\cos(|\xi|t)\right)\mathrm{e}^{-\frac{\beta-\tau}{2}|\xi|^2t}\right\|_{L^2(\mb{R}^n)}\\
	&\gtrsim\left|\left\|\chi_{\intt}(\xi)\ml{H}(t,|\xi|)\mathrm{e}^{-\frac{\beta-\tau}{2}|\xi|^2t}\right\|_{L^2(\mb{R}^n)}-\left\|\chi_{\intt}(\xi)\cos(|\xi|t)\mathrm{e}^{-\frac{\beta-\tau}{2}|\xi|^2t}\right\|_{L^2(\mb{R}^n)}\right|\\
	&\gtrsim|
	\ml{D}_n(t)-t^{-\frac{n}{4}}|\gtrsim\ml{D}_n(t)
	\end{align*}
	for $t\gg1$, where we denoted
	\begin{align*}
	\ml{H}(t,|\xi|):=\frac{\sin(|\xi|t)}{|\xi|}\ml{T}_0(|\xi|)+\mathrm{e}^{-\frac{1}{\tau}t+\frac{3}{2}(\beta-\tau)|\xi|^2t},
	\end{align*}
	moreover, we used upper bound estimates as follows:
	\begin{align*}
	\left\|\chi_{\intt}(\xi)\ml{H}(t,|\xi|)\mathrm{e}^{-\frac{\beta-\tau}{2}|\xi|^2t}\right\|_{L^2(\mb{R}^n)}\lesssim \ml{D}_n(t)
	\end{align*}
	and estimates from the below such that
	\begin{align*}
	&\left\|\chi_{\intt}(\xi)\ml{H}(t,|\xi|)\mathrm{e}^{-\frac{\beta-\tau}{2}|\xi|^2t}\right\|_{L^2(\mb{R}^n)}\\
	&\gtrsim\left|\left\|\chi_{\intt}(\xi)\mathrm{e}^{-\frac{1}{\tau}t+(\beta-\tau)|\xi|^2t}\right\|_{L^2(\mb{R}^n)}-\left\|\chi_{\intt}(\xi)\frac{|\sin(|\xi|t)|}{|\xi|}\mathrm{e}^{-\frac{\beta-\tau}{2}|\xi|^2t}\right\|_{L^2(\mb{R}^n)}\right|\\
	&\gtrsim |\mathrm{e}^{-ct}-\ml{D}_n(t)|\gtrsim\ml{D}_n(t)
	\end{align*}
	for any $n\geqslant 1$ and $t\gg1$.\\
	Finally, by using the Minkowski inequality, we conclude
	\begin{align*}
	\|\chi_{\mathrm{int}}(D)u(t,\cdot)\|_{L^2(\mb{R}^n)}&\geqslant\left\|\chi_{\mathrm{int}}(D)\ml{F}_{\xi\to x}^{-1}(J(t,|\xi|))\right\|_{L^2(\mb{R}^n)}|P_{u_2}|\\
	&\quad-\left\|\chi_{\mathrm{int}}(D)u(t,\cdot)-\chi_{\mathrm{int}}(D)\ml{F}_{\xi\to x}^{-1}(J(t,|\xi|))P_{u_2}\,\right\|_{L^2(\mb{R}^n)}\\
	&\gtrsim \ml{D}_n(t)|P_{u_2}|-t^{-\frac{n}{4}}\|u_2\|_{L^{1,1}(\mb{R}^n)}
	\end{align*}
	for $t\gg1$. Actually, in the above by taking $t\gg1$, the time-dependent coefficients of $|P_{u_2}|$ play dominant influence for all $n\geqslant1$. Thus, with the help of the fact that 
	\begin{align*}
	\|u(t,\cdot)\|_{L^2(\mb{R}^n)}\geqslant \|\chi_{\mathrm{int}}(D)u(t,\cdot)\|_{L^2(\mb{R}^n)}\gtrsim
	\ml{D}_n(t)|P_{u_2}|,
	\end{align*}
	the proof is complete.
\end{proof}
\subsection{Approximate relation in one- and two-dimensional cases}\label{Subsec_Approx_Relation}
Our purpose in this part is to give an approximate relation between the linear MGT equation and the linear viscoelastic damped wave equation (or the strongly damped wave equation). This approximate relation is strongly related to the so-called diffusion phenomenon (see, for example, \cite{Nishihara2003}), which bridges a connection for the damped wave equation and the heat equation such that
\begin{align*}
\begin{cases}
\tau u^{\mathrm{dw}}_{tt}-\Delta u^{\mathrm{dw}}+u^{\mathrm{dw}}_t=0,&x\in\mb{R}^n,\ t>0,\\
u^{\mathrm{dw}}(0,x)=0,\ u^{\mathrm{dw}}_t(0,x)=u^{\mathrm{dw}}_1(x),&x\in\mb{R}^n,
\end{cases}
\ \
\begin{aligned}
\Longrightarrow\ \ \\
\tau= 0
\end{aligned}
\ \ 
\begin{cases}
-\Delta v^{\mathrm{h}}+v^{\mathrm{h}}_t=0,&x\in\mb{R}^n,\ t>0,\\
v^{\mathrm{h}}(0,x)=u^{\mathrm{dw}}_1(x),&x\in\mb{R}^n.
\end{cases}
\end{align*}
It is well-known that the decay rates of $u^{\mathrm{dw}}(t,\cdot)$ and $v^{\mathrm{h}}(t,\cdot)$ in the $L^2$ norm are the same. Furthermore, the decay estimates of the difference
\begin{align*}
\left\|u^{\mathrm{dw}}(t,\cdot)-v^{\mathrm{h}}(t,\cdot)\right\|_{L^2(\mb{R}^n)}
\end{align*} 
is faster than the decay estimates for each of them in the $L^2$ norm. The gained decay rate is $(1+t)^{-1}$. Namely, diffusion phenomena bridge the connection between second-order (in time) evolution equations and first-order (in time) evolution equations.

Before giving our result, let us recall some derived estimates of solutions to the following linear Cauchy problem:
\begin{align}\label{Linear_Visco_Damped_Wave}
\begin{cases}
\tilde{u}_{tt}-\Delta\tilde{u}-\beta\Delta\tilde{u}_t=0,&x\in\mb{R}^n,\ t>0,\\
\tilde{u}(0,x)=0,\ \tilde{u}_t(0,x)=\tilde{u}_1(x),&x\in\mb{R}^n,
\end{cases}
\end{align}
where $\beta>0$. The Cauchy problem for the viscoelastic damped wave equation has been deeply studied in \cite{Shibata2000,IkehataTodorovaYordanov2013,DabbiccoReissig2014,Ikehata2014,IkehataNatsume,IkehataOnodera2017,BarreraVolkmer2019,BarreraVolkmer2020} and references therein. Particularly, in the paper \cite{Ikehata2014}, the author proved estimates of solutions to \eqref{Linear_Visco_Damped_Wave} as follows:
\begin{align}\label{Visco_Est}
\|\tilde{u}(t,\cdot)\|_{L^2(\mb{R}^n)}\lesssim\ml{D}_n(t)\|\tilde{u}_1\|_{L^2(\mb{R}^n)\cap L^{1,1}(\mb{R}^n)}
\end{align}
for $t\gg1$, providing that $|P_{\tilde{u}_1}|\neq0$. Concerning the Cauchy problem, we found that the estimates for the linear MGT equation \eqref{Linear_MGT_Dissipative} in Theorem \ref{THM_Asymptoti_Profiles}, and for the viscoelastic damped wave equation \eqref{Linear_Visco_Damped_Wave} in \eqref{Visco_Est}, are exactly the same. Therefore, we conjecture that behaviors of solutions for the linear MGT equation  are similar to those for the linear viscoelastic damped wave equation, especially the decay property. Furthermore, it becomes interesting to derive the approximate relation between them with suitable initial data, and to find a gained decay rate.

From the previous study, we know the decay rates of $(L^2\cap L^{1,1})-L^2$ estimates are determined by the behavior of the eigenvalues for small frequencies only. In the case for bounded and large frequencies, the behaviors of the eigenvalues together with the suitable regularity for initial data show immediately some exponential decays. For this reason, the next approximate relation is explained by the behavior of solutions localized in small frequency zone, which is the most interesting one.

\begin{theorem}\label{THM_APPROX}
	Let $\tau\in(0,\beta)$. Let us assume $u_2\in L^{1,1}(\mb{R}^n)$ and $|P_{u_2}|\neq0$. Then, the solution $u=u(t,x)$ to the Cauchy problem \eqref{Linear_MGT_Dissipative} and the solution $\tilde{u}=\tilde{u}(t,x)$ to the Cauchy problem \eqref{Linear_Visco_Damped_Wave} with $\tilde{u}_1(x)=u_2(x)$ fulfill the following estimates:
	\begin{align*}
	\left\|\chi_{\intt}(D)\left(u(t,\cdot)-\tau\tilde{u}(t,\cdot)\right)\right\|_{L^2(\mb{R}^n)}\lesssim t^{\frac{1}{2}-\frac{n}{4}}\|u_2\|_{L^{1,1}(\mb{R}^n)}
	\end{align*}
	for any $n\geqslant 1$ and $t\gg1$.
\end{theorem}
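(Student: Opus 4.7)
The plan is to carry out the whole analysis in the Fourier space restricted to the small frequency zone. Write the MGT multiplier as $I(t,|\xi|):=\widehat{K}(t,\xi)$ from \eqref{Representation_Fourier_u}, and introduce the viscoelastic multiplier
\[
\tilde I(t,|\xi|):=\frac{\mathrm{e}^{\mu_1 t}-\mathrm{e}^{\mu_2 t}}{\mu_1-\mu_2},\qquad \mu_{1,2}(|\xi|)=\pm i|\xi|-\tfrac{\beta}{2}|\xi|^2+\ml{O}(|\xi|^3),
\]
where $\mu_{1,2}$ are the two roots of $\mu^2+\beta|\xi|^2\mu+|\xi|^2=0$. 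Parseval's identity reduces the statement to bounding $\|\chi_{\intt}(\xi)(I-\tau\tilde I)\hat u_2\|_{L^2(\mb{R}^n)}$. The guiding idea is to replace $I$ and $\tilde I$ by their respective leading terms, carry out the subtraction, and exploit the mismatch between the exponential rates $(\beta-\tau)/2$ and $\beta/2$, which is precisely what produces the advertised gain of $t^{1/2}$ over the base rate $t^{-n/4}$ governing the individual solutions.

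Let $J(t,|\xi|)$ be the MGT leading term given in \eqref{Defn_J} and set
\[
\tilde J(t,|\xi|):=\frac{\sin(|\xi|t)}{|\xi|}\,\mathrm{e}^{-\frac{\beta}{2}|\xi|^2 t}.
\]
A Taylor-remainder computation of the same flavour as the one yielding \eqref{Est_001} produces
\[
\chi_{\intt}(\xi)\bigl|\tilde I(t,|\xi|)-\tilde J(t,|\xi|)\bigr|\lesssim \chi_{\intt}(\xi)\bigl(\ml{O}(|\xi|^2)t+\ml{O}(|\xi|)\bigr)\mathrm{e}^{-c|\xi|^2 t},
\]
so that, combined with \eqref{Est_001} and the decomposition $\hat u_2=P_{u_2}+A(\xi)-iB(\xi)$ from the proof of Theorem \ref{THM_Asymptoti_Profiles} (where $|A(\xi)|+|B(\xi)|\lesssim|\xi|\|u_2\|_{L^{1,1}(\mb{R}^n)}$), both $(I-J)\hat u_2$ and $\tau(\tilde I-\tilde J)\hat u_2$ already have $L^2$-norm $\lesssim t^{-n/4}\|u_2\|_{L^{1,1}(\mb{R}^n)}$. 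The central task is therefore the direct comparison of $J$ and $\tau\tilde J$: using the small-frequency expansions $\ml{T}_0/(\ml{T}_0^2+|\xi|^2)=\tau+\ml{O}(|\xi|^2)$ and $1/(\ml{T}_0^2+|\xi|^2)=\tau^2+\ml{O}(|\xi|^2)$, the representation \eqref{Defn_J} rearranges as
\[
J(t,|\xi|)=\tau\frac{\sin(|\xi|t)}{|\xi|}\,\mathrm{e}^{-\frac{\beta-\tau}{2}|\xi|^2 t}+R(t,|\xi|),
\]
where $R$ gathers the remaining $\cos(|\xi|t)\mathrm{e}^{-c|\xi|^2 t}$, $\mathrm{e}^{-t/\tau}$ and $\ml{O}(|\xi|^2)$ contributions and satisfies $\|\chi_{\intt}(\xi)R\|_{L^2(\mb{R}^n)}\lesssim t^{-n/4}$ by the estimates already developed for Lemma \ref{Lemma_Ikehata_JDE_02}.

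The leading mismatch
\[
\tau\frac{\sin(|\xi|t)}{|\xi|}\bigl(\mathrm{e}^{-\frac{\beta-\tau}{2}|\xi|^2 t}-\mathrm{e}^{-\frac{\beta}{2}|\xi|^2 t}\bigr)=\tau\frac{\sin(|\xi|t)}{|\xi|}\,\mathrm{e}^{-\frac{\beta-\tau}{2}|\xi|^2 t}\bigl(1-\mathrm{e}^{-\frac{\tau}{2}|\xi|^2 t}\bigr)
\]
is then controlled by the mean value theorem applied to the second exponential together with the key choice $|\sin(|\xi|t)|\leqslant 1$, producing the pointwise bound $\lesssim|\xi|t\,\mathrm{e}^{-c|\xi|^2 t}$. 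Parseval's identity and polar coordinates yield
\[
\left\|\chi_{\intt}(\xi)\,|\xi|t\,\mathrm{e}^{-c|\xi|^2 t}\right\|_{L^2(\mb{R}^n)}^2\lesssim t^2\int_0^{\varepsilon}r^{n+1}\mathrm{e}^{-2cr^2 t}\,\mathrm{d}r\lesssim t^{1-n/2},
\]
which after a square root produces the advertised $t^{1/2-n/4}|P_{u_2}|\leqslant t^{1/2-n/4}\|u_2\|_{L^{1,1}(\mb{R}^n)}$ contribution from pairing the leading mismatch with $P_{u_2}$. All remaining pieces — the corrections $(I-J)\hat u_2$, $\tau(\tilde I-\tilde J)\hat u_2$, $R\cdot P_{u_2}$, and every coupling against $A(\xi)-iB(\xi)$ (which contributes an extra factor of $|\xi|$) — are of order $t^{-n/4}\|u_2\|_{L^{1,1}(\mb{R}^n)}$ and are absorbed for $t\gg 1$. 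The main obstacle is precisely the sharpness at this decisive step: the alternative bound $|\sin(|\xi|t)|\leqslant|\xi|t$ would give only $\lesssim t\,\mathrm{e}^{-c|\xi|^2 t}$, whose $L^2$-norm is $\sim t^{1-n/4}$, and would wipe out the $t^{1/2}$ gain that distinguishes the approximate relation from the individual decay rates.
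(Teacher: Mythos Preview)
Your proposal is correct and follows essentially the same approach as the paper: both arguments decompose into approximation errors $(I-J)$, $(\tilde I-\tilde J)$ of order $t^{-n/4}$, the mean-zero part $A-iB$ contributing an extra $|\xi|$, and the decisive leading mismatch between $J$ and $\tau\tilde J$, which reduces to $\lesssim |\xi|\,t\,\mathrm{e}^{-c|\xi|^2 t}$ and yields the $t^{1/2-n/4}$ rate. The only cosmetic differences are that the paper invokes the prepackaged estimates \eqref{Chen_00001} and \eqref{Ikehta_00002} (Lemma 2.1 of \cite{Ikehata2014}) instead of rederiving the $\tilde I-\tilde J$ bound, and carries out the $J-\tau\tilde J$ algebra via the explicit identity for $\ml{T}_0\mathrm{e}^{\tau|\xi|^2 t/2}-\tau(\ml{T}_0^2+|\xi|^2)$ rather than first expanding $\ml{T}_0/(\ml{T}_0^2+|\xi|^2)=\tau+\ml{O}(|\xi|^2)$ as you do.
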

\begin{remark}
	By subtracting $\tau \tilde{u}(t,\cdot)$ in the $L^2$ norm, we find the derived estimates for $u(t,\cdot)$ in Theorem \ref{THM_Asymptoti_Profiles} can be improved $t^{-\frac{1}{4}}$ if $n=1$ and $(\ln t)^{-\frac{1}{2}}$ if $n=2$ for $t\gg1$. 
	It is still open that the gained decay rate for $n\geqslant 3$.
\end{remark}
\begin{remark}
	Indeed, from Theorems \ref{THM_Asymptoti_Profiles} and \ref{THM_APPROX}, one may derive
	\begin{align*}
	\|u(t,\cdot)-\tau\tilde{u}(t,\cdot)\|_{L^2(\mb{R}^n)}&=\|\chi_{\intt}(D)(u(t,\cdot)-\tau\tilde{u}(t,\cdot))\|_{L^2(\mb{R}^n)}+\|(1-\chi_{\intt}(D))(u(t,\cdot)-\tau\tilde{u}(t,\cdot))\|_{L^2(\mb{R}^n)}\\
	&\lesssim t^{\frac{1}{2}-\frac{n}{4}}\|u_2\|_{L^{1,1}(\mb{R}^n)}+\mathrm{e}^{-ct}\|u_2\|_{L^2(\mb{R}^n)}\lesssim t^{\frac{1}{2}-\frac{n}{4}}\|u_2\|_{L^2(\mb{R}^n)\cap L^{1,1}(\mb{R}^n)}
	\end{align*}
	for $t\geqslant t_0\gg 1$. Moreover, concerning $0\leqslant t\leqslant t_0$, it is trivial that
	\begin{align*}
	\|u(t,\cdot)-\tau\tilde{u}(t,\cdot)\|_{L^2(\mb{R}^n)}\lesssim \|u(t,\cdot)\|_{L^2(\mb{R}^n)}+\tau\|\tilde{u}(t,\cdot)\|_{L^2(\mb{R}^n)}\lesssim \|u_2\|_{L^2(\mb{R}^n)\cap L^{1,1}(\mb{R}^n)}.
	\end{align*}
	Therefore, the approximate relation holds for all $t\geqslant 0$ and the whole spaces such that
	\begin{align*}
	\|u(t,\cdot)-\tau\tilde{u}(t,\cdot)\|_{L^2(\mb{R}^n)}\lesssim (1+t)^{\frac{1}{2}-\frac{n}{4}}\|u_2\|_{L^2(\mb{R}^n)\cap L^{1,1}(\mb{R}^n)},
	\end{align*}
	where we assumed $u_2\in L^2(\mb{R}^n)\cap L^{1,1}(\mb{R}^n)$. Namely, the solution for the linear MGT equation approximate to that for the linear viscoelastic damped wave equation at least for $n=1,2$.
\end{remark}
\begin{proof}
	By applying the partial Fourier transform $\hat{\tilde{u}}(t,\xi)=\ml{F}_{x\to\xi}(\tilde{u}(t,x))$, let us recall the derived inequality stated in Lemma 2.1 in \cite{Ikehata2014} such that
	\begin{align}\label{Ikehta_00002}
	\left\|\chi_{\intt}(\xi)\left(\hat{\tilde{u}}(t,\xi)-\frac{\sin(|\xi|t)}{|\xi|}\mathrm{e}^{-\frac{\beta}{2}|\xi|^2t}P_{u_2}\right)\right\|_{L^2(\mb{R}^n)}\lesssim t^{-\frac{n}{4}}\|u_2\|_{L^{1,1}(\mb{R}^n)}
	\end{align}
	for $t\gg1$. Again, $\tilde{u}(t,x)$ is the solution to the viscoelastic damped wave equation \eqref{Linear_Visco_Damped_Wave} with initial data choosing by $\tilde{u}_1(x)=u_2(x)$.
	
	We notice that the difference of the solutions can be decomposed by three components as follows:
	\begin{align*}
	\hat{u}(t,\xi)-\tau\hat{\tilde{u}}(t,\xi)&=\left(\hat{u}(t,\xi)-J(t,|\xi|)P_{u_2}\right)+\left(J(t,|\xi|)-\tau\frac{\sin(|\xi|t)}{|\xi|}\mathrm{e}^{-\frac{\beta}{2}|\xi|^2t}\right)P_{u_2}\\
	&\quad+\left(\tau\frac{\sin(|\xi|t)}{|\xi|}\mathrm{e}^{-\frac{\beta}{2}|\xi|^2t}P_{u_2}-\tau\hat{\tilde{u}}(t,\xi)\right).
	\end{align*}
	Therefore, employing the  Parseval equality and  the norm inequality, we arrive at
	\begin{align*}
	&\left\|\chi_{\intt}(D)\left(u(t,\cdot)-\tau\tilde{u}(t,\cdot)\right)\right\|_{L^2(\mb{R}^n)}=\left\|\chi_{\intt}(\xi)\left(\hat{u}(t,\xi)-\tau\hat{\tilde{u}}(t,\xi)\right)\right\|_{L^2(\mb{R}^n)}\\
	&\lesssim \left\|\chi_{\intt}(\xi)\left(\hat{u}(t,\xi)-J(t,|\xi|)P_{u_2}\right)\right\|_{L^2(\mb{R}^n)}+\left\|\chi_{\intt}(\xi)\left(J(t,|\xi|)-\tau\frac{\sin(|\xi|t)}{|\xi|}\mathrm{e}^{-\frac{\beta}{2}|\xi|^2t}\right)\right\|_{L^2(\mb{R}^n)}|P_{u_2}|\\
	&\quad+\tau\left\|\chi_{\intt}(\xi)\left(\hat{\tilde{u}}(t,\xi)-\frac{\sin(|\xi|t)}{|\xi|}\mathrm{e}^{-\frac{\beta}{2}|\xi|^2t}P_{u_2}\right)\right\|_{L^2(\mb{R}^n)}\\
	&\lesssim t^{-\frac{n}{4}}\|u_2\|_{L^{1,1}(\mb{R}^n)}+\ml{J}(t)|P_{u_2}|,
	\end{align*}
	where we used \eqref{Chen_00001} and \eqref{Ikehta_00002}. In the above inequality, we denoted
	\begin{align*}
	\ml{J}(t):=\left\|\chi_{\intt}(\xi)\left(J(t,|\xi|)-\tau\frac{\sin(|\xi|t)}{|\xi|}\mathrm{e}^{-\frac{\beta}{2}|\xi|^2t}\right)\right\|_{L^2(\mb{R}^n)}.
	\end{align*}
	In other words, we just need to estimate $\ml{J}(t)$ in the remaindering part of the proof.
	
	Recalling \eqref{Defn_T0}, from the definition of $J(t)$ in the last subsection, we may estimate
	\begin{align*}
	\ml{J}(t)&\lesssim \left\|\chi_{\intt}(\xi) \frac{\mathrm{e}^{-\frac{\beta-\tau}{2}|\xi|^2t}}{\ml{T}_0^2(|\xi|)+|\xi|^2}\left(-\cos(|\xi|t)+\mathrm{e}^{-\frac{1}{\tau}t+\frac{3}{2}(\beta-\tau)|\xi|^2t}\right)\right\|_{L^2(\mb{R}^n)}\\
	&\quad+\left\|\chi_{\intt}(\xi)\frac{\sin(|\xi|t)}{|\xi|}\mathrm{e}^{-\frac{\beta}{2}|\xi|^2t}\left(\frac{\ml{T}_0(|\xi|)\mathrm{e}^{\frac{\tau}{2}|\xi|^2t}}{\ml{T}_0^2(|\xi|)+|\xi|^2}-\tau\right)\right\|_{L^2(\mb{R}^n)}\\
	&=:\ml{J}^{(1)}(t)+\ml{J}^{(2)}(t).
	\end{align*}
	For one thing, it is clear that
	\begin{align*}
	\ml{J}^{(1)}(t)\lesssim\left\|\chi_{\intt}(\xi)|\cos(|\xi|t)|\mathrm{e}^{-c|\xi|^2t}\right\|_{L^2(\mb{R}^n)}+\mathrm{e}^{-ct}\lesssim t^{-\frac{n}{4}}
	\end{align*}
	for $t\gg1$. Before estimating $\ml{J}^{(2)}(t)$, the explicit computation shows the identity as follows:
	\begin{align*}
	\ml{T}_0(|\xi|)\mathrm{e}^{\frac{\tau}{2}|\xi|^2t}-\tau\left(\ml{T}_0^2(|\xi|)+|\xi|^2\right)&=\ml{T}_0(|\xi|)\left(\mathrm{e}^{\frac{\tau}{2}|\xi|^2t}-1+\frac{3}{2}\tau(\beta-\tau)|\xi|^2\right)-\tau|\xi|^2\\
	&=\frac{\tau}{2}|\xi|^2t\,\ml{T}_0(|\xi|)\int_0^1\mathrm{e}^{\frac{\tau}{2}|\xi|^2ts}\mathrm{d}s+\tau\left(\ml{T}_0(|\xi|)\frac{3}{2}(\beta-\tau)-1\right)|\xi|^2.
	\end{align*}
	Thus, we compute
	\begin{align*}
	\ml{J}^{(2)}(t)&\lesssim t\left\|\chi_{\intt}(\xi)|\sin(|\xi|t)|\mathrm{e}^{-\frac{\beta}{2}|\xi|^2t}|\xi|\left|\ml{T}_0(|\xi|)\right|\int_0^1\mathrm{e}^{\frac{\tau}{2}|\xi|^2ts}\mathrm{d}s\right\|_{L^2(\mb{R}^n)}\\
	&\quad+\left\|\chi_{\intt}(\xi)|\sin(|\xi|t)|\mathrm{e}^{-\frac{\beta}{2}|\xi|^2t}\left|\ml{T}_0(|\xi|)\frac{3}{2}(\beta-\tau)-1\right||\xi|\right\|_{L^2(\mb{R}^n)}\\
	&\lesssim t\left\|\chi_{\intt}(\xi)|\xi|\,|\sin(|\xi|t)|\mathrm{e}^{-c|\xi|^2t}\right\|_{L^2(\mb{R}^n)}\lesssim t^{\frac{1}{2}-\frac{n}{4}}
	\end{align*}
	for $t\gg1$. Summarizing the derived estimates, one has
	\begin{align*}
	\left\|\chi_{\intt}(D)\left(u(t,\cdot)-\tau\tilde{u}(t,\cdot)\right)\right\|_{L^2(\mb{R}^n)}\lesssim t^{-\frac{n}{4}}\|u_2\|_{L^{1,1}(\mb{R}^n)}+t^{\frac{1}{2}-\frac{n}{4}}|P_{u_2}|,
	\end{align*}
	and the proof is immediately complete.
\end{proof}
\begin{remark}
	Although we can observe a similar decay property between the linear MGT equation \eqref{Linear_MGT_Dissipative} and the linear viscoelastic damped wave equation \eqref{Linear_Visco_Damped_Wave}, there is a great difference between them. Recalling that the property of finite propagation speed (FPS) is valid for the linear MGT equation in the conservative case, we refer to Section 2 in \cite{ChenPalmieri201901}. Actually, the property of FPS holds for the linear MGT equation even in the dissipative case (see Remark \ref{Rem_Finite_Prop_Speed}). To estimate the propagation speed, we construct an energy for \eqref{Linear_MGT_Dissipative}, namely,
	\begin{align*}
	\ml{E}_{\mathrm{FPS}}[u](t)&:=\beta\int_{\Lambda_{\beta,\tau}}\left|\nabla u_t(t,x)+\frac{1}{\beta}\nabla u(t,x)\right|^2\mathrm{d}x+\tau\int_{\Lambda_{\beta,\tau}}\left|u_{tt}(t,x)+\frac{1}{\beta}u_t(t,x)\right|^2\mathrm{d}x\\
	&\quad\,\,+\frac{1}{\beta}\left(1-\frac{\tau}{\beta}\right)\int_{\Lambda_{\beta,\tau}}|u_t(t,x)|^2\mathrm{d}x,
	\end{align*}
	where the domain is defined by
	\begin{align*}
	\Lambda_{\beta,\tau}:=\left\{(t,x):t\in[0,T],\ |x-x_0|\leqslant\sqrt{\beta/\tau}\,(T-t)\right\}.
	\end{align*}
	So, taking the derivative with respect to $t$, we arrive at
	\begin{align*}
	\frac{\mathrm{d}}{\mathrm{d} t}\ml{E}_{\mathrm{FPS}}[u](t)&=2\beta\int_{\Lambda_{\beta,\tau}}\nabla\left(u_t(t,x)+\frac{1}{\beta}u(t,x)\right)\cdot\nabla\left(u_{tt}(t,x)+\frac{1}{\beta}u_t(t,x)\right)\mathrm{d}x\\
	&\quad+2\tau\int_{\Lambda_{\beta,\tau}}\left(u_{tt}(t,x)+\frac{1}{\beta}u_t(t,x)\right)\left(u_{ttt}(t,x)+\frac{1}{\beta}u_{tt}(t,x)\right)\mathrm{d}x\\
	&\quad+\frac{2}{\beta}\left(1-\frac{\tau}{\beta}\right)\int_{\Lambda_{\beta,\tau}}u_{t}(t,x)u_{tt}(t,x)\mathrm{d}x\\
	&\quad-\beta\sqrt{\frac{\beta}{\tau}}\int_{\partial\Lambda_{\beta,\tau}}\left|\nabla u_t(t,x)+\frac{1}{\beta}\nabla u(t,x)\right|^2\mathrm{d}S-\tau\sqrt{\frac{\beta}{\tau}}\int_{\partial\Lambda_{\beta,\tau}}\left|u_{tt}(t,x)+\frac{1}{\beta}u_t(t,x)\right|^2\mathrm{d}S\\
	&\quad+\frac{1}{\beta}\sqrt{\frac{\beta}{\tau}}\left(1-\frac{\tau}{\beta}\right)\int_{\partial\Lambda_{\beta,\tau}}|u_t(t,x)|^2\mathrm{d}S.
	\end{align*}
	Integration by parts and the Cauchy-Schwarz inequality yield
	\begin{align*}
	&2\beta\int_{\Lambda_{\beta,\tau}}\nabla\left(u_t(t,x)+\frac{1}{\beta}u(t,x)\right)\cdot\nabla\left(u_{tt}(t,x)+\frac{1}{\beta}u_t(t,x)\right)\mathrm{d}x\\
	&\leqslant -2\beta\int_{\Lambda_{\beta,\tau}}\left(\Delta u_t(t,x)+\frac{1}{\beta}\Delta u(t,x)\right)\left(u_{tt}(t,x)+\frac{1}{\beta}u_t(t,x)\right)\mathrm{d}x\\
	&\quad+\beta\sqrt{\frac{\beta}{\tau}}\int_{\partial\Lambda_{\beta,\tau}}\left|\nabla u_t(t,x)+\frac{1}{\beta}\nabla u(t,x)\right|^2\mathrm{d}S+\beta\sqrt{\frac{\tau}{\beta}}\int_{\partial\Lambda_{\beta,\tau}}\left|u_{tt}(t,x)+\frac{1}{\beta}u_t(t,x)\right|^2\mathrm{d}S.
	\end{align*}
	Finally, summarizing them, one has
	\begin{align*}
	\frac{\mathrm{d}}{\mathrm{d}t}\ml{E}_{\mathrm{FPS}}[u](t)\leqslant-2\left(1-\frac{\tau}{\beta}\right)\int_{\Lambda_{\beta,\tau}}|u_{tt}(t,x)|^2\mathrm{d}x-\frac{1}{\sqrt{\beta\tau}}\int_{\partial\Lambda_{\beta,\tau}}|u_t(t,x)|^2\mathrm{d}S\leqslant0.
	\end{align*}
	In other words, for all $t\in[0,T]$, it is valid that $\ml{E}_{\mathrm{FPS}}[u](t)\leqslant \ml{E}_{\mathrm{FPS}}[u](0)=0$ if $u(0,x)=u_t(0,x)=u_{tt}(0,x)=0$ in a set $\Lambda_{\beta,\tau}^0:=\left\{|x-x_0|\leqslant\sqrt{\beta/\tau}\,T\right\}$. According to the definition of energy $\ml{E}_{\mathrm{FPS}}[u](t)$, it follows immediately that $u_t(t,x)=0$ and $\nabla u(t,x)=0$ in $\Lambda_{\beta,\tau}$. This implies $u(t,x)=0$ in $\Lambda_{\beta,\tau}$ due to $u(0,x)=0$ in $\Lambda_{\beta,\tau}^0$. The propagation speed reads as $\sqrt{\beta/\tau}$. In the limit case $\tau=\beta$, the propagation speed is equal to $1$, which corresponds to the statement of Section 2 in \cite{ChenPalmieri201901}. In particular, formally taking $\tau=0$, i.e. the viscoelastic damped wave equation, the propagation speed is infinite.  However, the property of FPS does not hold anymore in the linear viscoelastic damped wave equation, and the solution to \eqref{Linear_Visco_Damped_Wave} has some smoothing effects.
\end{remark}

\begin{remark}
	Actually, there is another aspect to analyze the linear MGT equation. Let us consider the conservative case ($\tau=\beta$) in the Cauchy problem \eqref{Linear_MGT_Dissipative}, whose local (in spaces) energy with $R>0$ can be defined by
	\begin{align*}
	E_{\mathrm{MGT},R}[u](t):=\frac{1}{2}\|\partial_t(\beta u_t(t,\cdot)+u(t,\cdot))\|_{L^2(B_R)}+\frac{1}{2}\|\nabla(\beta u_t(t,\cdot)+u(t,\cdot))\|_{L^2(B_R)}.
	\end{align*}
	Motivated by \cite{ChenPalmieri201901}, we may rewrite the Cauchy problem \eqref{Linear_MGT_Dissipative} with $\tau=\beta$ by the next way:
	\begin{align*}
	\begin{cases}
	(\beta u_t+u)_{tt}-\Delta (\beta u_t+u)=0,&x\in\mb{R}^n,\ t>0,\\
	(\beta u_t+u)(0,x)=0,\ (\beta u_t+u)_t(0,x)=\beta u_2(x),&x\in\mb{R}^n.
	\end{cases}
	\end{align*}
	Then, by applying Theorem 1.2 with $c(x)\equiv 1$, $n\geqslant 3$ and $L=0$ in the recent paper \cite{Charao-Ikehata-2020}, we can get
	\begin{align*}
	E_{\mathrm{MGT},R}[u](t)=\ml{O}(t^{-1})
	\end{align*}
	for each $R>0$ and $t\gg 1$, provided that $u_2\in L^1(\mb{R}^n)$ and 
	\begin{align*}
	\int_{\mb{R}^n}(1+|x|)|u_2(x)|^2\mathrm{d}x<\infty.
	\end{align*} 
	In other words, local (in spaces) energy for the linear MGT equation in the conservative case $\tau=\beta$ decays with an algebraic decay order.
\end{remark}

\section{Singular limit problem}\label{Sec_Singu_Limit}
In this section, we focus on the following Cauchy problem for the singular limit problem of the form:
\begin{align}\label{Singular_Limit_MGT}
\begin{cases}
\tau u_{\tau,ttt}+u_{\tau,tt}-\Delta u_{\tau}-\beta\Delta u_{\tau,t}=0,&x\in\mb{R}^n, \ t>0,\\
u_{\tau}(0,x)=u_0(x),\ u_{\tau,t}(0,x)=u_1(x),\ u_{\tau,tt}(0,x)=u_2(x),&x\in\mb{R}^n,
\end{cases}
\end{align}
where $\tau\in(0,\beta)$ with $\beta>0$. Moreover, the time-derivative for the unknown $u_{\tau}=u_{\tau}(t,x)$ is denoted by $u_{\tau,t}:=\partial_tu_{\tau}$, and similarly for $u_{\tau,tt}$ as well as $u_{\tau,ttt}$. Particularly, we consider $\tau$ to be a small parameter such that $0<\tau\ll \beta$. In other words, our main purpose in the section is to understand the asymptotic profiles of the solution $u_{\tau}=u_{\tau}(t,x)$ as $\tau\to0^+$. This property has been studied between damped wave equations and heat equations. We refer readers to \cite{Kisynski1963,Ikehata2003,Ikehata-Nishihara-2003,Chill-Haraux,Hashimoto-Yamazaki,Ghisi-Gobbino,Ikehata-Sobajima} and references therein. Nevertheless, concerning the study of the Cauchy problem for the linear MGT equation, it seems new from the knowledge of authors.

Let us introduce the Cauchy problem for the viscoelastic damped wave equation, namely,
\begin{align}\label{Singular_Limit_Visco_Elastic}
\begin{cases}
v_{tt}-\Delta v-\beta\Delta v_t=0,&x\in\mb{R}^n, \ t>0,\\
v(0,x)=u_0(x),\ v_t(0,x)=u_1(x),&x\in\mb{R}^n,
\end{cases}
\end{align}
where $\beta>0$. As mentioned in the last section, the Cauchy problem for the viscoelastic damped wave equation has been widely studied. For instance, considering Theorem 14.3.2 and Corollary 14.3.1 in the book \cite{Ebert-Reissig-book}, we know solutions to the Cauchy problem \eqref{Singular_Limit_Visco_Elastic} fulfill
\begin{align*}
\left\|\,|D|^kv(t,\cdot)\right\|_{L^2(\mb{R}^n)}^2&\leqslant C\left((1+t)^{-k}\|u_0\|_{H^k(\mb{R}^n)}^2+(1+t)^{-(k-1)}\|u_1\|_{H^{k-1}(\mb{R}^n)}^2\right)\ \ \mbox{for}\ \ k\geqslant 1,\\
\left\|\,|D|^kv_t(t,\cdot)\right\|_{L^2(\mb{R}^n)}^2&\leqslant C\left((1+t)^{-(k+1)}\|u_0\|_{H^{k}(\mb{R}^n)}^2+(1+t)^{-k}\|u_1\|_{H^{k}(\mb{R}^n)}^2\right)\ \ \ \ \, \mbox{for}\ \ k\geqslant 1.
\end{align*}
Therefore, it is easy to observe that
\begin{align}
\sum\limits_{j,k=1,2}\left\|\nabla^{j}\partial_t^kv(t,\cdot)\right\|_{L^2(\mb{R}^n)}^2&\leqslant\begin{cases}
C(1+t)^{-1}\|(u_0,u_1)\|_{H^4(\mb{R}^n)\times H^4(\mb{R}^n)}^2&\mbox{if}\ \ u_1\neq0,\\
C(1+t)^{-2}\|u_0\|_{H^4(\mb{R}^n)}^2&\mbox{if}\ \ u_1=0,
\end{cases}\label{Est_Vis_Ela_Sing}\\
\sum\limits_{j,k=0,1}\left\|\nabla^{1+j}\partial_t^{k}v(t,\cdot)\right\|_{L^2(\mb{R}^n)}^2&\leqslant\begin{cases}
C\|(u_0,u_1)\|_{H^2(\mb{R}^n)\times H^2(\mb{R}^n)}^2&\mbox{if}\ \ u_1\neq0,\\
C(1+t)^{-1}\|u_0\|_{H^2(\mb{R}^n)}^2&\mbox{if}\ \ u_1=0,
\end{cases}\label{Est_Vis_Ela_Sing_2}
\end{align}
where we employed $\|\nabla^jf(t,\cdot)\|_{L^2(\mb{R}^n)}\approx\|\,|D|^jf(t,\cdot)\|_{L^2(\mb{R}^n)}$.\\
Finally, let us define $w=w(t,x)$ such that
\begin{align}\label{Difference}
w(t,x):=u_{\tau}(t,x)-v(t,x),
\end{align}
where $u_{\tau}=u_{\tau}(t,x)$ is the solution to the Cauchy problem \eqref{Singular_Limit_MGT} and $v=v(t,x)$ is the solution to the Cauchy problem \eqref{Singular_Limit_Visco_Elastic}.

\subsection{Singular limit for an energy}
\begin{theorem}\label{THM_Singular_Limit}
	Let us assume $(u_0,u_1,u_2)\in H^4(\mb{R}^n)\times H^4(\mb{R}^n)\times L^2(\mb{R}^n)$, where $u_0$ and $u_1$ are not zero simultaneously. Then, the difference $w=w(t,x)$ defined in \eqref{Difference} fulfills the following estimates for small $\tau$ such that $0<\tau\ll \beta$:
	\begin{align*}
	&\ml{E}[w](t)+(2-\varepsilon_1-2\tau k)\int_0^t\|w_{\eta\eta}(\eta,\cdot)\|_{L^2(\mb{R}^n)}^2\mathrm{d}\eta+(2\beta k-\varepsilon_1-2)\int_0^t\|\nabla w_{\eta}(\eta,\cdot)\|_{L^2(\mb{R}^n)}^2\mathrm{d}\eta\\
	&\leqslant\tau\|u_2-\Delta u_0-\beta\Delta u_1\|_{L^2(\mb{R}^n)}^2+\begin{cases}
	C\tau^2\ln(\mathrm{e}+t)\|(u_0,u_1)\|_{H^4(\mb{R}^n)\times H^4(\mb{R}^n)}^2&\mbox{if}\ \ u_1\neq0,\\
	C\tau^2\|u_0\|_{H^4(\mb{R}^n)}^2&\mbox{if}\ \ u_1=0,
	\end{cases}
	\end{align*}
	where $C$ is a positive constant independent of $\tau$, and $k\in \left[\frac{2+\varepsilon_1}{2\beta},\frac{2-\varepsilon_1}{2\tau}\right]$ carrying  $\varepsilon_1\in\left(0,\frac{2\beta-2\tau}{\beta+\tau}\right]$. Moreover, the energy $\ml{E}[w](t)$ is defined by
	\begin{align*}
	\ml{E}[w](t)&:=\beta\left\|\nabla w_t(t,\cdot)+\frac{1}{\beta}\nabla w(t,\cdot)\right\|_{L^2(\mb{R}^n)}^2+\tau\|w_{tt}(t,\cdot)+kw_t(t,\cdot)\|_{L^2(\mb{R}^n)}^2\\
	&\quad\,\, +k(1-\tau k)\|w_t(t,\cdot)\|_{L^2(\mb{R}^n)}^2+\left(k-\frac{1}{\beta}\right)\|\nabla w(t,\cdot)\|_{L^2(\mb{R}^n)}^2.
	\end{align*}
\end{theorem}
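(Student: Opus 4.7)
The plan is to derive a perturbed MGT equation for $w=u_\tau-v$ with a small source, and then run a classical multiplier argument that naturally reproduces the quadratic form $\ml{E}[w]$. Subtracting \eqref{Singular_Limit_Visco_Elastic} from \eqref{Singular_Limit_MGT} gives
\begin{align*}
\tau w_{ttt}+w_{tt}-\Delta w-\beta\Delta w_t=-\tau v_{ttt},
\end{align*}
with $w(0,x)=0$, $w_t(0,x)=0$, and $w_{tt}(0,x)=u_2(x)-\Delta u_0(x)-\beta\Delta u_1(x)$; the last identity is obtained by evaluating \eqref{Singular_Limit_Visco_Elastic} at $t=0$. Substituting these data into $\ml{E}[w]$ collapses all but one square and yields $\ml{E}[w](0)=\tau\|u_2-\Delta u_0-\beta\Delta u_1\|_{L^2(\mb{R}^n)}^2$, which already accounts for the leading $\tau$-term on the right-hand side of the claim.

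For the multiplier step, I would multiply the perturbed equation by $w_{tt}+kw_t$ and integrate over $\mb{R}^n$. Using the pointwise identities $\tau w_{ttt}w_{tt}=\tfrac{\tau}{2}\partial_t|w_{tt}|^2$ and $\tau k w_{ttt}w_t=\tau k\partial_t(w_{tt}w_t)-\tau k|w_{tt}|^2$, together with one integration by parts in $x$ on each Laplacian term, a bookkeeping reorganisation produces
\begin{align*}
\tfrac{1}{2}\tfrac{\mathrm{d}}{\mathrm{d}t}\ml{E}[w](t)+(1-\tau k)\|w_{tt}(t,\cdot)\|_{L^2(\mb{R}^n)}^2+(\beta k-1)\|\nabla w_t(t,\cdot)\|_{L^2(\mb{R}^n)}^2=-\tau\!\int_{\mb{R}^n}v_{ttt}\left(w_{tt}+kw_t\right)\mathrm{d}x,
\end{align*}
where one checks, by expanding the squares $\beta\|\nabla w_t+\beta^{-1}\nabla w\|^2$ and $\tau\|w_{tt}+kw_t\|^2$, that the total-$\partial_t$ part of the identity is exactly $\tfrac{1}{2}\ml{E}[w]$. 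Non-negativity of $1-\tau k$, $\beta k-1$, $k(1-\tau k)$, and $k-1/\beta$ already forces the preliminary range $k\in[1/\beta,1/\tau]$; the sharper bracket in the statement will appear after the source is estimated.

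The main obstacle is controlling the right-hand integral. The $w_{\eta\eta}$-component is handled directly by Cauchy-Schwarz and Young's inequality,
\begin{align*}
\Bigl|-2\tau\int_0^t\!\!\int_{\mb{R}^n}v_{\eta\eta\eta}\,w_{\eta\eta}\,\mathrm{d}x\mathrm{d}\eta\Bigr|\leqslant \varepsilon_1\int_0^t\|w_{\eta\eta}(\eta,\cdot)\|_{L^2(\mb{R}^n)}^2\mathrm{d}\eta+\frac{\tau^2}{\varepsilon_1}\int_0^t\|v_{\eta\eta\eta}(\eta,\cdot)\|_{L^2(\mb{R}^n)}^2\mathrm{d}\eta,
\end{align*}
whereas the $w_\eta$-piece is delicate because the available dissipation controls $\|\nabla w_\eta\|_{L^2}$ and not $\|w_\eta\|_{L^2}$. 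The trick is to exploit \eqref{Singular_Limit_Visco_Elastic} a second time to rewrite $v_{ttt}=\Delta v_t+\beta\Delta v_{tt}$ and then integrate by parts in $x$:
\begin{align*}
-2\tau k\int_0^t\!\!\int_{\mb{R}^n}v_{\eta\eta\eta}\,w_{\eta}\,\mathrm{d}x\mathrm{d}\eta=2\tau k\int_0^t\!\!\int_{\mb{R}^n}\left(\nabla v_\eta+\beta\nabla v_{\eta\eta}\right)\cdot\nabla w_\eta\,\mathrm{d}x\mathrm{d}\eta,
\end{align*}
after which Young's inequality contributes $\varepsilon_1\int_0^t\|\nabla w_\eta\|_{L^2(\mb{R}^n)}^2\mathrm{d}\eta$ plus a remainder of order $\tau^2\varepsilon_1^{-1}\int_0^t(\|\nabla v_\eta\|_{L^2(\mb{R}^n)}^2+\|\nabla v_{\eta\eta}\|_{L^2(\mb{R}^n)}^2)\mathrm{d}\eta$.

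Finally, the remaining $v$-integrals fall into the scope of \eqref{Est_Vis_Ela_Sing}: since $\|v_{\eta\eta\eta}\|_{L^2}^2\lesssim\|\Delta v_\eta\|_{L^2}^2+\|\Delta v_{\eta\eta}\|_{L^2}^2$, all of them are majorised by $C(1+\eta)^{-1}\|(u_0,u_1)\|_{H^4(\mb{R}^n)\times H^4(\mb{R}^n)}^2$ when $u_1\neq0$ and by $C(1+\eta)^{-2}\|u_0\|_{H^4(\mb{R}^n)}^2$ when $u_1=0$, which upon time-integration produce the factors $\ln(\mathrm{e}+t)$ and a bounded constant respectively. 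Absorbing the two $\varepsilon_1$-pieces into the left-hand side converts the identity's coefficients $2(1-\tau k)$ and $2(\beta k-1)$ into $(2-\varepsilon_1-2\tau k)$ and $(2\beta k-\varepsilon_1-2)$, which are precisely those announced. Non-emptiness of the resulting interval $k\in[(2+\varepsilon_1)/(2\beta),(2-\varepsilon_1)/(2\tau)]$ amounts exactly to the restriction $\varepsilon_1\leqslant 2(\beta-\tau)/(\beta+\tau)$, closing the argument.
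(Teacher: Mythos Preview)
Your proposal is correct and follows essentially the same route as the paper. The only cosmetic difference is that you multiply the perturbed equation by the combined multiplier $w_{tt}+kw_t$ in one step, whereas the paper builds two auxiliary energies $\ml{E}_1[w]$ (from multiplier $w_{tt}$) and $\ml{E}_2[w]$ (from multiplier $w_t$) and then takes the linear combination $\ml{E}_1+k\ml{E}_2$; after regrouping, the resulting identity, the Young-inequality treatment of the source, and the use of \eqref{Est_Vis_Ela_Sing} coincide with yours.
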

\begin{remark}
	The assumption that $u_0$ and $u_1$ are not zero simultaneously, is natural to guarantee nontrivial solution for the viscoelastic damped wave equation \eqref{Singular_Limit_Visco_Elastic}.
\end{remark}
\begin{remark}\label{Rem_Indpendent}
	Actually, the choice of parameters $k$ and $\varepsilon_1$ can be independent of $\tau$ for a small value of $\tau>0$. For example, by taking a small $\tau$ such that $0<\tau\leqslant 39\beta/41$, we can fix $\varepsilon_1=1/20$ and $k=41/(40\beta)$. Particularly, by considering $\tau\to0^+$, we can immediately enlarge the choice of parameters $k$ and $\varepsilon_1$.
\end{remark}
\begin{remark}
	Let us consider $t\in(0,T)$. In Theorem \ref{THM_Singular_Limit} with $T<\infty$, we may observe
	\begin{itemize}
		\item if $u_2\neq\Delta u_0+\beta\Delta u_1$, it holds $\ml{E}[w](t)=\ml{O}(\tau)$ as $\tau\to0^+$;
		\item if $u_2=\Delta u_0+\beta\Delta u_1$, it holds $\ml{E}[w](t)=\ml{O}(\tau^2)$ as $\tau\to0^+$.
	\end{itemize}
	So, the speeds of convergence are different under different choices of initial data.
	However, concerning $T=\infty$, the property for singular limit holds if and only if $u_1=0$. Otherwise, we found that $\ml{E}[w](t)=\ml{O}(\ln(\mathrm{e}+t))$ as $t\to\infty$. In conclusion, the choice for initial data is extremely important in the consideration of singular limit property.
\end{remark}
\begin{proof}
	Let us act $\tau\partial_t$ on the equation in \eqref{Singular_Limit_Visco_Elastic} and then add itself to arrive at
	\begin{align}\label{New_Eq_Visco_Elastic}
	\tau v_{ttt}+v_{tt}-\Delta v-\beta\Delta v_t=\tau(\Delta v_t+\beta\Delta v_{tt}).
	\end{align}
	Let us recall $w=w(t,x)$ as a difference such that $w(t,x)=u_{\tau}(t,x)-v(t,x)$. Then, by subtracting the equation in \eqref{Singular_Limit_MGT} with \eqref{New_Eq_Visco_Elastic}, we have
	\begin{align}\label{New_Eq_Diff}
	\tau w_{ttt}+w_{tt}-\Delta w-\beta \Delta w_t=-\tau(\Delta v_t+\beta \Delta v_{tt}).
	\end{align}
	To achieve our aim, we next will apply the classical energy method for the Cauchy problem. For one thing, we construct an energy as follows:
	\begin{align*}
	\ml{E}_1[w](t):=\tau\|w_{tt}(t,\cdot)\|_{L^2(\mb{R}^n)}^2+\beta\|\nabla w_t(t,\cdot)\|_{L^2(\mb{R}^n)}^2-2\int_{\mb{R}^n}\Delta w(t,x)w_t(t,x)\mathrm{d}x.
	\end{align*}
	It shows that
	\begin{align*}
	\frac{\mathrm{d}}{\mathrm{d}t}\ml{E}_1[w](t)&=2\tau\int_{\mb{R}^n}w_{ttt}(t,x)w_{tt}(t,x)\mathrm{d}x+2\beta\int_{\mb{R}^n}\nabla w_{tt}(t,x)\cdot\nabla w_t(t,x)\mathrm{d}x\\
	&\quad-2\int_{\mb{R}^n}\Delta w_t(t,x)w_t(t,x)\mathrm{d}x-2\int_{\mb{R}^n}\Delta w(t,x)w_{tt}(t,x)\mathrm{d}x\\
	&=-2\|w_{tt}(t,\cdot)\|_{L^2(\mb{R}^n)}^2+2\|\nabla w_t(t,\cdot)\|_{L^2(\mb{R}^n)}^2-2\tau\int_{\mb{R}^n}(\Delta v_t(t,x)+\beta\Delta v_{tt}(t,x))w_{tt}(t,x)\mathrm{d}x,
	\end{align*}
	where we considered \eqref{New_Eq_Diff}.\\
	For another, let us introduce the other auxiliary energy
	\begin{align*}
	\ml{E}_2[w](t):=\|\nabla w(t,\cdot)\|_{L^2(\mb{R}^n)}^2+\|w_t(t,\cdot)\|_{L^2(\mb{R}^n)}^2+2\tau\int_{\mb{R}^n}w_{tt}(t,x)w_t(t,x)\mathrm{d}x.
	\end{align*}
	Taking the derivative with respect to time variable, we have
	\begin{align*}
	\frac{\mathrm{d}}{\mathrm{d}t}\ml{E}_2[w](t)&=2\int_{\mb{R}^n}\nabla w_t(t,x)\cdot\nabla w(t,x)\mathrm{d}x+2\int_{\mb{R}^n}w_{tt}(t,x)w_t(t,x)\mathrm{d}x\\
	&\quad+2\tau\int_{\mb{R}^n}w_{ttt}(t,x)w_t(t,x)\mathrm{d}x+2\tau\int_{\mb{R}^n}w_{tt}(t,x)w_{tt}(t,x)\mathrm{d}x\\
	&=-2\beta\|\nabla w_t(t,\cdot)\|_{L^2(\mb{R}^n)}^2+2\tau\|w_{tt}(t,\cdot)\|_{L^2(\mb{R}^n)}^2\\
	&\quad+2\tau\int_{\mb{R}^n}(\nabla v_t(t,x)+\beta\nabla v_{tt}(t,x))\cdot\nabla w_t(t,x)\mathrm{d}x.
	\end{align*}
	Let $k$ be a positive parameter to be fixed later. Hence, by applying
	\begin{align*}
	&-2\int_{\mb{R}^n}\Delta w(t,x)w_t(t,x)\mathrm{d}x=2\int_{\mb{R}^n}\nabla w(t,x)\cdot\nabla w_t(t,x)\mathrm{d}x\\
	&=\beta\left\|\frac{1}{\beta}\nabla w(t,\cdot)+\nabla w_t(t,\cdot)\right\|_{L^2(\mb{R}^n)}^2-\left(\frac{1}{\beta}\|\nabla w(t,\cdot)\|_{L^2(\mb{R}^n)}^2+\beta\|\nabla w_t(t,\cdot)\|_{L^2(\mb{R}^n)}^2\right)
	\end{align*}
	and
	\begin{align*}
	2\tau k\int_{\mb{R}^n}w_{tt}(t,x)w_t(t,x)\mathrm{d}x&=\tau\|w_{tt}(t,\cdot)+kw_t(t,\cdot)\|_{L^2(\mb{R}^n)}^2-\tau\left(\|w_{tt}(t,\cdot)\|_{L^2(\mb{R}^n)}^2+k^2\|w_t(t,\cdot)\|_{L^2(\mb{R}^n)}^2\right),
	\end{align*}
	one may rewrite the total energy by
	\begin{align*}
	\ml{E}_1[w](t)+k\ml{E}_2[w](t)&=\beta\left\|\nabla w_t(t,\cdot)+\frac{1}{\beta}\nabla w(t,\cdot)\right\|_{L^2(\mb{R}^n)}^2+\tau\|w_{tt}(t,\cdot)+kw_t(t,\cdot)\|_{L^2(\mb{R}^n)}^2\\
	&\quad +k(1-\tau k)\|w_t(t,\cdot)\|_{L^2(\mb{R}^n)}^2+\left(k-\frac{1}{\beta}\right)\|\nabla w(t,\cdot)\|_{L^2(\mb{R}^n)}^2.
	\end{align*}
	To guarantee the non-negativity of the above combined energy, we need to restrict $k\in\left[\frac{1}{\beta},\frac{1}{\tau}\right]$. Here, we should underline in advance that
	\begin{align*}
	\ml{E}_1[w](0)+k\ml{E}_2[w](0)=\tau\|u_2-\Delta u_0-\beta\Delta u_1\|_{L^2(\mb{R}^n)}^2,
	\end{align*}
	since $w(0,x)=w_t(0,x)=0$ and $w_{tt}(0,x)=u_2(x)-\Delta u_0(x)-\beta\Delta u_1(x)$.
	
	Furthermore, the application of Cauchy's inequality indicates that there exists a small constant $\varepsilon_1>0$ such that
	\begin{align*}
	&-2\tau\int_{\mb{R}^n}(\Delta v_{t}(t,x)+\beta \Delta v_{tt}(t,x))w_{tt}(t,x)\mathrm{d}x+2k\tau\int_{\mb{R}^n}(\nabla v_t(t,x)+\beta\nabla v_{tt}(t,x))\cdot\nabla w_t(t,x)\mathrm{d}x\\
	&\leqslant\frac{2\tau^2}{\varepsilon_1}\left(\|\Delta v_t(t,\cdot)\|_{L^2(\mb{R}^n)}^2+\beta^2\|\Delta v_{tt}(t,\cdot)\|_{L^2(\mb{R}^n)}^2+k^2\left(\|\nabla v_t(t,\cdot)\|_{L^2(\mb{R}^n)}^2+\beta^2\|\nabla v_{tt}(t,\cdot)\|_{L^2(\mb{R}^n)}^2\right)\right)\\
	&\quad+\varepsilon_1\left(\|w_{tt}(t,\cdot)\|_{L^2(\mb{R}^n)}^2+\|\nabla w_{t}(t,\cdot)\|_{L^2(\mb{R}^n)}^2\right).
	\end{align*}
	We summarize the derived inequalities, which lead to
	\begin{align*}
	&\frac{\mathrm{d}}{\mathrm{d}t}\left(\ml{E}_1[w](t)+k\ml{E}_2[w](t)\right)+(2-\varepsilon_1-2\tau k)\|w_{tt}(t,\cdot)\|_{L^2(\mb{R}^n)}^2+(2\beta k-\varepsilon_1-2)\|\nabla w_t(t,\cdot)\|_{L^2(\mb{R}^n)}^2\\
	&\leqslant \frac{2\tau^2}{\varepsilon_1}\left(\|\Delta v_t(t,\cdot)\|_{L^2(\mb{R}^n)}^2+k^2\|\nabla v_t(t,\cdot)\|_{L^2(\mb{R}^n)}^2+\beta^2\left(\|\Delta v_{tt}(t,\cdot)\|_{L^2(\mb{R}^n)}^2+k^2\|\nabla v_{tt}(t,\cdot)\|_{L^2(\mb{R}^n)}^2\right)\right).
	\end{align*}
	By choosing $k\in \left[\frac{2+\varepsilon_1}{2\beta},\frac{2-\varepsilon_1}{2\tau}\right]$, we found that $2-\varepsilon_1-2\tau k\geqslant 0$ and $2\beta k-\varepsilon_1-2\geqslant 0$.
	To guarantee the non-empty set of $k$, we restrict ourselves $\varepsilon_1\in\left(0,\frac{2\beta-2\tau}{\beta+\tau}\right]$.\\
	Using the derived $L^2$ estimates \eqref{Est_Vis_Ela_Sing}, we see
	\begin{align*}
	&\frac{\mathrm{d}}{\mathrm{d}t}\left(\ml{E}_1[w](t)+k\ml{E}_2[w](t)\right)+(2-\varepsilon_1-2\tau k)\|w_{tt}(t,\cdot)\|_{L^2(\mb{R}^n)}^2+(2\beta k\varepsilon_1-2)\|\nabla w_t(t,\cdot)\|_{L^2(\mb{R}^n)}^2\\
	&\leqslant
	\begin{cases}
	C\tau^2(1+t)^{-1}\|(u_0,u_1)\|_{H^4(\mb{R}^n)\times H^4(\mb{R}^n)}^2&\mbox{if}\ \ u_1\neq0,\\
	C\tau^2(1+t)^{-2}\|u_0\|_{H^4(\mb{R}^n)}^2&\mbox{if}\ \ u_1=0,
	\end{cases}
	\end{align*}
	where $C$ is a positive constant independent of $\tau$. Finally, integrating the above inequality over $[0,t]$, one gets our desired inequality.
\end{proof}

\subsection{Singular limit for the solution itself}
Let us turn to the single limit for the solution itself, which is not a trivial generalization of the last result because the $L^2$ norm for the solution itself is not included in an energy of the MGT equation in the dissipative case. Motivated by \cite{Ikehata2003,Ikehata-Nishihara-2003}, we will use Hardy's inequality associated with a new variable to overcome the difficulty.
\begin{theorem}
	Let $n\geqslant 3$. Let us assume $(u_0,u_1,u_2)\in H^2(\mb{R}^n)\times H^2(\mb{R}^n)\times L^{2}(\mb{R}^n)$ and additionally $|x|u_2\in L^2(\mb{R}^n)$, where $u_0$ and $u_1$ are not zero simultaneously. Then, the difference $w(t,x)$ defined in \eqref{Difference} fulfills the following estimates for small $\tau$ such that $0<\tau\ll\beta$:
	\begin{align*}
	&\bar{C}\|w(t,\cdot)\|_{L^2(\mb{R}^n)}^2+(2\tilde{k}\beta-\varepsilon_2-2)\int_0^t\|\nabla w(\eta,\cdot)\|_{L^2(\mb{R}^n)}^2\mathrm{d}\eta+(2-\varepsilon_2-2\tilde{k}\tau)\int_0^t\|w_{\eta}(\eta,\cdot)\|_{L^2(\mb{R}^n)}^2\mathrm{d}\eta\\
	&\leqslant C\tau^2\left(\|u_2\|_{L^2(\mb{R}^n)}^2+\|\,|x|u_2\|_{L^2(\mb{R}^n)}^2\right)+ \begin{cases}
	C\tau^2t\|(u_0,u_1)\|_{H^2(\mb{R}^n)\times H^2(\mb{R}^n)}^2&\mbox{if}\ \ u_1\neq0,\\
	C\tau^2\ln(\mathrm{e}+t)\|u_0\|_{H^2(\mb{R}^n)}^2&\mbox{if}\ \ u_1=0,
	\end{cases}
	\end{align*}
	where $C,\bar{C}$ are positive constants independent of $\tau$, and $\tilde{k}\in \left[\frac{2+\varepsilon_2}{2\beta},\frac{2-\varepsilon_2}{2\tau}\right]$ carrying  $\varepsilon_2\in\left(0,\frac{2\beta-2\tau}{\beta+\tau}\right]$ and $\varepsilon_2<2$. 
\end{theorem}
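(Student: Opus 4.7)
The plan is to reduce this result to the energy framework of Theorem \ref{THM_Singular_Limit} by passing to the time-primitive of $w$. Define $W(t,x):=\int_0^tw(\eta,x)\,\mathrm{d}\eta$, so that $W_t=w$, $W_{tt}=w_t$, $W_{ttt}=w_{tt}$ and $W(0,\cdot)=W_t(0,\cdot)=W_{tt}(0,\cdot)=0$. Integrating equation \eqref{New_Eq_Diff} in $t$ from $0$ to $t$, using $w_{tt}(0,x)=u_2-\Delta u_0-\beta\Delta u_1$ together with the viscoelastic PDE (to fold the residual $-\tau(\Delta v+\beta\Delta v_t)(t,x)$ back into $-\tau v_{tt}(t,x)$), I obtain
\begin{align*}
\tau W_{ttt}+W_{tt}-\Delta W-\beta\Delta W_t=\tau u_2(x)-\tau v_{tt}(t,x)=:\tilde{F}(t,x)
\end{align*}
with \emph{vanishing} initial data. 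Since $\|W_t(t,\cdot)\|_{L^2(\mb{R}^n)}^2=\|w(t,\cdot)\|_{L^2(\mb{R}^n)}^2$, controlling the lower-order energy of $W$ is tantamount to bounding $\|w(t,\cdot)\|_{L^2(\mb{R}^n)}^2$.

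I would then mimic the proof of Theorem \ref{THM_Singular_Limit} with the multipliers shifted down by one derivative: applied to the $W$-equation, $2W_{tt}=2w_t$ and $2W_t=2w$ play the roles previously played by $2w_{tt}$ and $2w_t$. The first produces
\begin{align*}
\ml{E}_1[W](t):=\tau\|w_t\|_{L^2(\mb{R}^n)}^2+2\int_{\mb{R}^n}\nabla W\cdot\nabla w\,\mathrm{d}x+\beta\|\nabla w\|_{L^2(\mb{R}^n)}^2,
\end{align*}
together with the identity $\tfrac{\mathrm{d}}{\mathrm{d}t}\ml{E}_1[W]+2\|w_t\|_{L^2}^2-2\|\nabla w\|_{L^2}^2=2\int\tilde{F}w_t\,\mathrm{d}x$; the second produces $\ml{E}_2[W](t):=2\tau\int w_tw\,\mathrm{d}x+\|w\|_{L^2}^2+\|\nabla W\|_{L^2}^2$ with $\tfrac{\mathrm{d}}{\mathrm{d}t}\ml{E}_2[W]-2\tau\|w_t\|_{L^2}^2+2\beta\|\nabla w\|_{L^2}^2=2\int\tilde{F}w\,\mathrm{d}x$. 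Completing squares in $\ml{E}_1[W]+\tilde{k}\ml{E}_2[W]$ as in Theorem \ref{THM_Singular_Limit} shows that it equals
\begin{align*}
\tau\|w_t+\tilde{k}w\|_{L^2}^2+\tilde{k}(1-\tau\tilde{k})\|w\|_{L^2}^2+\beta\bigl\|\nabla w+\tfrac{1}{\beta}\nabla W\bigr\|_{L^2}^2+\bigl(\tilde{k}-\tfrac{1}{\beta}\bigr)\|\nabla W\|_{L^2}^2,
\end{align*}
which is nonnegative for $\tilde{k}\in[\tfrac{1}{\beta},\tfrac{1}{\tau}]$ and bounded below by $\tilde{k}(1-\tau\tilde{k})\|w(t)\|_{L^2}^2$. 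Summing the two identities and rearranging yields, on the left-hand side, the coefficients $(2-2\tilde{k}\tau)\|w_t\|^2$ and $(2\tilde{k}\beta-2)\|\nabla w\|^2$, which after Cauchy absorption with a small $\varepsilon_2$ match the stated LHS structure exactly.

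After integrating from $0$ to $t$, the crux is estimating $2\int_0^t\int\tilde{F}(w_s+\tilde{k}w)\,\mathrm{d}x\,\mathrm{d}s$. Splitting $\tilde{F}=\tau u_2-\tau v_{ss}$, the $u_2$-pieces exploit that $u_2$ is time-independent and $w(0)=0$: $\int_0^t\!\int u_2w_s\,\mathrm{d}x\,\mathrm{d}s=\int u_2w(t)\,\mathrm{d}x$ and $\int_0^t\!\int u_2w\,\mathrm{d}x\,\mathrm{d}s=\int u_2W(t)\,\mathrm{d}x$. Cauchy's inequality on the first gives $C\tau^2\|u_2\|_{L^2}^2$ plus a $\delta\|w(t)\|_{L^2}^2$ absorbable into $\tilde{k}(1-\tau\tilde{k})\|w(t)\|^2$. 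For the second, \emph{Hardy's inequality} (valid for $n\geqslant3$) is essential: $|\int u_2W(t)\,\mathrm{d}x|\lesssim\||x|u_2\|_{L^2}\|W(t)/|x|\|_{L^2}\lesssim\||x|u_2\|_{L^2}\|\nabla W(t)\|_{L^2}$, and the resulting $\|\nabla W(t)\|^2$ factor is absorbed into the $(\tilde{k}-\tfrac{1}{\beta})\|\nabla W\|^2$ piece of the energy. For the $v_{ss}$-pieces, I would again use $v_{ss}=\Delta v+\beta\Delta v_s$ before pairing with $w$ (producing $\int\nabla v\cdot\nabla w$ and $\int\nabla v_s\cdot\nabla w$ terms absorbable against $\int_0^t\|\nabla w\|^2$), and a direct Cauchy-Schwarz estimate for the $v_{ss}w_s$-pairing; the resulting time integrals $\int_0^t(\|\nabla v\|^2+\|\nabla v_s\|^2+\|v_{ss}\|^2)\,\mathrm{d}s$ are then bounded by \eqref{Est_Vis_Ela_Sing_2} (together with $\|v_{ss}\|\lesssim\|\nabla^2 v\|+\|\nabla^2 v_s\|$ via the viscoelastic PDE), giving $Ct\|(u_0,u_1)\|_{H^2\times H^2}^2$ when $u_1\neq0$ and $C\ln(\mathrm{e}+t)\|u_0\|_{H^2}^2$ when $u_1=0$.

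The principal obstacle is precisely the time-independence of $u_2$: a naive Cauchy estimate on $\int_0^t\!\int u_2w_s$ or $\int_0^t\!\int u_2w$ would produce an unwanted factor of $t$ in front of $\|u_2\|^2$. The combination of the time-integration trick (exploiting $w(0)=0$) with Hardy's inequality—trading $\|W(t)\|_{L^2}$, which is not controlled by the energy, for $\|\nabla W(t)\|_{L^2}$, which is—is exactly what yields the $t$-independent $C\tau^2\|u_2\|_{L^2}^2$ and $C\tau^2\||x|u_2\|_{L^2}^2$ of the statement, and also forces the restrictions $n\geqslant3$ and $|x|u_2\in L^2(\mb{R}^n)$.
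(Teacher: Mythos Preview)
Your proposal is correct and follows essentially the same route as the paper: introduce the time-primitive $W=\int_0^t w\,\mathrm{d}\eta$, derive the inhomogeneous MGT equation for $W$ with source $\tau u_2-\tau(\Delta v+\beta\Delta v_t)$ (equivalently your $\tau u_2-\tau v_{tt}$), run the two-multiplier energy argument of Theorem \ref{THM_Singular_Limit} shifted down one order, exploit the time-independence of $u_2$ to convert $\int_0^t\!\int u_2(W_{ss}+\tilde{k}W_s)$ into boundary terms, and absorb $\int u_2 W(t)$ via Hardy's inequality against $(\tilde{k}-\tfrac{1}{\beta})\|\nabla W(t)\|_{L^2}^2$. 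The only cosmetic difference is that the paper keeps the source in the form $-\tau(\Delta v+\beta\Delta v_t)$ throughout and pairs it directly with $W_{tt}$ and (after one integration by parts) with $\nabla W_t$, whereas you recombine it as $-\tau v_{tt}$ for the $w_t$-pairing; the resulting $v$-norms and the use of \eqref{Est_Vis_Ela_Sing_2} are identical.
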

\begin{remark}
	In the case when $u_1\neq0$, by using Theorem 2.1 in \cite{Ikehata-2001},  we still can provide the estimate with $\ln(\mathrm{e}+t)$ rather than $t$, where we need to assume the additional condition $\|(1+|x|)(u_1-\Delta u_0)\|_{L^2(\mb{R}^n)}<\infty$.
\end{remark}
\begin{remark}
	In the remaindering case for $n=1,2$, we may use the integral formula $w(t,x)=\int_0^tw_{\eta}(\eta,x)\mathrm{d}\eta$ with $w(0,x)=0$. Then, by applying Minkowski's integral inequality and the derived inequality in Theorem \ref{THM_Singular_Limit}, we have
	\begin{align*}
	&\|w(t,\cdot)\|_{L^2(\mb{R}^n)}^2=\int_{\mb{R}^n}\left|\int_0^tw_{\eta}(\eta,x)\mathrm{d}\eta\right|^2\mathrm{d}x\leqslant\left(\int_0^t\|w_{\eta}(\eta,\cdot)\|_{L^2(\mb{R}^n)}\mathrm{d}\eta\right)^2\\
	&\leqslant C\tau t^2\|u_2-\Delta u_0-\beta\Delta u_1\|_{L^2(\mb{R}^n)}^2+\begin{cases}
	C\tau^2\left(\int_0^t(\ln(\mathrm{e}+\eta))^{\frac{1}{2}}\mathrm{d}\eta\right)^2\|(u_0,u_1)\|_{H^4(\mb{R}^n)\times H^4(\mb{R}^n)}^2&\mbox{if}\ \ u_1\neq0,\\
	C\tau^2t^2\|u_0\|_{H^4(\mb{R}^n)}^2&\mbox{if}\ \ u_1=0,
	\end{cases}
	\end{align*}
	providing that we assume $(u_0,u_1,u_2)\in H^4(\mb{R}^n)\times H^4(\mb{R}^n)\times L^2(\mb{R}^n)$.
\end{remark}
\begin{proof}
	To begin with the proof, we introduce  $W=W(t,x)$ fulfilling
	\begin{align*}
	W(t,x):=\int_0^tw(\eta,x)\mathrm{d}\eta.
	\end{align*}
	Then, carrying out direct computations, we find that the new variable $W(t,x)$ also fulfills a kind of inhomogeneous linear MGT equation in the dissipative case. Precisely, it holds
	\begin{align*}
	&\tau W_{ttt}+W_{tt}-\Delta W-\beta \Delta W_t=\tau w_{tt}+w_t-\int_0^t\Delta w(\eta,x)\mathrm{d}\eta-\beta\Delta w\\
	&=\tau w_{tt}+w_t-\int_0^t(\tau w_{\eta\eta\eta}+w_{\eta\eta}-\beta\Delta w_{\eta}+\tau(\Delta v_{\eta}+\beta\Delta v_{\eta\eta}))(\eta,x)\mathrm{d}\eta-\beta\Delta w,
	\end{align*}
	where we applied \eqref{New_Eq_Diff}. In other words, one has
	\begin{align}\label{NN_Eq_W_Diff}
	\tau W_{ttt}+W_{tt}-\Delta W-\beta \Delta W_t=\tau u_2-\tau\Delta v-\tau\beta\Delta v_t,
	\end{align}
	since $w(0,x)=w_t(0,x)=0$ and $w_{tt}(0,x)=u_2(x)-\Delta u_0(x)-\beta\Delta u_1(x)$.
	
	Let us set two auxiliary energies as follows:
	\begin{align*}
	\widetilde{\ml{E}}_1[W](t)&:=\tau\|W_{tt}(t,\cdot)\|_{L^2(\mb{R}^n)}^2+\beta\|\nabla W_t(t,\cdot)\|_{L^2(\mb{R}^n)}^2+2\int_{\mb{R}^n}\nabla W(t,x)\cdot\nabla W_t(t,x)\mathrm{d}x,\\
	\widetilde{\ml{E}}_2[W](t)&:=\|W_t(t,\cdot)\|_{L^2(\mb{R}^n)}^2+\|\nabla W(t,\cdot)\|_{L^2(\mb{R}^n)}^2+2\tau\int_{\mb{R}^n}W_{tt}(t,x)W_t(t,x)\mathrm{d}x.
	\end{align*}
	Clearly, from integration by parts and Cauchy's inequality, we see
	\begin{align*}
	&-2\tau\int_{\mb{R}^n}(\Delta v(t,x)+\beta\Delta v_t(t,x))W_{tt}(t,x)\mathrm{d}x-2\tilde{k}\tau\int_{\mb{R}^n}(\Delta v(t,x)+\beta\Delta v_t(t,x))W_t(t,x)\mathrm{d}x\\
	&\leqslant\frac{2\tau^2}{\varepsilon_2}\left(\|\Delta v(t,\cdot)\|_{L^2(\mb{R}^n)}^2+\tilde{k}^2\|\nabla v(t,\cdot)\|_{L^2(\mb{R}^n)}^2+\beta^2\left(\|\Delta v_t(t,\cdot)\|_{L^2(\mb{R}^n)}^2+\tilde{k}^2\|\nabla v_t(t,\cdot)\|_{L^2(\mb{R}^n)}^2\right)\right)\\
	&\quad+\varepsilon_2\left(\|W_{tt}(t,\cdot)\|_{L^2(\mb{R}^n)}^2+\|\nabla W_t(t,\cdot)\|_{L^2(\mb{R}^n)}^2\right),
	\end{align*}
	where we set $\varepsilon_2\in\left(0,\frac{2\beta-2\tau}{\beta+\tau}\right]$. Here, $\tilde{k}$ is a positive constant to be restricted later.
	We now apply the similar procedure to those in the proof of Theorem \ref{THM_Singular_Limit}, then from \eqref{NN_Eq_W_Diff} we may obtain
	\begin{align*}
	\widetilde{\ml{E}}_1[W](t)+\tilde{k}\widetilde{\ml{E}}_2[W](t)&=\beta\left\|\nabla W_t(t,\cdot)+\frac{1}{\beta}\nabla W(t,\cdot)\right\|_{L^2(\mb{R}^n)}^2+\tau\|W_{tt}(t,\cdot)+\tilde{k}W_{t}(t,\cdot)\|_{L^2(\mb{R}^n)}^2\\
	&\quad+\tilde{k}(1-\tau\tilde{k})\|W_t(t,\cdot)\|_{L^2(\mb{R}^n)}^2+\left(\tilde{k}-\frac{1}{\beta}\right)\|\nabla W(t,\cdot)\|_{L^2(\mb{R}^n)}^2,
	\end{align*}
	and
	\begin{align*}
	&\frac{\mathrm{d}}{\mathrm{d}t}\left(\widetilde{\ml{E}}_1[W](t)+\tilde{k}\widetilde{\ml{E}}_2[W](t)\right)\\
	&\leqslant(2\tilde{k}\tau+\varepsilon_2-2)\|W_{tt}(t,\cdot)\|_{L^2(\mb{R}^n)}^2+(2+\varepsilon_2-2\tilde{k}\beta)\|\nabla W_t(t,\cdot)\|_{L^2(\mb{R}^n)}^2\\
	&\quad+\frac{2\tau^2}{\varepsilon_2}\left(\|\Delta v(t,\cdot)\|_{L^2(\mb{R}^n)}^2+\tilde{k}^2\|\nabla v(t,\cdot)\|_{L^2(\mb{R}^n)}^2+\beta^2\left(\|\Delta v_t(t,\cdot)\|_{L^2(\mb{R}^n)}^2+\tilde{k}^2\|\nabla v_t(t,\cdot)\|_{L^2(\mb{R}^n)}^2\right)\right)\\
	&\quad+2\tau\frac{\mathrm{d}}{\mathrm{d}t}\left(\int_{\mb{R}^n}u_2(x)(W_t(t,x)+\tilde{k}W(t,x))\mathrm{d}x\right).
	\end{align*}
	Due to the estimates \eqref{Est_Vis_Ela_Sing_2}, we observe that
	\begin{align*}
	&\frac{\mathrm{d}}{\mathrm{d}t}\left(\widetilde{\ml{E}}_1[W](t)+\tilde{k}\widetilde{\ml{E}}_2[W](t)\right)+(2-\varepsilon_2-2\tilde{k}\tau)\|W_{tt}(t,\cdot)\|_{L^2(\mb{R}^n)}^2+(2\tilde{k}\beta-\varepsilon_2-2)\|\nabla W_t(t,\cdot)\|_{L^2(\mb{R}^n)}^2\\
	&\leqslant 2\tau\frac{\mathrm{d}}{\mathrm{d}t}\left(\int_{\mb{R}^n}u_2(x)(W_t(t,x)+\tilde{k}W(t,x))\mathrm{d}x\right)+\begin{cases}
	C\tau^2\|(u_0,u_1)\|_{H^2(\mb{R}^n)\times H^2(\mb{R}^n)}^2&\mbox{if}\ \ u_1\neq0,\\
	C\tau^2(1+t)^{-1}\|u_0\|_{H^2(\mb{R}^n)}^2&\mbox{if}\ \ u_1=0.
	\end{cases}
	\end{align*}
	According to $W_{tt}(0,x)=w_t(0,x)=0$, we get $\widetilde{\ml{E}}_1[W](0)+\tilde{k}\widetilde{\ml{E}}_2[W](0)=0$.
	Integrating the previous inequality over $[0,t]$ yields
	\begin{align}\label{Est_Sing_01}
	&\beta\left\|\nabla W_t(t,\cdot)+\frac{1}{\beta}\nabla W(t,\cdot)\right\|_{L^2(\mb{R}^n)}^2+\tau\|W_{tt}(t,\cdot)+\tilde{k}W_{t}(t,\cdot)\|_{L^2(\mb{R}^n)}^2+\tilde{k}(1-\tau\tilde{k})\|W_t(t,\cdot)\|_{L^2(\mb{R}^n)}^2\notag\\
	&+\left(\tilde{k}-\frac{1}{\beta}\right)\|\nabla W(t,\cdot)\|_{L^2(\mb{R}^n)}^2+(2-\varepsilon_2-2\tilde{k}\tau)\int_0^t\|W_{\eta\eta}(\eta,\cdot)\|_{L^2(\mb{R}^n)}^2\mathrm{d}\eta\notag\\
	&+(2\tilde{k}\beta-\varepsilon_2-2)\int_0^t\|\nabla W_{\eta}(\eta,\cdot)\|_{L^2(\mb{R}^n)}^2\mathrm{d}\eta\notag\\
	&\leqslant 2\tau\int_{\mb{R}^n}u_2(x)(W_t(t,x)+\tilde{k}W(t,x))\mathrm{d}x+\begin{cases}
	C\tau^2t\|(u_0,u_1)\|_{H^2(\mb{R}^n)\times H^2(\mb{R}^n)}^2&\mbox{if}\ \ u_1\neq0,\\
	C\tau^2\ln(\mathrm{e}+t)\|u_0\|_{H^2(\mb{R}^n)}^2&\mbox{if}\ \ u_1=0.
	\end{cases}
	\end{align}
	Let us now estimate the first term on the right-hand side of \eqref{Est_Sing_01}. For one thing, there exists a positive constant $\varepsilon_3$ such that
	\begin{align*}
	2\tau\int_{\mb{R}^n}u_2(x)W_t(t,x)\mathrm{d}x\leqslant\frac{\tau^2}{\varepsilon_3}\|u_2\|_{L^2(\mb{R}^n)}^2+\varepsilon_3\|W_t(t,\cdot)\|_{L^2(\mb{R}^n)}^2.
	\end{align*}
	For another, making use of Hardy's inequality for $n\geqslant 3$, we get
	\begin{align*}
	2\tilde{k}\tau\int_{\mb{R}^n}u_2(x)W(t,x)\mathrm{d}x&\leqslant\frac{\tilde{k}\tau^2}{\varepsilon_3}\|\,|x|u_2\|_{L^2(\mb{R}^n)}^2+\varepsilon_3\tilde{k}\int_{\mb{R}^n}\frac{|W(t,x)|^2}{|x|^2}\mathrm{d}x\\
	&\leqslant\frac{\tilde{k}\tau^2}{\varepsilon_3}\|\,|x|u_2\|_{L^2(\mb{R}^n)}^2+\frac{n}{n-2}\varepsilon_3\tilde{k}\|\nabla W(t,\cdot)\|_{L^2(\mb{R}^n)}^2.
	\end{align*}
	All in all, we derive
	\begin{align*}
	&\beta\left\|\nabla W_t(t,\cdot)+\frac{1}{\beta}\nabla W(t,\cdot)\right\|_{L^2(\mb{R}^n)}^2+\tau\|W_{tt}(t,\cdot)+\tilde{k}W_{t}(t,\cdot)\|_{L^2(\mb{R}^n)}^2\\
	&+(\tilde{k}-\tau\tilde{k}^2-\varepsilon_3)\|W_t(t,\cdot)\|_{L^2(\mb{R}^n)}^2\notag+\left(\tilde{k}-\frac{1}{\beta}-\frac{n}{n-2}\varepsilon_3\tilde{k}\right)\|\nabla W(t,\cdot)\|_{L^2(\mb{R}^n)}^2\\
	&+2\left(1-\frac{\varepsilon_2}{2}-\tilde{k}\tau\right)\int_0^t\|W_{\eta\eta}(\eta,\cdot)\|_{L^2(\mb{R}^n)}^2\mathrm{d}\eta+2\left(\tilde{k}\beta-\frac{\varepsilon_2}{2}-1\right)\int_0^t\|\nabla W_{\eta}(\eta,\cdot)\|_{L^2(\mb{R}^n)}^2\mathrm{d}\eta\\
	&\leqslant \frac{C\tau^2}{\varepsilon_3}\left(\|u_2\|_{L^2(\mb{R}^n)}^2+\|\,|x|u_2\|_{L^2(\mb{R}^n)}^2\right)+\begin{cases}
	C\tau^2t\|(u_0,u_1)\|_{H^2(\mb{R}^n)\times H^2(\mb{R}^n)}^2&\mbox{if}\ \ u_1\neq0,\\
	C\tau^2\ln(\mathrm{e}+t)\|u_0\|_{H^2(\mb{R}^n)}^2&\mbox{if}\ \ u_1=0.
	\end{cases}
	\end{align*}
	
	Eventually, we just need to discuss the nonnegativity of coefficients for the norms. In the above, we need to restrict $\tilde{k}$ such that
	\begin{align*}
	1-\frac{\varepsilon_2}{2}-\tilde{k}\tau\geqslant0 \ \ \mbox{and}\ \ \tilde{k}\beta-\frac{\varepsilon_2}{2}-1\geqslant0, \ \ \mbox{iff}\ \ \tilde{k}\in\left[\frac{1+\varepsilon_2/2}{\beta},\frac{1-\varepsilon_2/2}{\tau}\right].
	\end{align*}
	Let us choose a small constant $\varepsilon_3>0$ satisfying
	\begin{align*}
	\tilde{k}-\tau\tilde{k}^2-\varepsilon_3>0\ \ \mbox{and} \ \ \left(1-\frac{n}{n-2}\varepsilon_3\right)\tilde{k}-\frac{1}{\beta}>0.
	\end{align*}
	Namely, we can determine small constant $\varepsilon_3$ such that
	\begin{align*}
	\tilde{k}\in\left[\frac{1+\varepsilon_2/2}{\beta},\frac{1-\varepsilon_2/2}{\tau}\right]\subset\left(\frac{(n-2)/(n-2-n\varepsilon_3)}{\beta},\frac{1/2+\sqrt{1/4-\tau\varepsilon_3}}{\tau}\right).
	\end{align*}
	So, the set of $\tilde{k}$ is not empty, providing that additional assumption $\varepsilon_2<2$ and
	\begin{align*}
	0<\varepsilon_3<\min\left\{\frac{n-2}{n},\frac{1}{4\tau},\frac{\varepsilon_2}{2+\varepsilon_2},\frac{2\varepsilon_2-\varepsilon_2^2}{4\tau}\right\},
	\end{align*}
	hold for $n\geqslant 3$. Indeed, the choice for these parameters can be independent of $\tau$. Let us give an example. Similarly to Remark \ref{Rem_Indpendent}, in the case of small $\tau$ such that $0<\tau\leqslant \min\{39\beta/41,1\}$, we may choose $\varepsilon_2=1/20$, $\tilde{k}=41/(40\beta)$ and $\varepsilon_3=1/1600$. Providing that $\tau\to0^+$, one may enlarge the choices of $\tilde{k}$, $\varepsilon_2$, $\varepsilon_3$.
	Recalling the relation
	\begin{align*}
	W_t(t,x)=w(t,x)=u_{\tau}(t,x)-v(t,x),
	\end{align*}  we immediately conclude our result.
\end{proof}

\section{Global (in time) existence of small data Sobolev solutions}\label{Section_Global_Existence}
\subsection{Philosophy of the proof}
According to Section \ref{Section_Linear_MGT}, we may represent the solution to the linear MGT equation in dissipative case by the form
\begin{align*}
u^{\lin}(t,x):=K(t,x)\ast_{(x)}u_2(x),
\end{align*}
where the partial Fourier transform of $K(t,x)$ with respect to $x$ was defined in \eqref{Representation_Fourier_u}. Furthermore, some $L^2$ estimates have been obtained.  In Theorem \ref{Thm_Lm-L2_Est}, by choosing $m=1$, we see
\begin{align*}
\|\,|D|^su^{\lin}(t,\cdot)\|_{L^2(\mb{R}^n)}\lesssim\tilde{g}_{n,s}(t)\|u_2\|_{L^2(\mb{R}^n)\cap L^1(\mb{R}^n)},
\end{align*}
where the time-dependent coefficient is given by
\begin{align*}
\tilde{g}_{n,s}(t):=\begin{cases}
(\ln(\mathrm{e}+t))^{\frac{1}{2}}&\mbox{if}\ \ n=2,s=0,\\
(1+t)^{\frac{1-5s}{6}}&\mbox{if}\ \ n=2,s\in(0,1/2),\\
(1+t)^{-\frac{s}{2}}&\mbox{if}\ \ n=2,s\in[1/2,2],\\
(1+t)^{\frac{1}{2}-\frac{s}{2}-\frac{n}{4}}&\mbox{if}\ \ n\geqslant 3,s\in[0,2].
\end{cases}
\end{align*}
Particularly, we denote $g_n(t):=\tilde{g}_{n,0}(t)$. Moreover, from Theorem \ref{Thm_L2-L2_Est}, one observes
\begin{align*}
\|\,|D|^su^{\lin}(t,\cdot)\|_{L^2(\mb{R}^n)}\lesssim h_s(t)\|u_2\|_{L^2(\mb{R}^n)},
\end{align*}
where the time-dependent coefficient is given by
\begin{align*}
h_s(t):=\begin{cases}
(1+t)^{1-\frac{s}{2}}&\mbox{if}\ \ s\in[0,1),\\
(1+t)^{\frac{1}{2}-\frac{s}{2}}&\mbox{if}\ \ s\in[1,2].
\end{cases}
\end{align*}
For $T>0$, we introduce the operator $N$ such that
\begin{align*}
N:\ u\in X_s(T)\to Nu(t,x):=u^{\lin}(t,x)+u^{\non}(t,x),
\end{align*}
where $X_s(T)$ is an evolution space such that
\begin{align}\label{Evolution_Spaces}
X_s(T):=\ml{C}([0,T],H^s(\mb{R}^n)),
\end{align}
with some suitable positive constants $s$ to be fixed later, and the integral operator is denoted by
\begin{align*}
u^{\non}(t,x):=\int_0^tK(t-\sigma,x)\ast_{(x)}|u(\sigma,x)|^p\mathrm{d}\sigma,
\end{align*}
which is motivated by Duhamel's principle.

In the forthcoming parts, we are going to demonstrate global (in time) existence of small data Sobolev solutions to the semilinear MGT equation \eqref{Semi_Linear_MGT_Dissipative} by proving a fixed point of operator $N$ which means $Nu\in X_s(T)$. In other words, the next two crucial inequalities:
\begin{align}
\|Nu\|_{X_s(T)}&\lesssim\|u_2\|_{L^2(\mb{R}^n)\cap L^1(\mb{R}^n)}+\|u\|_{X_s(T)}^p,\label{Cruc-01}\\
\|Nu-Nv\|_{X_s(T)}&\lesssim\|u-v\|_{X_s(T)}\left(\|u\|_{X_s(T)}^{p-1}+\|v\|_{X_s(T)}^{p-1}\right),\label{Cruc-02}
\end{align}
will be proved. Throughout this section, $u$ and $v$ are two solutions to the semilinear MGT equation \eqref{Semi_Linear_MGT_Dissipative}. Precisely, if we assume $\|u_2\|_{L^2(\mb{R}^n)\cap L^1(\mb{R}^n)}=\epsilon$ to be a sufficiently small constant, then we together \eqref{Cruc-01} with \eqref{Cruc-02} to conclude that there exists a uniquely determined local (in time) large data and global (in time) small data solution $u^{*}=u^{*}(t,x)$ belonging to the Sobolev space $X_s(T)$ by using Banach's fixed point theorem.

To end this subsection, we recall the fractional Gagliardo-Nirenberg inequality, whose proof can be found in \cite{Hajaiej2011}.
\begin{lemma}\label{fractionalgagliardonirenbergineq}
	Let $p,p_0,p_1\in(1,\infty)$ and $\kappa\in[0,s)$ with $s\in(0,\infty)$. Then, it holds for all $f\in L^{p_0}(\mb{R}^n)\cap \dot{H}^{s}_{p_1}(\mb{R}^n)$
	\begin{equation*}
	\|f\|_{\dot{H}^{\kappa}_{p}(\mb{R}^n)}\lesssim\|f\|_{L^{p_0}(\mb{R}^n)}^{1-\gamma}\,\|f\|^{\gamma}_{\dot{H}^{s}_{p_1}(\mb{R}^n)},
	\end{equation*}
	where $\gamma=\left(\frac{1}{p_0}-\frac{1}{p}+\frac{\kappa}{n}\right)\Big/\left(\frac{1}{p_0}-\frac{1}{p_1}+\frac{s}{n}\right)\in\left[\frac{\kappa}{s},1\right]$.
\end{lemma}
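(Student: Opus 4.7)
The plan is to prove the inequality via a Littlewood--Paley decomposition together with Bernstein's inequality, followed by optimization over a dyadic frequency cutoff. Before starting any quantitative estimate I would first verify that the exponent $\gamma$ appearing in the statement is forced by scaling invariance: testing both sides against the rescaled function $f_\lambda(x):=f(\lambda x)$ and equating the powers of $\lambda$ that result yields
\begin{equation*}
\kappa-\frac{n}{p}=-(1-\gamma)\frac{n}{p_0}+\gamma\left(s-\frac{n}{p_1}\right),
\end{equation*}
which solved for $\gamma$ gives exactly the formula stated in the lemma. This preliminary observation also indicates how the two building blocks will have to be balanced at the very end.

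Next I would fix a standard Littlewood--Paley resolution of the identity $f=\sum_{j\in\mb{Z}}P_jf$, with $P_j$ supported in the annulus $|\xi|\sim 2^j$, and use the equivalence
\begin{equation*}
\|f\|_{\dot{H}^\kappa_p(\mb{R}^n)}\approx\left\|\Big(\sum_{j}4^{j\kappa}|P_jf|^2\Big)^{1/2}\right\|_{L^p(\mb{R}^n)},
\end{equation*}
valid for $1<p<\infty$ by the Mihlin multiplier theorem. Splitting the summation at a cutoff $N\in\mb{Z}$, for the low-frequency piece $j\leqslant N$ I would apply Bernstein's inequality in the form $\|P_jf\|_{L^p(\mb{R}^n)}\lesssim 2^{jn(1/p_0-1/p)_+}\|f\|_{L^{p_0}(\mb{R}^n)}$, while for the high-frequency piece $j>N$ I would combine $\|P_jf\|_{L^p(\mb{R}^n)}\lesssim 2^{jn(1/p_1-1/p)_+}\|P_jf\|_{L^{p_1}(\mb{R}^n)}$ with $\|P_jf\|_{L^{p_1}(\mb{R}^n)}\lesssim 2^{-js}\|\,|D|^sP_jf\|_{L^{p_1}(\mb{R}^n)}$ to convert each term into $\|f\|_{\dot{H}^s_{p_1}(\mb{R}^n)}$ multiplied by a negative power of $2^j$. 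Summing the two geometric series (both convergent thanks to $\kappa<s$ and the range assumptions on $p,p_0,p_1$) yields
\begin{equation*}
\|f\|_{\dot{H}^\kappa_p(\mb{R}^n)}\lesssim 2^{N\alpha_0}\|f\|_{L^{p_0}(\mb{R}^n)}+2^{-N\alpha_1}\|f\|_{\dot{H}^s_{p_1}(\mb{R}^n)},
\end{equation*}
with $\alpha_0:=\kappa+n(1/p_0-1/p)$ and $\alpha_1:=s-\kappa-n(1/p_1-1/p)$, so that $\alpha_0+\alpha_1=s+n(1/p_0-1/p_1)$. Choosing $N$ so as to equalize the two terms produces an optimized bound whose exponents on the two norms are $\alpha_1/(\alpha_0+\alpha_1)=1-\gamma$ and $\alpha_0/(\alpha_0+\alpha_1)=\gamma$, in full agreement with the scaling relation derived at the outset.

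The main obstacle I expect is the careful handling of the endpoint regimes where $\alpha_0$ or $\alpha_1$ approaches zero, or where one of the Bernstein exponents $(1/p_i-1/p)_+$ vanishes. In such critical cases the crude triangle inequality applied to the Littlewood--Paley blocks is not sharp enough, and one must instead exploit the full square-function characterization together with the almost-orthogonality of the $P_j$. A secondary technical point is the equivalence $\dot{H}^\kappa_p\approx\dot{F}^\kappa_{p,2}$ for non-integer $\kappa$ and $p\neq2$, which rests on the boundedness of the Riesz transforms; if one wishes to avoid this, an alternative route is complex interpolation between $L^{p_0}$ and $\dot{H}^s_{p_1}$ in the style of Stein's theorem, which bypasses the square-function machinery at the cost of a heavier functional-analytic setup.
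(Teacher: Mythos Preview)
The paper does not actually prove this lemma: it simply recalls the statement and refers to \cite{Hajaiej2011} for the proof. Your proposal therefore goes well beyond what the paper does, supplying a genuine (and essentially standard) argument via Littlewood--Paley decomposition, Bernstein estimates, and optimization over a dyadic cutoff. This is in fact close in spirit to the approach taken in the cited reference, which works through the Besov scale $\dot{B}^s_{p,q}$ and then specializes; your square-function route through $\dot{F}^\kappa_{p,2}\approx\dot{H}^\kappa_p$ is an equivalent packaging of the same ideas.

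Your outline is sound in the bulk regime $0<\gamma<1$ with $\alpha_0,\alpha_1>0$, and you are right to flag the delicate points. One concrete issue worth sharpening: the crude block-by-block bound $\|P_jf\|_{L^p}\lesssim 2^{jn(1/p_0-1/p)_+}\|f\|_{L^{p_0}}$ (and its analogue with $p_1$) is only available when $p\geqslant p_0$ (respectively $p\geqslant p_1$); when $p<p_0$ the positive part $(\cdot)_+$ vanishes and the summed low-frequency piece does not converge by that estimate alone. In those cases one must keep the full square function together and appeal to the vector-valued Mihlin/Fefferman--Stein maximal inequality rather than estimate term by term---this is exactly what your remark about ``exploiting the full square-function characterization'' is pointing at, but it is not optional: it is the mechanism that makes the argument go through across the whole admissible range $\gamma\in[\kappa/s,1]$. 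Your alternative suggestion of complex interpolation between $L^{p_0}$ and $\dot{H}^s_{p_1}$ is also a legitimate and often cleaner route that avoids this case distinction.
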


\subsection{Lower regular Sobolev solution}
It is well-known that the study of lower regular Sobolev solution is more challenging than the study of higher regular one since the Sobolev embedding theory does not work well. In this part, we will study global (in time) existence of small data Sobolev solutions with low regularity in the evolution space $X_s(T)$, in which we will focus on the cases $s\in[1/2,2]$ if $n=2$, and $s\in(0,2]$ if $n\geqslant 3$. 
\begin{theorem}\label{THM_GESDS_Lower_Regular}
	Let $\tau\in(0,\beta)$. Let us consider $s\in[1/2,2]$ if $n=2$, and $s\in(0,2]$ if $n\geqslant 3$. We suppose that $p\geqslant 2$, and $p\leqslant 2n/(n-s)$ if $s<n\leqslant 3s$, $p\leqslant n/(n-2s)$ if $3s<n\leqslant 4s$. If
	\begin{align}\label{Assumption_P}
	p\begin{cases}
	>5&\mbox{if}\ \ n=2, s\in[1/2,1),\\
	>s+3&\mbox{if}\ \ n=2, s\in[1,2],\\
	\geqslant (n+3)/(n-1)&\mbox{if}\ \ 3\leqslant n\leqslant 6,s\in(0,1),\\
	>(n+2)/(n-1)&\mbox{if}\ \ 3\leqslant n\leqslant 6,s\in[1,2],\\
	\geqslant\max\{3n/2-1,n+3\}/(n-1)&\mbox{if}\ \ n\geqslant 7,s\in(0,1),\\
	\geqslant (3n/2-1)/(n-1)&\mbox{if}\ \ n\geqslant 7,s\in[1,2],
	\end{cases}
	\end{align}
	there exists a sufficiently small constant $\epsilon>0$ such that for $u_2\in L^2(\mb{R}^n)\cap L^1(\mb{R}^n)$ satisfying the assumption $\|u_2\|_{L^2(\mb{R}^n)\cap L^1(\mb{R}^n)}\leqslant\epsilon$, there is a uniquely determined global (in time) Sobolev solution
	\begin{align*}
	u\in\ml{C}([0,\infty),H^s(\mb{R}^n))
	\end{align*}
	to the semilinear MGT equation \eqref{Semi_Linear_MGT_Dissipative}. Furthermore, the solution fulfills the following estimates:
	\begin{align*}
	\|u(t,\cdot)\|_{L^2(\mb{R}^n)}&\lesssim g_n(t)\|u_2\|_{L^2(\mb{R}^n)\cap L^1(\mb{R}^n)},\\
	\|\,|D|^su(t,\cdot)\|_{L^2(\mb{R}^n)}&\lesssim\tilde{g}_{n,s}(t)\|u_2\|_{L^2(\mb{R}^n)\cap L^1(\mb{R}^n)}.
	\end{align*}
\end{theorem}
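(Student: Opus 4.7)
The plan is to set up Banach's contraction principle for the operator $N$ on the Banach space $X_s(T)$ equipped with the time-weighted norm
\begin{align*}
\|u\|_{X_s(T)} := \sup_{t \in [0,T]} \Big( g_n(t)^{-1} \|u(t,\cdot)\|_{L^2(\mb{R}^n)} + \tilde{g}_{n,s}(t)^{-1} \|\,|D|^s u(t,\cdot)\|_{L^2(\mb{R}^n)} \Big).
\end{align*}
Theorem \ref{Thm_Lm-L2_Est} with $m=1$ immediately yields $\|u^{\lin}\|_{X_s(T)} \lesssim \|u_2\|_{L^2(\mb{R}^n)\cap L^1(\mb{R}^n)}$, so the whole content of \eqref{Cruc-01} and \eqref{Cruc-02} lies in the Duhamel term $u^{\non}$.

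For $u^{\non}$, I would split the time integral as $\int_0^t = \int_0^{t/2} + \int_{t/2}^t$. On $[0,t/2]$, where $t-\sigma \sim t$, apply the $(L^2 \cap L^1)\to L^2$ kernel estimate from Theorem \ref{Thm_Lm-L2_Est}, which contributes $g_n(t-\sigma) \approx g_n(t)$ (and $\tilde{g}_{n,s}$ for the $|D|^s$ part). On $[t/2, t]$, where $t-\sigma$ may be small, switch to the pure $L^2\to L^2$ kernel estimate from Theorem \ref{Thm_L2-L2_Est}, which only costs a factor $h_0(t-\sigma)$ (respectively $h_s$) and avoids the non-integrable singularity at $\sigma=t$. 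The nonlinearity is then absorbed using the fractional Gagliardo-Nirenberg inequality (Lemma \ref{fractionalgagliardonirenbergineq}):
\begin{align*}
\||u(\sigma,\cdot)|^p\|_{L^1(\mb{R}^n)} &= \|u(\sigma,\cdot)\|_{L^p(\mb{R}^n)}^p \lesssim \|u(\sigma,\cdot)\|_{L^2(\mb{R}^n)}^{p(1-\theta_1)}\,\|\,|D|^s u(\sigma,\cdot)\|_{L^2(\mb{R}^n)}^{p\theta_1},\\
\||u(\sigma,\cdot)|^p\|_{L^2(\mb{R}^n)} &= \|u(\sigma,\cdot)\|_{L^{2p}(\mb{R}^n)}^p \lesssim \|u(\sigma,\cdot)\|_{L^2(\mb{R}^n)}^{p(1-\theta_2)}\,\|\,|D|^s u(\sigma,\cdot)\|_{L^2(\mb{R}^n)}^{p\theta_2},
\end{align*}
with $\theta_1 = (n/s)(1/2 - 1/p)$ and $\theta_2 = (n/s)(1/2 - 1/(2p))$. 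The upper bounds $p\leqslant 2n/(n-s)$ and $p\leqslant n/(n-2s)$ in the theorem hypotheses are exactly what forces $\theta_1,\theta_2\in[0,1]$ in each dimensional regime, and $p\geqslant 2$ rules out integrability issues at $\sigma=0$.

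Substituting the $X_s(T)$-control $\|u(\sigma,\cdot)\|_{L^2(\mb{R}^n)}\leqslant \|u\|_{X_s(T)}\,g_n(\sigma)$ and $\|\,|D|^s u(\sigma,\cdot)\|_{L^2(\mb{R}^n)}\leqslant\|u\|_{X_s(T)}\,\tilde{g}_{n,s}(\sigma)$, the whole problem collapses to showing that
\begin{align*}
\int_0^{t/2} g_n(t)\, g_n(\sigma)^{p(1-\theta_i)}\,\tilde{g}_{n,s}(\sigma)^{p\theta_i}\,\mathrm{d}\sigma
\quad \text{and} \quad
\int_{t/2}^t h_{\cdot}(t-\sigma)\,g_n(\sigma)^{p(1-\theta_i)}\,\tilde{g}_{n,s}(\sigma)^{p\theta_i}\,\mathrm{d}\sigma
\end{align*}
are each bounded by a constant multiple of $g_n(t)$ (respectively $\tilde{g}_{n,s}(t)$ for the $|D|^s$ estimate). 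The lower bounds on $p$ listed in \eqref{Assumption_P} are designed precisely to make these integrals converge as $t\to\infty$: they ensure the accumulated power of $(1+\sigma)$ in the first integral is strictly less than $-1$, and they control the polynomial order of the second integral near $\sigma=t$. Case analysis in $n=2$ vs. $n\geqslant 3$ and in $s\in(0,1)$ vs. $s\in[1,2]$ is needed because both $\tilde{g}_{n,s}$ and $h_s$ change form at these thresholds, and because the logarithmic factor in $g_2$ (and the slower decay $(1+t)^{(1-5s)/6}$ of $\tilde{g}_{2,s}$ for $s\in(0,1/2)$) must be kept under control.

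For the Lipschitz inequality \eqref{Cruc-02}, I use the pointwise estimate $\big||u|^p-|v|^p\big|\lesssim |u-v|\,(|u|^{p-1}+|v|^{p-1})$ and H\"older's inequality to separate $u-v$ (in $L^p$ or $L^{2p}$) from $|u|^{p-1}+|v|^{p-1}$ (in conjugate Lebesgue exponents), then apply the fractional Gagliardo-Nirenberg inequality exactly as above. The resulting time integrals are identical to those in \eqref{Cruc-01}, so both \eqref{Cruc-01} and \eqref{Cruc-02} close under the same conditions on $p$, and Banach's fixed point theorem delivers a unique $u^{\ast}\in X_s(T)$ whose bound is uniform in $T$ provided $\epsilon=\|u_2\|_{L^2(\mb{R}^n)\cap L^1(\mb{R}^n)}$ is chosen small enough; letting $T\to\infty$ gives the global solution with the stated decay. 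The main obstacle is the combinatorial bookkeeping in the time-integral estimates: the exponent bounds in \eqref{Assumption_P} are the exact output of requiring that every sub-case close simultaneously, and tracking the interplay between the GN exponents $\theta_1,\theta_2$, the three branches of $\tilde{g}_{n,s}$, and the two branches of $h_s$ is the technical core of the argument.
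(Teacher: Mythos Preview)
Your proposal follows essentially the same contraction-mapping strategy as the paper's proof: the same time-weighted norm on $X_s(T)$, the same splitting of the Duhamel integral at $t/2$, the same use of the fractional Gagliardo--Nirenberg inequality with exponents $\theta_1,\theta_2$ (called $\gamma_1,\gamma_2$ in the paper), and the same H\"older argument for the Lipschitz estimate.

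The only substantive difference is in the $L^2$ estimate of $u^{\non}$ on $[t/2,t]$: the paper keeps the $(L^2\cap L^1)\to L^2$ kernel estimate $g_n(t-\sigma)$ there and uses the $L^1$ bound on $|u|^p$ (with exponent $\gamma_1$), whereas you switch to the $L^2\to L^2$ estimate $h_0(t-\sigma)$ paired with $\theta_2$. Your remark that this ``avoids the non-integrable singularity at $\sigma=t$'' is a misconception---$g_n(t-\sigma)$ is bounded as $\sigma\to t$, so no singularity exists---and the paper's choice is what actually produces the sharp threshold $(3n/2-1)/(n-1)$ for $n\geqslant 7$. Your variant yields $p\geqslant (n+3)/(n-1)$ from the $[t/2,t]$ piece of the $L^2$ estimate, which for $n=7$ is slightly \emph{stricter} than the theorem's stated condition; so to recover exactly \eqref{Assumption_P} you should keep the $(L^2\cap L^1)\to L^2$ estimate on all of $[0,t]$ for the $L^2$ norm, reserving the switch to $h_s$ only for the $\dot H^s$ norm. (Also, the branch $s\in(0,1/2)$ of $\tilde g_{2,s}$ that you mention is excluded by hypothesis when $n=2$, so it never enters.)
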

\begin{exam}
	Let us consider $s=2$. Then, the observation of Theorem \ref{THM_GESDS_Lower_Regular} with $s=2$ shows the global (in time) small data Sobolev solution (in the classical energy sense)
	\begin{align*}
	u\in\ml{C}([0,\infty),H^2(\mb{R}^n))
	\end{align*} to the semilinear MGT equation \eqref{Semi_Linear_MGT_Dissipative} with $\tau\in(0,\beta)$ uniquely exists providing that
	\begin{itemize}
		\item when $n=2$, we assume $p>5$;
		\item when $n=3,4$, we assume $(n+2)/(n-1)<p\leqslant 2n/(n-2)$;
		\item when $n=5,6$, we assume $2\leqslant p\leqslant 2n/(n-2)$;
		\item when $n=7,8$, we assume  $2\leqslant p\leqslant n/(n-4)$.
	\end{itemize}
\end{exam}
\begin{remark}
	Comparing the result of the linearized problem in Theorem \ref{Thm_Lm-L2_Est} with $m=1$, the estimates stated in Theorem \ref{THM_GESDS_Lower_Regular} are no loss of decay with respect to the corresponding linear Cauchy problem.
\end{remark}
\begin{remark}
	Indeed, one may also follow the proof of Theorem \ref{THM_GESDS_Lower_Regular} to prove global (in time) existence results for other regularity assumptions on initial data. By considering $u_2\in L^2(\mb{R}^n)\cap L^m(\mb{R}^n)$ for $m\in(1,2)$, one just need to use Theorem \ref{Thm_Lm-L2_Est}, and the lower bound of the exponent $p\geqslant 2$ will be replaced by $p\geqslant 2/m$ due to the application of the fractional Gagliardo-Nirenberg inequality. 
\end{remark}
\begin{remark}
	From the restriction $n\leqslant4s$ in Theorem \ref{THM_GESDS_Lower_Regular}, we should control the dimension satisfying $n\leqslant 8$ due to $s\in(0,2]$. For the global (in time) existence result in high-dimensional space $n\geqslant 9$ with additional $L^1$ data, one may study higher regular Sobolev solution, i.e. 
	\begin{align*}
	u\in\ml{C}([0,\infty),H^s(\mb{R}^n))\ \ \mbox{with}\ \ s\in(2,\infty).
	\end{align*}
	We should emphasize that due to $s\in(2,\infty)$ in Theorem \ref{Thm_Lm-L2_Est}, it is necessary to estimate
	\begin{align*}
	\|\,|u(\sigma,\cdot)|^p\|_{\dot{H}^{s-2}(\mb{R}^n)}\ \ \mbox{and}\ \ \|\,|u(\sigma,\cdot)|^p-|v(\sigma,\cdot)|^p\|_{\dot{H}^{s-2}(\mb{R}^n)}.
	\end{align*}
	To estimate the first norm, one may apply the fractional chain rule with the additional restriction $p>\lceil s-2\rceil$. While in the estimate of the second norm, the main tools are the fractional Leibniz rule and the fractional chain rule (see \cite{Grafakos2014} and \cite{PalmieriReissig2018}) carrying a stronger condition $p>1+\lceil s-2\rceil\geqslant 2$. Furthermore, if $s-2>n/2$ and $p>s-1$, one may apply the fractional powers rule to estimate the last mentioned norms to prove existence of large regular ($s>n/2+2$) Sobolev solutions.
\end{remark}
\begin{proof}
	To begin with the proof, we construct the time-weighted norm for the evolution space $X_s(T)$ with $s\in(0,2]$ for $T>0$ by
	\begin{align*}
	\|u\|_{X_s(T)}:=\sup\limits_{t\in[0,T]}\left((g_n(t))^{-1}\|u(t,\cdot)\|_{L^2(\mb{R}^n)}+(\tilde{g}_{n,s}(t))^{-1}\|u(t,\cdot)\|_{\dot{H}^s(\mb{R}^n)}\right).
	\end{align*}
	From Theorem \ref{Thm_Lm-L2_Est} with $m=1$, we easily get
	\begin{align*}
	\|u^{\lin}\|_{X_s(T)}&\lesssim\|u_2\|_{L^2(\mb{R}^n)\cap L^1(\mb{R}^n)}.
	\end{align*}
	Thus, we may claim that $u^{\lin}\in X_s(T)$ for any $s\in(0,2]$. In the view of the desired inequality \eqref{Cruc-01}, we just have to justify the next one:
	\begin{align*}
	\|u^{\non}\|_{X_s(T)}\lesssim\|u\|_{X_s(T)}^p.
	\end{align*}
	
	Initially, we apply the derived $(L^2\cap L^1)-L^2$ estimate stated in Theorem \ref{Thm_Lm-L2_Est} in the interval $[0,t]$ to get
	\begin{align*}
	\|u^{\non}(t,\cdot)\|_{L^2(\mb{R}^n)}\lesssim\int_0^{t}g_n(t-\sigma)\|\,|u(\sigma,\cdot)|^p\|_{L^2(\mb{R}^n)\cap L^1(\mb{R}^n)}\mathrm{d}\sigma.
	\end{align*}
	To estimate the power nonlinear term in the norm, we employ the fractional Gagliardo-Nirenberg inequality that
	\begin{align*}
	\|\,|u(\sigma,\cdot)|^p\|_{ L^1(\mb{R}^n)}&=\|u(\sigma,\cdot)\|_{L^p(\mb{R}^n)}^p\lesssim(g_n(\sigma))^{(1-\gamma_1)p}(\tilde{g}_{n,s}(\sigma))^{\gamma_1p}\|u\|_{X_s(\sigma)}^p,\\
	\|\,|u(\sigma,\cdot)|^p\|_{L^2(\mb{R}^n)}&=\|u(\sigma,\cdot)\|_{L^{2p}(\mb{R}^n)}^p\lesssim (g_n(\sigma))^{(1-\gamma_2)p}(\tilde{g}_{n,s}(\sigma))^{\gamma_2p}\|u\|_{X_s(\sigma)}^p,
	\end{align*}
	where the parameters are $\gamma_1:=\frac{n}{s}\left(\frac{1}{2}-\frac{1}{p}\right)\in[0,1]$ and $\gamma_2:=\frac{n}{s}\left(\frac{1}{2}-\frac{1}{2p}\right)\in[0,1]$.\\
	The previous restrictions lead to
	\begin{align}\label{Condition_G-N}
	2\leqslant p
	\begin{cases}
	<\infty&\mbox{if}\ \ 1<n\leqslant s,\\
	\leqslant 2n/(n-s)&\mbox{if}\ \ s<n\leqslant 3s,\\
	\leqslant n/(n-2s)&\mbox{if}\ \ 3s<n\leqslant 4s.
	\end{cases}
	\end{align}
	Here, the restriction for $n\leqslant 4s$ comes from the nonempty set of $p\in[2,n/(n-2s)]$.\\
	Obviously, we know that
	\begin{align*}
	1>\frac{\tilde{g}_{n,s}(\sigma)}{g_n(\sigma)}=\begin{cases}
	(1+\sigma)^{-\frac{s}{2}}(\ln(\mathrm{e}+\sigma))^{-\frac{1}{2}}&\mbox{if}\ \ n=2,s\in[1/2,2],\\
	(1+\sigma)^{-\frac{s}{2}}&\mbox{if}\ \  n\geqslant 3,s\in(0,2],
	\end{cases}
	\end{align*}
	which implies from $\gamma_1<\gamma_2$,
	\begin{align*}
	\|\,|u(\sigma,\cdot)|^p\|_{L^2(\mb{R}^n)\cap  L^1(\mb{R}^n)}\lesssim
	(g_n(\sigma))^{(1-\gamma_1)p}(\tilde{g}_{n,s}(\sigma))^{\gamma_1p}\|u\|_{X_s(\sigma)}^p.
	\end{align*}
	For one thing, by some direct computations, they yield
	\begin{align*}
	(g_n(\sigma))^{(1-\gamma_1)p}(\tilde{g}_{n,s}(\sigma))^{\gamma_1p}=\begin{cases}
	(1+\sigma)^{-\frac{p}{2}+1}(\ln(\mathrm{e}+\sigma))^{\frac{(s-1)p+2}{2s}}&\mbox{if}\ \ n=2,s\in[1/2,2],\\
	(1+\sigma)^{-\frac{(n-1)p}{2}+\frac{n}{2}}&\mbox{if}\ \ n\geqslant 3,s\in(0,2],
	\end{cases}
	\end{align*}
	and
	\begin{align*}
	(g_n(\sigma))^{(1-\gamma_2)p}(\tilde{g}_{n,s}(\sigma))^{\gamma_2p}=\begin{cases}
	(1+\sigma)^{-\frac{p}{2}+\frac{1}{2}}(\ln(\mathrm{e}+\sigma))^{\frac{(s-1)p+1}{2s}}&\mbox{if}\ \ n=2,s\in[1/2,2],\\
	(1+\sigma)^{-\frac{(n-1)p}{2}+\frac{n}{4}}&\mbox{if}\ \ n\geqslant 3,s\in(0,2].
	\end{cases}
	\end{align*}
	According to the assumption
	\begin{align}\label{Condition_001}
	p\begin{cases}
	>\max\{s+3,4\}&\mbox{if}\ \ n=2,s\in[1/2,2],\\
	>(n+2)/(n-1)&\mbox{if} \ \ 3\leqslant n\leqslant 6,s\in(0,2],\\
	\geqslant(3n-2)/(2n-2)&\mbox{if} \ \ n\geqslant7,s\in(0,2],
	\end{cases}
	\end{align}
	it is true that
	\begin{align*}
	\int_0^{t/2}(g_n(\sigma))^{(1-\gamma_1)p}(\tilde{g}_{n,s}(\sigma))^{\gamma_1p}\mathrm{d}\sigma\lesssim 1,
	\end{align*}
	and
	\begin{align*}
	(g_n(t))^{(1-\gamma_1)p}(\tilde{g}_{n,s}(t))^{\gamma_1p}\int_{t/2}^tg_n(t-\sigma)\mathrm{d}\sigma&\lesssim\begin{cases}
	(1+t)^{-\frac{p}{2}+\frac{3}{2}}(\ln(\mathrm{e}+t))^{\frac{(s-1)p+s+2}{2s}}&\mbox{if}\ \ n=2,\\
	(1+t)^{-\frac{(n-1)p}{2}+\frac{3}{2}+\frac{n}{4}}&\mbox{if}\ \ 3\leqslant n\leqslant 5,\\
	(1+t)^{-\frac{5p}{2}+3}\ln(\mathrm{e}+t)&\mbox{if}\ \ n=6,\\
	(1+t)^{-\frac{(n-1)p}{2}+\frac{n}{2}}&\mbox{if}\ \ n\geqslant 7,
	\end{cases}\\
	&\lesssim g_n(t).
	\end{align*}
	Then, by dividing $[0,t]$ into $[0,t/2]$ and $[t/2,t]$, one may immediately arrive at
	\begin{align*}
	\|u^{\non}(t,\cdot)\|_{L^2(\mb{R}^n)}&\lesssim g_n(t)\|u\|_{X_s(T)}^p\int_0^{t/2}(g_n(\sigma))^{(1-\gamma_1)p}(\tilde{g}_{n,s}(\sigma))^{\gamma_1p}\mathrm{d}\sigma\\
	&\quad+(g_n(t))^{(1-\gamma_1)p}(\tilde{g}_{n,s}(t))^{\gamma_1p}\|u\|_{X_s(T)}^p\int_{t/2}^tg_n(t-\sigma)\mathrm{d}\sigma\\
	&\lesssim g_n(t)\|u\|_{X_s(T)}^p,
	\end{align*}
	where we used $\|u\|_{X_s(\sigma)}\lesssim \|u\|_{X_s(T)}$ for any $\sigma\in[0,T]$ and taking account into the fact that
	\begin{align*}
	(1+t-\sigma)\approx(1+t)\ \ \mbox{for}\ \ \sigma\in[0,t/2]\ \ \mbox{and}\ \ (1+\sigma)\approx (1+t)\ \ \mbox{for}\ \ \sigma\in[t/2,t].
	\end{align*}
	
	Next, we will estimate the solution in the $\dot{H}^s$ norm. At this time, we employ the obtained $(L^2\cap L^1)-L^2$ estimate in $[0,t/2]$, and $L^2-L^2$ estimate in $[t/2,t]$ leading to
	\begin{align*}
	\|u^{\non}(t,\cdot)\|_{\dot{H}^s(\mb{R}^n)}&\lesssim\int_0^{t/2}\tilde{g}_{n,s}(t-\sigma)\|\,|u(\sigma,\cdot)|^p\|_{L^2(\mb{R}^n)\cap L^1(\mb{R}^n)}\mathrm{d}\sigma+\int_{t/2}^th_s(t-\sigma)\|\,|u(\sigma,\cdot)|^p\|_{L^2(\mb{R}^n)}\mathrm{d}\sigma\\
	&\lesssim \tilde{g}_{n,s}(t)\|u\|_{X_s(T)}^p\int_0^{t/2}(g_n(\sigma))^{(1-\gamma_1)p}(\tilde{g}_{n,s}(\sigma))^{\gamma_1p}\mathrm{d}\sigma\\
	&\quad+(g_n(t))^{(1-\gamma_2)p}(\tilde{g}_{n,s}(t))^{\gamma_2p}(1+t)h_s(t)\|u\|_{X_s(T)}^p\\
	&\lesssim \tilde{g}_{n,s}(t)\|u\|_{X_s(T)}^p,
	\end{align*}
	where we used our assumption \eqref{Condition_001} and additionally,
	\begin{align}\label{Condition_111}
	p\begin{cases}
	> 5&\mbox{if} \ \ n=2, s\in[1/2,1)\\
	> 4&\mbox{if}\ \ n=2,s\in[1,2],\\
	\geqslant(n+3)/(n-1)&\mbox{if} \ \ n\geqslant 3,s\in(0,1),\\
	\geqslant(n+2)/(n-1)&\mbox{if} \ \ n\geqslant 3,s\in[1,2],
	\end{cases}
	\end{align}
	to derive
	\begin{align*}
	1&\gtrsim(g_n(t))^{(1-\gamma_2)p}(\tilde{g}_{n,s}(t))^{\gamma_2p}(1+t)h_s(t)(\tilde{g}_{n,s}(t))^{-1}\\
	&=\begin{cases}
	(1+t)^{-\frac{p}{2}+\frac{5}{2}}(\ln(\mathrm{e}+t))^{\frac{(s-1)p+1}{2s}}&\mbox{if}\ \ n=2,s\in[1/2,1),\\
	(1+t)^{-\frac{p}{2}+2}(\ln(\mathrm{e}+t))^{\frac{(s-1)p+1}{2s}}&\mbox{if}\ \ n=2,s\in[1,2],\\
	(1+t)^{-\frac{(n-1)p}{2}+\frac{n}{2}+\frac{3}{2}}&\mbox{if}\ \ n\geqslant 3,s\in(0,1),\\
	(1+t)^{-\frac{(n-1)p}{2}+\frac{n}{2}+1}&\mbox{if}\ \ n\geqslant 3,s\in[1,2].
	\end{cases}
	\end{align*}
	By assuming \eqref{Condition_G-N}, \eqref{Condition_001}, \eqref{Condition_111} and summarizing the derived estimates, it is proved that the operator $N$ maps $X_s(T)$ into itself, namely, $Nu\in X_s(T)$.
	
	Finally, with the aim of proving the Lipschitz condition, we may take two solutions $u,v\in X_s(T)$. From the derived result of \eqref{Cruc-01}, it is clear that $Nu,Nv\in X_s(T)$. Therefore, we have
	\begin{align*}
	\|Nu-Nv\|_{X_s(T)}=\left\|\int_0^tK(t-\sigma,x)\ast_{(x)}(|u(\sigma,x)|^p-|v(\sigma,x)|^p)\mathrm{d}\sigma\right\|_{X_s(T)}.
	\end{align*}
	We assume that \eqref{Condition_G-N} and \eqref{Assumption_P} hold. For the estimate in the $L^2$ norm, we apply H\"older's inequality and the fractional Gagliardo-Nirenberg inequality to arrive at
	\begin{align*}
	&\|(Nu-Nv)(t,\cdot)\|_{L^2(\mb{R}^n)}\\
	&\lesssim\int_0^tg_n(t-\sigma)\|\,|u(\sigma,\cdot)|^p-|v(\sigma,\cdot)|^p\|_{L^2(\mb{R}^n)\cap L^1(\mb{R}^n)}\mathrm{d}\sigma\\
	&\lesssim\int_0^tg_n(t-\sigma)\|u(\sigma,\cdot)-v(\sigma,\cdot)\|_{L^p(\mb{R}^n)}\left(\|u(\sigma,\cdot)\|_{L^p(\mb{R}^n)}^{p-1}+\|v(\sigma,\cdot)\|_{L^p(\mb{R}^n)}^{p-1}\right)\mathrm{d}\sigma\\
	&\quad+\int_0^tg_n(t-\sigma)\|u(\sigma,\cdot)-v(\sigma,\cdot)\|_{L^{2p}(\mb{R}^n)}\left(\|u(\sigma,\cdot)\|_{L^{2p}(\mb{R}^n)}^{p-1}+\|v(\sigma,\cdot)\|_{L^{2p}(\mb{R}^n)}^{p-1}\right)\mathrm{d}\sigma\\
	&\lesssim\int_0^tg_n(t-\sigma)(g_n(\sigma))^{(1-\gamma_1)p}(\tilde{g}_{n,s}(\sigma))^{\gamma_1p}\mathrm{d}\sigma\,\|u-v\|_{X_s(T)}\left(\|u\|_{X_s(T)}^{p-1}+\|v\|_{X_s(T)}^{p-1}\right)\\
	& \lesssim g_n(t) \|u-v\|_{X_s(T)}\left(\|u\|_{X_s(T)}^{p-1}+\|v\|_{X_s(T)}^{p-1}\right).
	\end{align*}
	By repeating the same approach as before, we conclude
	\begin{align*}
	\|(Nu-Nv)(t,\cdot)\|_{\dot{H}^s(\mb{R}^n)}\lesssim \tilde{g}_{n,s}(t) \|u-v\|_{X_s(T)}\left(\|u\|_{X_s(T)}^{p-1}+\|v\|_{X_s(T)}^{p-1}\right).
	\end{align*}
	
	Therefore, the crucial estimates \eqref{Cruc-01} and \eqref{Cruc-02} are valid. By using Banach's fixed point theorem, there exists a unique determined global (in time) low regular Sobolev solution to the semilinear MGT equation \eqref{Semi_Linear_MGT_Dissipative}. The proof is complete.
\end{proof}

\section{Nonexistence of global (in time) weak solutions}\label{Section_Blow-up}
Before showing our main result on blow-up of solutions, let us first give a definition of weak solutions to the semilinear MGT equation \eqref{Semi_Linear_MGT_Dissipative}.
\begin{defn}\label{Defn_Weak_Solution}
	Let $p>1$. We say $u\in L_{\mathrm{loc}}^p([0,\infty)\times\mb{R}^n)$ is a global (in time) weak solution to the semilinear MGT equation \eqref{Semi_Linear_MGT_Dissipative} if the integral equality 
	\begin{align}\label{Integral Equality}
	&\int_0^{\infty}\int_{\mb{R}^n}u(t,x)(-\tau\psi_{ttt}(t,x)+\psi_{tt}(t,x)-\Delta\psi(t,x)+\beta\Delta\psi_t(t,x))\mathrm{d}x\mathrm{d}t\notag\\
	&=\tau\int_{\mb{R}^n}u_2(x)\psi(0,x)\mathrm{d}x+\int_0^{\infty}\int_{\mb{R}^n}|u(t,x)|^p\psi(t,x)\mathrm{d}x\mathrm{d}t
	\end{align}
	holds for any $\psi\in\ml{C}_0^{\infty}([0,\infty)\times\mb{R}^n)$.
\end{defn}
\begin{theorem}
	Let $\tau\in(0,\beta)$. Let us assume that $u_2\in L^1(\mb{R}^n)$ and 
	\begin{align*}
	\int_{\mb{R}^n}u_2(x)\mathrm{d}x>0.
	\end{align*}
	Then, there exist no any the global (in time) weak solutions to the semilinear MGT equation \eqref{Semi_Linear_MGT_Dissipative} in the sense of  Definition \ref{Defn_Weak_Solution} providing that the exponent of nonlinearity satisfies
	\begin{align*}
	1<p\begin{cases}
	<\infty&\mbox{if}\ \ n=1,\\
	\leqslant (n+1)/(n-1)&\mbox{if}\ \ n\geqslant 2.
	\end{cases}
	\end{align*}
\end{theorem}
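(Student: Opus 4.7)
The result follows from a standard test function argument against the weak formulation \eqref{Integral Equality}, in which the critical exponent $(n+1)/(n-1)$ arises from the wave-type scaling $t\sim R$, $|x|\sim R$ that makes the second-order wave part $u_{tt}-\Delta u$ dominate the dispersive corrections $\tau u_{ttt}$ and $\beta\Delta u_t$ as $R\to\infty$. The plan is to argue by contradiction: assume a global weak solution $u$ exists in the sense of Definition \ref{Defn_Weak_Solution}, produce a family of cut-off test functions $\psi_R$, and let $R\to\infty$.

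First, I would fix nonincreasing cut-offs $\eta,\phi\in\mathcal{C}_0^\infty([0,\infty))$ with $\eta\equiv\phi\equiv1$ on $[0,1/2]$ and $\eta\equiv\phi\equiv0$ on $[1,\infty)$, and set $\psi_R(t,x):=\bigl[\eta(t/R)\,\phi(|x|/R)\bigr]^{\ell}$ for $R\gg1$ and a large integer $\ell>3p'$, where $p'=p/(p-1)$. Inserting $\psi=\psi_R$ in \eqref{Integral Equality} and recalling $\psi_R(0,x)\geqslant 0$ and $|u|^p\psi_R\geqslant 0$, I get the basic inequality
\begin{align*}
\tau\int_{\mb{R}^n}u_2(x)\psi_R(0,x)\,\mathrm{d}x+\int_0^\infty\!\!\int_{\mb{R}^n}|u|^p\psi_R\,\mathrm{d}x\,\mathrm{d}t\leqslant\sum_{k}\left|\int_0^\infty\!\!\int_{\mb{R}^n} u\, D^k\psi_R\,\mathrm{d}x\,\mathrm{d}t\right|,
\end{align*}
where the sum runs over the four differential operators $\tau\partial_t^3$, $\partial_t^2$, $\Delta$, $\beta\Delta\partial_t$. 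To each integral on the right I apply H\"older's inequality with conjugate exponents $p,p'$, splitting $u\,D^k\psi_R=(u\psi_R^{1/p})\cdot(D^k\psi_R\,\psi_R^{-1/p})$. The choice $\ell>3p'$ ensures that the resulting factors $|D^k\psi_R|^{p'}\psi_R^{1-p'}$ remain bounded and locally integrable.

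Next, by the change of variables $t=R\sigma$, $x=Ry$, the integrals
\begin{align*}
\int_0^\infty\!\!\int_{\mb{R}^n}|D^k\psi_R|^{p'}\psi_R^{1-p'}\,\mathrm{d}x\,\mathrm{d}t\lesssim R^{n+1-k_*p'},
\end{align*}
where $k_*=2$ for $\partial_t^2\psi_R$ and $\Delta\psi_R$, while $k_*=3$ for $\partial_t^3\psi_R$ and $\Delta\partial_t\psi_R$. Since $R^{-3p'}\leqslant R^{-2p'}$ for $R\geqslant 1$, the dominant contribution is $R^{n+1-2p'}$. Writing $Y_R:=\int\int|u|^p\psi_R\,\mathrm{d}x\,\mathrm{d}t$ and applying Young's inequality $Y_R^{1/p}\cdot Q\leqslant\tfrac{1}{2}Y_R+C\,Q^{p'}$ with $Q=R^{(n+1-2p')/p'}$ yields
\begin{align*}
\tau\int_{\mb{R}^n}u_2(x)\psi_R(0,x)\,\mathrm{d}x+\frac{1}{2}Y_R\leqslant C\,R^{n+1-2p'}.
\end{align*}
In the subcritical range $1<p<(n+1)/(n-1)$ (which for $n=1$ means any $p>1$, since then $n+1-2p'=2-2p'<0$ for all $p>1$), the exponent $n+1-2p'$ is negative, so the right-hand side vanishes as $R\to\infty$. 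Meanwhile the dominated convergence theorem applied to $u_2\in L^1(\mb{R}^n)$ and $0\leqslant\psi_R(0,x)\leqslant 1$ with $\psi_R(0,x)\to 1$ pointwise gives $\tau\int u_2\psi_R(0,\cdot)\to\tau\int u_2>0$, yielding the desired contradiction.

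The main obstacle is the critical case $p=(n+1)/(n-1)$ (hence $2p'=n+1$), where the displayed estimate only produces $Y_R\leqslant C$ and Fatou's lemma gives the a priori bound $\int_0^\infty\!\!\int_{\mb{R}^n}|u|^p\,\mathrm{d}x\,\mathrm{d}t<\infty$. To close the argument, I plan to rerun H\"older's inequality more carefully, exploiting the fact that the derivatives $D^k\psi_R$ are supported in the annular sets $\mathrm{supp}(D\psi_R)\subset\{R/2\leqslant t\leqslant R\}\cup\{R/2\leqslant|x|\leqslant R\}$. Restricting the first H\"older factor to this support gives
\begin{align*}
\tau\int u_2\psi_R(0)+Y_R\leqslant C\biggl(\int\!\!\int_{\mathrm{supp}(D\psi_R)}|u|^p\psi_R\,\mathrm{d}x\,\mathrm{d}t\biggr)^{1/p},
\end{align*}
and the right-hand side tends to $0$ as $R\to\infty$ by the absolute continuity of the integral, again contradicting the positivity of $\int u_2\,\mathrm{d}x$. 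This step, together with the correct bookkeeping of the higher-order terms $\tau\psi_{R,ttt}$ and $\beta\Delta\psi_{R,t}$ via the choice $\ell>3p'$, is the technical heart of the proof.
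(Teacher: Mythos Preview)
Your proposal is correct and follows essentially the same test function argument as the paper: the paper chooses $\psi_R=\eta(t/R)\phi(x/R)$ and imposes the pointwise bounds \eqref{assumption eta phi 1}--\eqref{assumption eta phi 2} directly, whereas you achieve the same effect via the $\ell$-th power trick with $\ell>3p'$, and both routes yield the dominant scaling $R^{n+1-2p'}$ after Young/H\"older. For the critical exponent $p=(n+1)/(n-1)$ the paper simply cites \cite{Zhang01}, while you spell out the standard refinement (a priori $L^p$ bound via Fatou, then restriction of the H\"older factor to $\mathrm{supp}(D\psi_R)$ and absolute continuity), which is exactly the argument behind that reference.
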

\begin{remark}
	In the one-dimensional case, every weak solution according to Definition \ref{Defn_Weak_Solution} blows up for any $1<p<\infty$, which means that the result in 1D is optimal. 
\end{remark}
\begin{remark}
	We may derive blow-up results for other regularity assumptions on initial data. Let us assume $u_2\in L^m(\mb{R}^n)$ with $m\in(1,2)$ and
	\begin{align*}
	u_2(x)\gtrsim|x|^{-\frac{n}{m}}(\ln(1+|x|))^{-1}\ \ \mbox{for} \ \ |x|\gg 1.
	\end{align*}
	Then, one may also prove blow-up of weak solutions to the semilinear MGT equation \eqref{Semi_Linear_MGT_Dissipative} providing that $1<p<\infty$ if $n=1$, and $1<p<(n+m)/(n-m)$ if $n\geqslant2$. The proof is strictly following those of Theorem 4.1 in \cite{Chen-Dao2020}. 
\end{remark}
\begin{proof}
	Let us now introduce two bump functions $\eta\in\ml{C}_0^{\infty}([0,\infty))$ and $\phi\in\ml{C}_0^{\infty}(\mb{R}^n)$ such that $\eta=\eta(t)$ is decreasing with $\eta=1$ on $[0,1/2]$ and $\mathrm{supp}\,\eta\subset[0,1]$; $\phi=\phi(x)$ is radial symmetric, decreasing with respect to $|x|$ with $\phi=1$ on $B_{1/2}$ and $\mathrm{supp}\,\phi\subset B_1$. Moreover, we assume
	\begin{align}\label{assumption eta phi 1}
	(\eta(t))^{-\frac{p'}{p}}\left(|\eta'''(t)|^{p'}+|\eta''(t)|^{p'}+|\eta'(t)|^{p'}\right)&\leqslant C,\\
	(\phi(x))^{-\frac{p'}{p}}|\Delta\phi(x)|^{p'}&\leqslant C,\label{assumption eta phi 2}
	\end{align}
	where $p'$ is the conjugate of $p$, i.e. $1/p+1/p'=1$, and $C$ is a positive constant, with $\eta,\phi\in[0,1]$.
	
	To begin with, we define a test function
	\begin{align*}
	\psi_R(t,x):=\eta_R(t)\phi_R(x):=\eta(t/R)\phi(x/R),
	\end{align*}
	where $R\in[1,\infty)$ is a large parameter. Furthermore, we may introduce
	\begin{align*}
	I_R:=\int_0^{\infty}\int_{\mb{R}^n}|u(t,x)|^p\psi_R(t,x)\mathrm{d}x\mathrm{d}t.
	\end{align*}
	By considering \eqref{Integral Equality} in the definition of weak solution with the test function $\psi(t,x)=\psi_R(t,x)$, one immediately has
	\begin{align*}
	&I_R+\tau\int_{\mb{R}^n} u_2(x)\phi_R(x)\mathrm{d}x\\
	&=\int_0^{\infty}\int_{\mb{R}^n}u(t,x)\left(-\tau\partial_t^3\psi_R(t,x)+\partial_t^2\psi_R(t,x)-\Delta\psi_R(t,x)+\beta\partial_t\Delta\psi_R(t,x)\right)\mathrm{d}x\mathrm{d}t\\
	&\leqslant\frac{1}{p}I_R+\frac{1}{p'}\int_0^{\infty}\int_{\mb{R}^n}(\eta_R(t)\phi_R(x))^{-\frac{p'}{p}}\left(\tau^{p'}|\mathrm{d}_t^3\eta_R(t)\phi_R(x)|^{p'}+|\mathrm{d}_t^2\eta_R(t)\phi_R(x)|^{p'}\right)\mathrm{d}x\mathrm{d}t\\
	&\quad+\frac{1}{p'}\int_0^{\infty}\int_{\mb{R}^n}(\eta_R(t)\phi_R(x))^{-\frac{p'}{p}}\left(|\eta_R(t)\Delta\phi_R(x)|^{p'}+\beta^{p'}|\mathrm{d}_t\eta_R(t)\Delta\phi_R(x)|^{p'}\right)\mathrm{d}x\mathrm{d}t,
	\end{align*}
	where we employed Young's inequality $ab\leqslant a^p/p+b^{p'}/p'$.
	
	Due to the fact that
	\begin{align*}
	\Delta\phi_R(x)=R^{-2}\Delta\phi(x/R),\ \ \mbox{and} \ \ \mathrm{d}_t^k\eta_R(t)=R^{-k}\mathrm{d}_t^k\eta(t/R)\ \  \mbox{for}\ \  k=1,2,3,
	\end{align*}
	we are able to deduce
	\begin{align*}
	I_R&\lesssim\frac{1}{p'}I_R+\tau\int_{\mb{R}^n}u_2(x)\phi_R(x)\mathrm{d}x\\
	&\lesssim R^{-2p'}\int_0^{\infty}\int_{\mb{R}^n}\left((\eta(t/R))^{-\frac{p'}{p}}+|\eta''(t/R)|^{p'}\phi(x/R)+\eta(t/R)(\phi(x/R))^{-\frac{p'}{p}}|\Delta\phi(x/R)|^{p'}\right)\mathrm{d}x\mathrm{d}t\\
	&\quad+R^{-3p'}\int_0^{\infty}\int_{\mb{R}^n}(\eta(t/R))^{-\frac{p'}{p}}\left(|\eta'''(t/R)|^{p'}\phi(x/R)+|\eta'(t/R)|^{p'}(\phi(x/R))^{-\frac{p'}{p}}|\Delta\phi(x/R)|^{p'}\right)\mathrm{d}x\mathrm{d}t\\
	&\lesssim R^{-2p'+1+n}+R^{-3p'+1+n}\lesssim R^{-2p'+1+n},
	\end{align*}
	where the conditions for test functions in \eqref{assumption eta phi 1} and \eqref{assumption eta phi 2} were used. Moreover, we applied our assumption on initial data such that
	\begin{align*}
	\tau\int_{\mb{R}^n}u_2(x)\mathrm{d}x>0\ \ \Rightarrow\ \ \tau\int_{\mb{R}^n}u_2(x)\phi_R(x)\mathrm{d}x> 0
	\end{align*}
	for any $R\gg 1$ because of the fact that
	\begin{align*}
	\lim\limits_{R\to\infty}\int_{\mb{R}^n}u_2(x)\phi_R(x)\mathrm{d}x=\int_{\mb{R}^n}u_2(x)\mathrm{d}x.
	\end{align*}

	According to the condition on $p$ leading to $-2p'+1+n<0$, and letting $R\to\infty$, we get $\lim_{R\to\infty}I_R=0$, which leads to $u=0$ a.e., however, this contradicts to our assumption. In other words, the global (in time) weak solution does not exist.
	
	To prove the blow-up result in the limit case when $p=(n+1)/(n-1)$ if $n\geqslant2$, we can also conclude the contradiction that $\lim_{R\to\infty}I_R=0$ by following the approach in \cite{Zhang01}, i.e. the monotone convergence theorem and the dominant convergence theorem. All in all, the proof is completed.
\end{proof}

\section*{Acknowledgments}
The second author was supported in part by Grant-in-Aid for scientific Research (C) 20K03682 of JSPS. The authors thank  Michael Reissig (TU Bergakademie Freiberg) for the suggestions in the preparation of the paper.

\end{document}